\documentclass[11pt]{article}
\usepackage[a4paper,margin=1in]{geometry}
\usepackage{amsmath,amssymb,amsthm,mathtools,bm,mathrsfs}
\usepackage{enumitem}
\usepackage{hyperref}
\usepackage{comment}
\usepackage{microtype}
\usepackage{titlesec}
\usepackage{fancyhdr}
\usepackage{setspace}
\usepackage{lmodern}
\hypersetup{colorlinks=true,linkcolor=blue,citecolor=blue,urlcolor=blue}
\titleformat{\section}{\large\bfseries}{\thesection.}{0.6em}{}
\titleformat{\subsection}{\normalsize\bfseries}{\thesubsection.}{0.5em}{}

\newtheorem{theorem}{Theorem}[section]
\newtheorem{proposition}[theorem]{Proposition}
\newtheorem{lemma}[theorem]{Lemma}
\newtheorem{corollary}[theorem]{Corollary}
\theoremstyle{definition}
\newtheorem{definition}[theorem]{Definition}
\newtheorem{remark}[theorem]{Remark}

\newcommand{\R}{\mathbb{R}}
\newcommand{\C}{\mathbb{C}}
\newcommand{\Z}{\mathbb{Z}}

\newcommand{\Fa}{\mathcal{F}_{\alpha}}
\newcommand{\Ma}{M_{\alpha}}
\newcommand{\Mai}{M_{\alpha}^{-1}}
\newcommand{\sa}{s_{\alpha}}
\newcommand{\ka}{\kappa_{\alpha}}
\newcommand{\BMOa}{\mathrm{BMO}_{\alpha}}
\newcommand{\Hpa}{H^{p}_{\alpha}}
\newcommand{\Avg}{\mathrm{Avg}}
\newcommand{\supp}{\mathrm{supp}}
\newcommand{\norm}[1]{\left\lVert #1 \right\rVert}
\newcommand{\abs}[1]{\left| #1 \right|}

\newcommand{\calS}{\mathcal{S}}

\newcommand{\Tem}{\mathcal{S}'}
\newcommand{\Da}{\mathfrak{D}(\alpha)}

\newcommand{\br}[1]{\left\langle #1 \right\rangle}

\newcommand{\set}[1]{\left\{ #1 \right\}}

\newcommand{\calP}{\mathcal P}
\newcommand{\calM}{\mathcal M}
\newcommand{\calC}{\mathcal C}

\newcommand{\BMO}{\mathrm{BMO}}
\newcommand{\Hp}{H^p}
\newcommand{\Msharp}{M^{\#}}

\newcommand{\Palpha}{\mathcal P_{\alpha}}
\newcommand{\staralpha}{\star_{\alpha}}

\newcommand{\paren}[1]{\left( #1 \right)}

\title{Recent progress of Littlewood-paley Theory with chirp function}
\date{} % 设置日期为空
\author{Xiang Li Qianjun He and Zunwei Fu}

\begin{document}
\maketitle
\begin{abstract}
Littlewood--Paley theory is a fundamental tool for frequency localization, square-function control, and multiplier analysis, yet a systematic counterpart in the fractional Fourier transform (FrFT) setting has remained incomplete. We develop a unified FrFT Littlewood--Paley framework based on the observation that, for a fixed $\alpha\notin\pi\mathbb Z$, a broad class of FrFT-side operators are exact chirp conjugates of their classical Fourier counterparts through
$$
M_{\alpha}f(x)=e^{i\pi |x|^2\cot\alpha}f(x).
$$

Within this unified framework we present: the FrFT multiplier identity; Littlewood--Paley square-function estimates and the converse theorem; sharp dyadic interval decompositions; Marcinkiewicz and Mihlin--H"ormander multiplier results; maximal, rough square-function, and almost-orthogonality estimates; twisted dyadic martingale geometry; inhomogeneous Sobolev, Besov, and Triebel--Lizorkin descriptions; Calder\'on reproducing formulae; pullback spaces and FrFT Riesz--Bessel operators; BMO, Carleson, sharp-maximal, and Hardy-space; twisted product estimates, multilinear bounds, and a Kato--Ponce theorem; fractional order-shifting in Lipschitz spaces; and the classical limit and singular boundary laws for the fractional parameter. The recurring theme is that a large class of FrFT operators are exact chirp conjugates of their classical counterparts, so most estimates are inherited with the same constants after one time identification of the rescaled symbols.
\end{abstract}

\section{Introduction}
\quad

In harmonic analysis, Littlewood–Paley theory constitutes a cornerstone framework that converts precise frequency localization into robust square function estimates. This mechanism furnishes a singular analytic architecture unifying a wide array of central topics, including Fourier multiplier operators, maximal operators, vector-valued inequalities, rough square functions, and dyadic decompositions. Originating from the pioneering work of Littlewood and Paley on dyadic decompositions of Fourier series \cite{Littlewood1931Theorems}, the theory has rapidly evolved from one-dimensional roots to a high-dimensional methodology quantifying the almost orthogonality inherent in frequency bands. At the heart of this machinery lies the Littlewood–Paley g-function and its variants, whose $L^p$ boundedness—originally established via intricate real-variable arguments \cite{Stein1970Topics}—was later elegantly recast as a direct consequence of the vector-valued Calder\'{o}n–Zygmund theory \cite{Stein1993Harmonic}. The ability to handle vector-valued extensions and dimension-free estimates has been central to the theory’s development, with landmark contributions by Bourgain \cite{Bourgain1986Vector} and Rubio de Francia \cite{Rubio1985Dimension} deepening our understanding of such inequalities. Meanwhile, the underlying dyadic structure remains indispensable for analyzing singular integrals with rough kernels and for the systematic study of maximal functions \cite{Edwards1977Littlewood}.

The enduring vitality of Littlewood–Paley theory is further evidenced by its far-reaching applications beyond classical singular integrals: it has become an essential tool in the analysis of partial differential equations, particularly through frequency-localization techniques for nonlocal problems and evolution equations, and its frontiers continue to advance in multilinear settings and on noncommutative spaces \cite{Frazier1991Littlewood,Stein1993Harmonic}. For comprehensive treatments, we refer the reader to the classical monographs and surveys that have shaped the modern theory \cite{Stein1970Topics,Zygmund2002Trigonometric}.

A large class of FrFT operators are chirp-conjugated versions of their classical Fourier counterparts. For a fixed angle $\alpha\in\R\setminus \pi\mathbb Z$, define
$$\Ma f(x):=e^{i\pi |x|^2\cot\alpha}f(x),\qquad\sa:=|\sin\alpha|>0.$$

Since $|\Ma f|=|f|$ pointwise almost everywhere, the operator $\Ma$ is an isometry on every $L^p$ space. This elementary fact underpins the exact transfer of square functions, vector-valued norms, weak-type level sets, and maximal expressions without any loss of constants.

Throughout the paper, $C$ denotes a positive constant independent of the main parameters under consideration, whose value may change from line to line. The notation $A\lesssim B$ means $A\le CB$, and $A\approx B$ means both $A\lesssim B$ and $B\lesssim A$. Unless otherwise stated, all $L^p$ norms are taken over $\R^n$.

\section{Preliminaries and notations}

Throughout this section, let $n\ge 1$ and $\alpha\in \R\setminus \pi\Z$ be fixed. We write
$$
\sa:=|\sin\alpha|>0.
$$

\begin{definition}[\textbf{Fourier transform and chirp multiplier}]\label{lemma-2.1}
For $f\in \mathcal S(\R^n)$, define the Fourier transform and the inverse Fourier transform by
$$
\widehat f(\xi)=\int_{\R^n} e^{-2\pi i x\cdot \xi}f(x)\,dx,
\qquad
f^\vee(x)=\int_{\R^n} e^{2\pi i x\cdot \xi}\widehat f(\xi)\,d\xi.
$$
Define the chirp multiplier $\Ma$ by
$$
(\Ma f)(x):=e^{\pi i |x|^2\cot\alpha}f(x).
$$
\end{definition}

Since $\bigl|e^{\pi i |x|^2\cot\alpha}\bigr|=1$, the operator $\Ma$ is an isometry on $L^p(\R^n)$ for every $0<p\le \infty$, i.e.,
$$
\|\Ma f\|_{L^p}=\|f\|_{L^p}.
$$

\begin{definition}[\textbf{Fractional Fourier transform}]
The fractional Fourier transform at angle $\alpha$ is defined by
\begin{equation}\label{eq:frft-def}
(\Fa f)(\xi)
=
c_{\alpha,n}\,\sa^{-n/2}\,
e^{\pi i |\xi|^2\cot\alpha}
\widehat{\Ma f}\!\left(\frac{\xi}{\sin\alpha}\right),
\end{equation}
where $c_{\alpha,n}$ is a unimodular constant depending only on $(\alpha,n)$.
\end{definition}

The precise value of $c_{\alpha,n}$ is irrelevant for multiplier identities and norm comparisons below.

\begin{definition}[\textbf{Fourier and FrFT multipliers}]\label{definition-2.3}
Let $m$ be a bounded measurable function on $\R^n$. The classical Fourier multiplier with symbol $m$ is defined by
$$
T_m g := (m\,\widehat g)^\vee.
$$
The corresponding FrFT multiplier is defined by
$$
T_{m,\alpha}f:=\Fa^{-1}\bigl(m\,\Fa f\bigr).
$$
\end{definition}

\begin{definition}[\textbf{Localized FrFT spectral family and FrFT Bochner--Riesz means}]
Let $\Phi\in L^\infty(\R^n)$ and $R>0$. Define
$$
S^\Phi_{R,\alpha}f:=\Fa^{-1}\!\bigl(\Phi(\cdot/R)\,\Fa f\bigr).
$$
For $\lambda\ge 0$, define the FrFT Bochner--Riesz means by
$$
B^\lambda_{R,\alpha}f
:=
\Fa^{-1}\!\left(
\left(1-\frac{|\xi|^2}{R^2}\right)_+^\lambda
\Fa f(\xi)
\right).
$$
\end{definition}

\begin{definition}[\textbf{Littlewood--Paley operator}]
Choose $\varphi\in C_c^\infty(\R^n\setminus\{0\})$ supported in
$$
\left\{\xi\in\R^n:\frac12\le |\xi|\le 2\right\}
$$
and satisfying
$$
\sum_{j\in\Z}\varphi(2^{-j}\xi)=1
\qquad (\xi\neq 0).
$$
The homogeneous dyadic pieces are defined by
$$
\Delta_j f := \bigl(\varphi(2^{-j}\cdot)\widehat f\bigr)^\vee.
$$
Choose also $\chi\in C_c^\infty(\R^n)$ supported in $\{|\xi|\le 2\}$ and equal to $1$ on $\{|\xi|\le 1\}$. Define the inhomogeneous low-frequency cutoff by
$$
S_0 f := (\chi\,\widehat f)^\vee.
$$
\end{definition}

For $\gamma>0$, we use the classical Littlewood--Paley characterizations
\begin{align}
\|f\|_{\dot\Lambda^\gamma}
&\asymp
\sup_{j\in\Z} 2^{j\gamma}\|\Delta_j f\|_{L^\infty},
\label{eq:hom-lip}\\
\|f\|_{\Lambda^\gamma}
&\asymp
\|S_0f\|_{L^\infty}
+
\sup_{j\ge 0} 2^{j\gamma}\|\Delta_j f\|_{L^\infty}.
\label{eq:inh-lip}
\end{align}

\begin{definition}[\textbf{Fractional derivatives, Bessel potentials, and twisted operators}]
For $\sigma>0$, define
$$
|D|^\sigma f := \bigl(|\xi|^\sigma \widehat f(\xi)\bigr)^\vee,
\qquad
\br{D}^\sigma f := \bigl((1+|\xi|^2)^{\sigma/2}\widehat f(\xi)\bigr)^\vee.
$$
For later use, define the twisted product
$$
f\star_\alpha g := \Ma^{-1}\bigl((\Ma f)(\Ma g)\bigr)
$$
and the conjugated fractional derivative
$$
\Da^s := \Ma^{-1} D^s \Ma,
$$
where $D^s$ denotes the classical Fourier multiplier with symbol $|\xi|^s$.
\end{definition}

\begin{definition}[\textbf{FrFT BMO spaces, operators, and auxiliary objects}]
\leavevmode
\begin{enumerate}[label=\textup{(\roman*)}]
\item The FrFT BMO space is
$$
\BMOa(\R^n):=\Mai(\BMO(\R^n))
=\set{b\in \calS'(\R^n):\Ma b\in \BMO(\R^n)},
$$
with norm $\norm{b}_{\BMOa}:=\norm{\Ma b}_{\BMO}$.

\item The chirped-constant class is
$$
\calC_{\alpha}:=\Mai \C=
\set{c\,e^{-i\pi |x|^2\cot\alpha}:c\in\C}.
$$
Thus $\BMOa$ is naturally considered modulo $\calC_{\alpha}$.

\item For $0<p\le 1$, define
$$
\Hpa(\R^n):=\Mai(\Hp(\R^n)),
\qquad
\norm{f}_{\Hpa}:=\norm{\Ma f}_{\Hp}.
$$

\item Let $\calP$ be the space of polynomials on $\R^n$. The \textbf{chirped-polynomial class} is
$$
\Palpha:=\Mai\calP
=\set{e^{-i\pi |x|^2\cot\alpha}P(x): P\in\calP}.
$$

\item The FrFT Littlewood-Paley operators are
$$
\Delta_{j,\alpha}:=\Mai\Delta_j\Ma,
\qquad
S_{\alpha}(f):=\paren{\sum_{j\in\Z}\abs{\Delta_{j,\alpha}f}^2}^{1/2}.
$$

\item The FrFT sharp maximal operator is
$$
\Msharp_{\alpha}f:=\Msharp(\Ma f).
$$

\item Given a Schwartz function $\Psi$ with $\int_{\R^n}\Psi=0$, write $\Psi_t(x)=t^{-n}\Psi(x/t)$ and define
$$
\Psi^{\alpha}_t b:=\Mai\bigl(\Psi_t*(\Ma b)\bigr).
$$
The associated measure on the upper half-space is
$$
 d\mu_b^{\alpha}(x,t):=\abs{\Psi_t^{\alpha}b(x)}^2\,dx\,\frac{dt}{t}.
$$

\item For a real order $s$, define the transported derivative
$$
D^s_{\alpha}:=\Mai D^s \Ma,
$$
where $D^s$ is the classical homogeneous derivative $\mathcal F^{-1}\bigl(|\xi|^s\widehat{\cdot}\,\bigr)$.

\item The \textbf{chirpconvolution} is
$$
 f\staralpha g:=\Mai\bigl((\Ma f)*(\Ma g)\bigr).
$$
\end{enumerate}
\end{definition}

\begin{definition}[\textbf{Fractional descriptors and effective bandwidth}]
For $\alpha\in\R\setminus\pi\Z$, define
$$
\ka:=\cot\alpha,
\qquad
\sa:=|\sin\alpha|,
\qquad
D(\alpha):=|1-\sa|+|\ka|.
$$
For a bounded localizing template $\Phi$ and radius $R>0$, define the classical Fourier multiplier
$$
T_{\Phi(\cdot/R)}g:=\big(\Phi(\cdot/R)\,\widehat g\big)^{\vee}.
$$
The effective classical radius is
$$
R_{\mathrm{eff}}(\alpha;R):=\frac{R}{\sa}.
$$
\end{definition}

\section{Some Lemmas}\label{sec:conj}

\begin{lemma}[\textbf{Isometric of the chirp multiplier}]\label{lem:isometry}
For every $1\le p\le \infty$ and every scalar, sequence, or Hilbert-valued function $f$, the chirp multiplier $\Ma$ acts isometrically:
$$\norm{\Ma f}_{L^p}=\norm{f}_{L^p},$$
$\Ma$ is unitary and $\Ma^{-1}=\Ma^\ast$ on $L^2(\R^n)$.
\end{lemma}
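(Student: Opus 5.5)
The plan is to reduce everything to the pointwise identity $|(\Ma f)(x)|=|f(x)|$ for a.e.\ $x$. First I will fix what $\Ma$ means on vector-valued functions. If $f:\R^n\to H$, where $H$ is $\C$, $\ell^2$, or a general Hilbert space, set $(\Ma f)(x)=e^{\pi i|x|^2\cot\alpha}f(x)$. This multiplies the vector $f(x)$ by a unimodular scalar, so $\abs{(\Ma f)(x)}_H=\abs{e^{\pi i|x|^2\cot\alpha}}\,\abs{f(x)}_H=\abs{f(x)}_H$. For a sequence $f=(f_j)$ the multiplier acts componentwise, and the identity holds in each component and hence in the $\ell^q$ norm for any $q$. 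In every case $x\mapsto e^{\pi i |x|^2\cot\alpha}$ is continuous, so $\Ma f$ is (strongly) measurable whenever $f$ is.

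Next comes the $L^p$ statement. For $1\le p<\infty$, integrate $\abs{\Ma f}_H^p=\abs{f}_H^p$ over $\R^n$. For $p=\infty$, the two functions $\abs{\Ma f}_H$ and $\abs f_H$ coincide a.e., so they have the same essential supremum. This gives $\norm{\Ma f}_{L^p}=\norm f_{L^p}$. The same computation with $\cot\alpha$ replaced by $-\cot\alpha$ shows that $\Mai f=e^{-\pi i|x|^2\cot\alpha}f$ is also an isometry, and $\Ma\Mai=\Mai\Ma=I$ pointwise. Hence $\Ma$ is a bijective isometry of each $L^p(\R^n;H)$.

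For unitarity on $L^2$, I will compute the adjoint directly:
$$\br{\Ma f,g}=\int_{\R^n} e^{\pi i|x|^2\cot\alpha}f(x)\overline{g(x)}\,dx=\int_{\R^n} f(x)\overline{e^{-\pi i|x|^2\cot\alpha}g(x)}\,dx=\br{f,\Mai g}.$$
This gives $\Ma^\ast=\Mai$, and together with surjectivity it shows that $\Ma$ is unitary.

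There is no genuine obstacle in this argument. The only point needing care is to state precisely how $\Ma$ acts on sequence- or Hilbert-valued functions: as scalar multiplication in the fibre, not as some operator acting on the index. Once that convention is fixed, measurability and the pointwise modulus identity are immediate.
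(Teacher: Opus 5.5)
Your proposal is correct and follows the paper's argument: both derive everything from the pointwise identity $|\Ma f(x)|=|f(x)|$, which holds because the chirp is unimodular and is applied fibrewise in the vector-valued case, and both identify $\Ma^{-1}=\Ma^{\ast}$ as multiplication by the conjugate phase. Your explicit adjoint computation and your remarks on measurability and on the case $p=\infty$ only spell out details that the paper leaves implicit.
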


\begin{proof}
The pointwise modulus of the chirp factor is one:
$$\left|e^{i\pi |x|^2\cot\alpha}\right|=1.$$

Hence $|\Ma f(x)|=|f(x)|$ almost everywhere. Taking the $L^p$ norm yields the claim for scalar functions, and the same pointwise argument applies componentwise to sequence-valued or Hilbert-valued functions. In $L^2$, the inverse is multiplication by the complex conjugate phase, we have that $\Ma^{-1}=\Ma^\ast$ and $\Ma$ is unitary.
\end{proof}

\begin{lemma}[\textbf{FrFT factorization through chirp modulation}]\label{lem:factor}
For every Schwartz function $f$,
$$
(\Fa f)(u)=A_{\alpha,n}e^{i\pi |u|^{2}\cot\alpha}
\widehat{\Ma f}\!\left(\frac{u}{\sin\alpha}\right),
$$
where the constant $A_{\alpha,n}$ denotes $c_{\alpha,n}\sa^{-n/2}$.
Using the Definition $\ref{lemma-2.1}$, $\Fa f)(u)$ can be written
$$
(\Fa f)(u)=A_{\alpha,n}e^{i\pi |u|^{2}\cot\alpha}
\int_{\R^{n}}e^{-2\pi i x\cdot u/\sin\alpha}e^{i\pi |x|^{2}\cot\alpha}f(x)\,dx.
$$
\end{lemma}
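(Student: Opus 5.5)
This lemma is essentially a restatement of the definition \eqref{eq:frft-def}, so the proof will be a direct unwinding of definitions with no estimates.

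\textbf{Step 1.} Start from \eqref{eq:frft-def} with $\xi$ renamed to $u$. The prefactor $c_{\alpha,n}\sa^{-n/2}$ depends only on $(\alpha,n)$, so we abbreviate it as $A_{\alpha,n}$. This immediately gives the first displayed identity. Since $c_{\alpha,n}$ is unimodular, $|A_{\alpha,n}|=\sa^{-n/2}$, which is finite because $\alpha\notin\pi\Z$.

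\textbf{Step 2.} For $f\in\calS(\R^n)$, note that $|\Ma f|=|f|$ pointwise (Lemma~\ref{lem:isometry}). Hence $\Ma f\in L^1(\R^n)$, and its Fourier transform is given by the absolutely convergent integral of Definition~\ref{lemma-2.1}. We do not need $\Ma f$ to be Schwartz, only integrable, so the integral formula is legitimate. Evaluate this integral at the point $u/\sin\alpha$, which is well defined since $\sin\alpha\neq0$:
$$
\widehat{\Ma f}\Bigl(\frac{u}{\sin\alpha}\Bigr)=\int_{\R^n}e^{-2\pi i x\cdot u/\sin\alpha}\,e^{i\pi|x|^2\cot\alpha}f(x)\,dx .
$$

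\textbf{Step 3.} Substitute the expression from Step 2 into the identity from Step 1. This yields the second display.

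\textbf{Main obstacle.} The only point needing care is justifying the integral representation of $\widehat{\Ma f}$. This is handled by the integrability observation in Step 2. Beyond that there is nothing to prove: the lemma is bookkeeping that records the chirp-conjugation structure for later multiplier identities.
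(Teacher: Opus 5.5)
Your proposal is correct and matches the paper's argument: the paper also treats the lemma as bookkeeping, reading the second display as the standard FrFT kernel with the $u$-dependent chirp factored out and identifying the remaining integral as $\widehat{\Ma f}(u/\sin\alpha)$, which is your Steps 1--3 run in the opposite direction. You do not even need the $L^1$ extension of the Fourier transform, since $\Ma$ preserves $\calS(\R^n)$ by Lemma~\ref{lem:conjprin}, so Definition~\ref{lemma-2.1} applies to $\Ma f$ directly.
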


\begin{proof}
This is the standard kernel formula for the fractional Fourier transform after factoring out the $u$-dependent quadratic phase. The remaining oscillatory integral is precisely the Fourier transform of $\Ma f$ at $u/\sin\alpha$.
\end{proof}

\begin{lemma}[\textbf{Elementary properties of the chirp multiplier}]\label{lem:isom}
For every measurable $f$, every $0<p\leq \infty$, and every cube $Q\subset\R^{n}$, the following three equations hold:
$$
\norm{\Ma f}_{L^{p}}=\norm{f}_{L^{p}},
\qquad
\supp(\Ma f)=\supp(f),
\qquad
\Avg_{Q}|\Ma f|=\Avg_{Q}|f|.
$$
The same pointwise-modulus preservation holds for sequence-valued and Hilbert-valued functions, $\Ma$ is unitary and $\Mai=\Ma^{*}$ on $L^{2}(\R^{n})$.
\end{lemma}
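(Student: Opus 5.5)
The whole lemma follows from the pointwise identity $|\Ma f(x)|=|f(x)|$ for every $x\in\R^n$. This holds because $\bigl|e^{i\pi|x|^2\cot\alpha}\bigr|=1$, since $\cot\alpha$ is real for $\alpha\notin\pi\Z$. I will derive each of the three displayed equalities, and the unitarity statement, directly from this identity, as in Lemma \ref{lem:isometry}. The only difference is that here we also allow $0<p<1$ and merely measurable $f$.

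\emph{The three equalities.}

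For the $L^p$ identity with $0<p<\infty$, I integrate $|\Ma f|^p=|f|^p$ over $\R^n$ and take the $1/p$-th power. For $p=\infty$, the two functions $|\Ma f|$ and $|f|$ have the same essential supremum. No convexity is used, so the range $0<p<1$ causes no difficulty. Both sides may equal $+\infty$, and the identity holds in the extended sense.

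For the support, I note that the chirp factor never vanishes, so $\{\Ma f\neq 0\}=\{f\neq0\}$ as sets. The support is defined either as the closure of this set or, for measurable functions, as the essential support. In either case it depends only on the zero set, up to null sets, so the two supports agree.

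For the averages, I integrate the pointwise identity over $Q$ and divide by $|Q|$, which gives $\Avg_Q|\Ma f|=\Avg_Q|f|$.

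\emph{Vector-valued case and unitarity.}

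For a sequence-valued or Hilbert-valued $f$, $\Ma$ multiplies each component (equivalently, the vector $f(x)$) by a unimodular scalar. Hence $\|\Ma f(x)\|_{\mathcal H}=\|f(x)\|_{\mathcal H}$, and the same three arguments apply verbatim.

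For unitarity on $L^2$, define $\Mai$ as multiplication by $e^{-i\pi|x|^2\cot\alpha}$. It is a two-sided inverse of $\Ma$, so $\Ma$ is a surjective isometry. The identity
$$
\br{\Ma f,g}=\int e^{i\pi|x|^2\cot\alpha}f\,\overline g=\br{f,\Mai g}
$$
shows that $\Ma^*=\Mai$.

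I expect no real obstacle. The only point needing care is to state the conventions: $\|\cdot\|_{L^p}$ is a quasi-norm for $p<1$, and the support is understood in the essential sense for measurable $f$.
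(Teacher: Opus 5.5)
Your proposal is correct and takes the same route as the paper: every assertion is derived from the pointwise identity $|\Ma f(x)|=|f(x)|$, which comes from the unimodularity of the chirp factor. You just write out the details the paper calls immediate, namely the extended-valued $L^p$ identity (including $0<p<1$), the nonvanishing chirp for the support claim, and the adjoint computation giving $\Ma^{*}=\Mai$.
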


\begin{proof}
Since $|e^{i\pi |x|^{2}\cot\alpha}|=1$, we have $|\Ma f(x)|=|f(x)|$ almost everywhere. All assertions follow immediately.
\end{proof}

\begin{proposition}[\textbf{Definition of FrFT multipliers}]\label{prop:frft-mult}
Let $m_\alpha(\xi):=m((\sin\alpha)\xi)$,
then we have that for $1\le p\le \infty$,
$$T_m^\alpha=\Ma^{-1}T_{m_\alpha}\Ma,\quad\text{and}\quad\|T_m^\alpha\|_{L^p\to L^p}=\|T_{m_\alpha}\|_{L^p\to L^p}.$$
\end{proposition}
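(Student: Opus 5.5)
We prove the operator identity first on $\mathcal S(\R^n)$ and then deduce the norm equality from Lemma~\ref{lem:isometry}. Throughout, $T_m^\alpha$ is the FrFT multiplier $T_{m,\alpha}=\Fa^{-1}(m\,\Fa\,\cdot)$ of Definition~\ref{definition-2.3}.

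\textbf{Step 1: factor $\Fa$.} By Lemma~\ref{lem:factor}, $\Fa=E_\alpha\circ D_\alpha\circ\mathcal F\circ\Ma$. Here:
\begin{itemize}
\item $\mathcal F g=\widehat g$;
\item $(D_\alpha h)(u)=A_{\alpha,n}h(u/\sin\alpha)$ is a rescaling with a constant;
\item $E_\alpha$ is multiplication by the chirp $e^{i\pi|u|^2\cot\alpha}$.
\end{itemize}
Each factor is invertible on $\mathcal S(\R^n)$: $E_\alpha^{-1}$ multiplies by the conjugate phase, $D_\alpha^{-1}h(\xi)=A_{\alpha,n}^{-1}h((\sin\alpha)\xi)$, and $\mathcal F^{-1}$ is the inverse Fourier transform. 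Hence
$$
\Fa^{-1}=\Ma^{-1}\mathcal F^{-1}D_\alpha^{-1}E_\alpha^{-1}.
$$
We need not verify that this agrees with some independently defined inverse, because the composition formula itself serves as $\Fa^{-1}$ on the range $\Fa(\mathcal S)$ and on $L^2$.

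\textbf{Step 2: commute the multiplier through the factors.} Multiplication by $m$ commutes with $E_\alpha$, since both are pointwise multiplications. Conjugating by $D_\alpha$ gives
$$
D_\alpha^{-1}\,(m\cdot)\,D_\alpha h(\xi)=m((\sin\alpha)\xi)\,h(\xi)=m_\alpha(\xi)h(\xi),
$$
and the constants $A_{\alpha,n}$ cancel. Therefore
$$
T_m^\alpha f=\Ma^{-1}\mathcal F^{-1}\bigl(m_\alpha\,\widehat{\Ma f}\bigr)=\Ma^{-1}T_{m_\alpha}\Ma f,
$$
first for $f\in\mathcal S$. Since $m$ is bounded, both sides are bounded on $L^2$ by Plancherel, so the identity extends to all of $L^2$.

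\textbf{Step 3: norm equality.} By Lemma~\ref{lem:isometry}, $\Ma$ and $\Ma^{-1}$ are surjective isometries of $L^p$. Hence
$$
\|T_m^\alpha f\|_{L^p}=\|T_{m_\alpha}\Ma f\|_{L^p}\le\|T_{m_\alpha}\|_{L^p\to L^p}\|f\|_{L^p}.
$$
Conversely, writing $T_{m_\alpha}=\Ma T_m^\alpha\Ma^{-1}$ gives the reverse inequality. Both norms are understood as the best constants on the dense class $\mathcal S$ (or $L^2\cap L^p$), including the value $+\infty$ when no such constant exists. For this we use that $\Ma$ preserves $L^2\cap L^p$. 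It does not preserve $\mathcal S$, since multiplying by the chirp destroys Schwartz decay of derivatives. So we run the argument on $L^2\cap L^p$, which is preserved by $\Ma$ and dense for $p<\infty$.

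\textbf{Main obstacle.} The main difficulty is the functional-analytic bookkeeping rather than the algebra: $T_m^\alpha$ must be defined on a class invariant under $\Ma$. The delicate case is $p=\infty$, where $L^2\cap L^\infty$ is not dense. We handle it by duality from $p=1$, using $\Ma^\ast=\overline{\Ma}=\Ma^{-1}$ together with $T_{m_\alpha}^\ast=T_{\overline{m_\alpha}}$. Alternatively, we simply define the $L^\infty$ norm as the best constant on $L^2\cap L^\infty$, in which case the identity transfers verbatim.
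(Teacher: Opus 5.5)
Your proof is correct and is essentially the paper's argument in operator form: the paper substitutes the factorization $\Fa f(u)=c_{\alpha,n}e^{i\pi|u|^2\cot\alpha}\widehat{\Ma f}(u\csc\alpha)$ into $\Fa h=m\,\Fa f$, cancels the common chirp and constant, and sets $\xi=u\csc\alpha$. This is exactly your step of commuting $m$ past $E_\alpha$ and conjugating it through $D_\alpha$, and the paper then uses the same isometry argument for the norms; your handling of domains and of $p=\infty$ is more careful than the paper's one-line treatment. One correction: your remark that $\Ma$ does not preserve $\mathcal S(\R^n)$ is false. Every derivative of $e^{i\pi|x|^2\cot\alpha}$ is a polynomial times the chirp, so $\Ma$ and $\Mai$ are bijections of $\mathcal S$ (Lemma~\ref{lem:conjprin}(1)), and your own Step~1 already relies on this; the error only makes your detour through $L^2\cap L^p$ unnecessary, not wrong.
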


\begin{proof}
Under the calssical Fourier integral, the FrFT operator is
$$(\Fa f)(u)=c_{\alpha,n}\,e^{i\pi |u|^2\cot\alpha}\,\widehat{\Ma f}(u\csc\alpha),$$
where $c_{\alpha,n}\neq 0$ depends only on $(\alpha,n)$ and is irrelevant for boundedness arguments. Let
$$h=T_m^\alpha f.$$
By Definition $\ref{definition-2.3}$, we have
$$\Fa h(u)=m(u)\Fa f(u).$$
Substituting the factorization for both $\Fa h$ and $\Fa f$ yields
$$c_{\alpha,n} e^{i\pi |u|^2\cot\alpha}\widehat{\Ma h}(u\csc\alpha)=
m(u)\,c_{\alpha,n} e^{i\pi |u|^2\cot\alpha}\widehat{\Ma f}(u\csc\alpha).$$
After cancellation of the common nonzero factor,
$$
\widehat{\Ma h}(u\csc\alpha)=m(u)\widehat{\Ma f}(u\csc\alpha).
$$

Now put $\xi=u\csc\alpha$, i.e. $u=(\sin\alpha)\xi.$ Then we have
$$\widehat{\Ma h}(\xi)=m((\sin\alpha)\xi)\widehat{\Ma f}(\xi)=m_\alpha(\xi)\widehat{\Ma f}(\xi).$$
Therefore,
$$\Ma h=T_{m_\alpha}(\Ma f),$$
which is equivalent to
$$T_m^\alpha=\Ma^{-1}T_{m_\alpha}\Ma.$$
Taking operator norms and applying Lemma~\ref{lem:isometry}, we have
$$\|T_m^\alpha\|_{L^p\to L^p}=\|\Ma^{-1}T_{m_\alpha}\Ma\|_{L^p\to L^p}=\|T_{m_\alpha}\|_{L^p\to L^p}.$$
This finishes the proof of Lemma $\ref{prop:frft-mult}$.
\end{proof}

\begin{proposition}[\textbf{General principle for FrFT oscillation-type estimates}]\label{prop:general}
Let $A(g)$ and $B(g)$ be quantities built from pointwise absolute values, distribution functions, local averages, square-function magnitudes, sharp maximal operators, weak-$L^{p}$ norms, mixed $L^{p}(\ell^{r})$ norms, or dual pairings depending only on the modulus. Define
$$
A_{\alpha}(f):=A(\Ma f),\qquad B_{\alpha}(f):=B(\Ma f).
$$
If a classical estimate
$$
A(g)\le C B(g)
$$
holds for all admissible $g$, then
$$
A_{\alpha}(f)\le C B_{\alpha}(f)
$$
holds for all admissible $f$ with the same constant.
\end{proposition}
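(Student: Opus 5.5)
The statement is essentially a substitution principle, so the plan is to make precise what ``admissible'' means and then specialize the classical inequality to $g=\Ma f$. Let $\mathcal A$ be the class of admissible $g$ for the classical estimate. I read the admissible class for the FrFT estimate as $\mathcal A_\alpha:=\Mai\mathcal A=\set{f:\Ma f\in\mathcal A}$. This matches the way the paper defines $\BMOa$ and $\Hpa$ as $\Mai$-images of the classical spaces. With this convention the core of the proof is a single line. Given $f\in\mathcal A_\alpha$, put $g:=\Ma f\in\mathcal A$. The classical estimate then gives
$$
A_\alpha(f)=A(\Ma f)=A(g)\le C\,B(g)=C\,B(\Ma f)=C\,B_\alpha(f),
$$
with the same constant $C$, because no change of variables or rescaling has taken place.

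The substantive part, which I expect to be the main point to justify, is to check that this convention agrees with the natural intrinsic description of the admissible class. In the scalar or vector-valued $L^p$, weak-$L^p$, or mixed-norm settings the condition ``$f$ admissible'' is usually phrased directly in terms of $f$, for instance $f\in L^p$ or $f$ locally integrable. I handle this by going through the listed quantity types and invoking Lemma~\ref{lem:isom}:
\begin{itemize}
\item pointwise moduli, distribution functions $|\{|g|>\lambda\}|$, and weak-$L^p$ quasinorms depend only on $|g|=|f|$;
\item local averages satisfy $\Avg_Q|\Ma f|=\Avg_Q|f|$;
\item mixed $L^p(\ell^r)$ norms are preserved componentwise, as in Lemma~\ref{lem:isometry};
\item pairings depending only on the modulus are likewise unchanged.
\end{itemize}
Hence, for every membership condition expressed through these modulus-only quantities, $f$ is admissible iff $\Ma f$ is, and $\mathcal A_\alpha$ coincides with the intrinsic class.

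For the genuinely oscillatory quantities (square functions built from $\Delta_j$, the sharp maximal operator, Carleson-type measures) no identity with $f$ itself is claimed. These quantities are simply \emph{defined} via $\Ma f$, as in $\Msharp_\alpha f=\Msharp(\Ma f)$, $S_\alpha(f)=(\sum|\Mai\Delta_j\Ma f|^2)^{1/2}=(\sum|\Delta_j\Ma f|^2)^{1/2}$ (using $|\Mai h|=|h|$), and $\Psi^\alpha_t$. The identity $A_\alpha(f)=A(\Ma f)$ therefore holds by definition, and the one-line argument above applies verbatim.

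Finally I would note that $\Ma$ is a bijection on measurable functions and on $\calS'$ (multiplication by a smooth unimodular function of polynomial growth together with its derivatives). It follows that $\Mai\mathcal A$ is well defined and that the estimate holds for \emph{all} $f$ in it, which finishes the proof.
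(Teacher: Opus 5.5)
Your proposal is correct and follows the paper's proof: both set $g=\Ma f$, note that $A_\alpha(f)=A(g)$ and $B_\alpha(f)=B(g)$ by definition, and apply the classical estimate with the same constant $C$. Your bookkeeping of the admissible class as $\Mai\mathcal A$ is a sound refinement, as is your remark that oscillatory quantities such as $\Msharp_\alpha$ coincide with their classical counterparts only when evaluated at $\Ma f$ (not at $f$); the paper's brief appeal to Lemma~\ref{lem:isom} literally covers only the modulus-only building blocks.
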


\begin{proof}
Set $g=\Ma f$. Then $A_{\alpha}(f)=A(g)$ and $B_{\alpha}(f)=B(g)$. The classical estimate gives the FrFT-side estimate immediately. Lemma~\ref{lem:isom} ensures that every building block listed is unchanged by multiplication with the unimodular chirp.
\end{proof}

\begin{proposition}[\textbf{Scalar, vector-valued, square-function, and maximal bounds}]\label{prop:family}
Let $\{S_j\}_{j\in J}$ be a family of operators and define $S_j^\alpha:=\Ma^{-1}S_j\Ma$.

\begin{enumerate}[label=(\roman*),leftmargin=2em]
\item If the following estimates holds uniformly in  $j$,
$$\norm{S_j g}_{L^p}\le A\norm{g}_{L^p},$$
then we have
$$\norm{S_j^\alpha f}_{L^p}\le A\norm{f}_{L^p}.$$

\item If $S$ is weak type $(1,1)$,
$$|\{x:|Sg(x)|>\lambda\}|\le \frac{A}{\lambda}\norm{g}_{L^1},$$
then $S^\alpha$ is weak type $(1,1)$ with the same constant.

\item If the following estimates holds,
$$\left\|\left(\sum_{j\in J}|S_j g_j|^2\right)^{1/2}\right\|_{L^p}\le A\left\|\left(\sum_{j\in J}|g_j|^2\right)^{1/2}\right\|_{L^p},$$
then we have
$$\left\|\left(\sum_{j\in J}|S_j^\alpha f_j|^2\right)^{1/2}\right\|_{L^p}\le A\left\|\left(\sum_{j\in J}|f_j|^2\right)^{1/2}\right\|_{L^p}.$$

\item If the following estimates holds,
$$\left\|\sup_{j\in J}|S_j g|\right\|_{L^p}\le A\norm{g}_{L^p},$$
then we have
$$\left\|\sup_{j\in J}|S_j^\alpha f|\right\|_{L^p}\le A\norm{f}_{L^p}.$$

\item If the following estimates holds,
$$\left\|\left(\int_0^\infty |S_t g|^2\,d\mu(t)\right)^{1/2}\right\|_{L^p}\le A\norm{g}_{L^p},$$
then we have
$$\left\|\left(\int_0^\infty |S_t^\alpha f|^2\,d\mu(t)\right)^{1/2}\right\|_{L^p}\le A\norm{f}_{L^p}.$$
\end{enumerate}
\end{proposition}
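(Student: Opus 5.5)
The plan is to reduce every item to the pointwise identity $|\Ma h(x)|=|h(x)|$ from Lemma~\ref{lem:isom}, applied once to the input and once to the output of the conjugated operator. Concretely, for $f$ in the admissible class we set $g:=\Ma f$ (and, in (iii), $g_j:=\Ma f_j$). Since $S_j^\alpha f=\Mai S_j g$ and $\Mai$ is also multiplication by a unimodular function, we get
$$
|S_j^\alpha f(x)|=|S_j g(x)|\qquad\text{for a.e. }x,\ \text{for every } j\in J.
$$
The analogous identity holds for the continuous family $S_t^\alpha$ in (v). We will check once that $\Mai$ preserves the admissible domain, for instance Schwartz or $L^p$ functions. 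This holds because multiplication by the smooth unimodular chirp preserves $L^p$, and it maps $\calS$ to $\calS$ since all derivatives of $e^{\pm i\pi|x|^2\cot\alpha}$ grow only polynomially.

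With this identity, each part becomes the substitution argument of Proposition~\ref{prop:general}.

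For (i), we write $\norm{S_j^\alpha f}_{L^p}=\norm{S_j g}_{L^p}\le A\norm{g}_{L^p}=A\norm{f}_{L^p}$, using Lemma~\ref{lem:isometry}.

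For (ii), the level sets $\{|S^\alpha f|>\lambda\}$ and $\{|Sg|>\lambda\}$ coincide up to a null set, and $\norm{g}_{L^1}=\norm{f}_{L^1}$.

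For (iii), the pointwise identities give equality of the square functions $\bigl(\sum_j|S_j^\alpha f_j|^2\bigr)^{1/2}=\bigl(\sum_j|S_jg_j|^2\bigr)^{1/2}$ a.e. Likewise $\bigl(\sum_j|f_j|^2\bigr)^{1/2}=\bigl(\sum_j|g_j|^2\bigr)^{1/2}$, because the same unimodular factor multiplies every component. This is the sequence-valued case of Lemma~\ref{lem:isometry}.

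For (iv), taking the supremum over $j$ of the pointwise identities gives $\sup_j|S^\alpha_j f|=\sup_j|S_jg|$ a.e.

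For (v), we integrate the identity $|S_t^\alpha f(x)|^2=|S_tg(x)|^2$ in $d\mu(t)$.

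The only genuine obstacle is measure-theoretic, and it arises in (iv) and (v). There, "a.e. for each $j$" must be upgraded to "a.e. simultaneously". For countable $J$ this is immediate, since a countable union of null sets is null.

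For an uncountable family, such as $t\in(0,\infty)$ in (v), we avoid the issue altogether. The chirp relation $S_t^\alpha f(x)=\overline{e^{i\pi|x|^2\cot\alpha}}\,S_tg(x)$ holds pointwise for every $x$ at which $S_tg(x)$ is defined. This is a multiplication by a fixed function of $x$, not an identity holding only a.e. So joint measurability in $(x,t)$ of $S_t g$ transfers to $S^\alpha_t f$, and the equality of the integrands holds everywhere on the common domain. Fubini then yields equality of the $t$-integrals for every $x$.

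Finally, we note that the constant $A$ is unchanged in every part. No step uses anything beyond $|\Ma h|=|h|$, and in particular no property of $\alpha$ other than $\alpha\notin\pi\Z$, which is needed for $\cot\alpha$ to be defined.
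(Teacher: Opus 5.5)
Your proposal is correct and follows the paper's proof essentially line for line: set $g=\Ma f$ (or $g_j=\Ma f_j$), use unimodularity of the chirp to get $|S_j^\alpha f|=|S_j(\Ma f)|$ pointwise, and conclude each item from the $L^p$-isometry of $\Ma$ with the same constant $A$. Your additional remark is a sensible refinement that the paper leaves implicit: once representatives are fixed, the chirp identity holds for every $x$, so suprema and $d\mu(t)$-integrals over uncountable index sets in (iv) and (v) cause no trouble.
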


\begin{proof}
The proof is the same in all cases and relies only on the unimodularity of the chirp factor. 

For $(i)$,
$$|S_j^\alpha f(x)|=\left|\Ma^{-1}S_j(\Ma f)(x)\right|=|S_j(\Ma f)(x)|.$$
Thus, we have
$$\norm{S_j^\alpha f}_{L^p}=\norm{S_j(\Ma f)}_{L^p}\le A\norm{\Ma f}_{L^p}=A\norm{f}_{L^p}.$$
For $(ii)$, the level sets agree,
$$\{x:|S^\alpha f(x)|>\lambda\}=\{x:|S(\Ma f)(x)|>\lambda\},$$
so the weak-type estimate transfers by Lemma~\ref{lem:isometry}. 

For $(iii)$, we have
$$\left(\sum_j |S_j^\alpha f_j|^2\right)^{1/2}=\left(\sum_j |S_j(\Ma f_j)|^2\right)^{1/2},$$
and the estimate follows by applying the assumed vector-valued bound to $g_j=\Ma f_j$. 

For $(iv)$,
$$\sup_j |S_j^\alpha f|=\sup_j |S_j(\Ma f)|,
$$
and again Lemma~\ref{lem:isometry} finishes the proof. The continuous square-function statement $(v)$ is identical:
$$\left(\int_0^\infty |S_t^\alpha f|^2\,d\mu(t)\right)^{1/2}=\left(\int_0^\infty |S_t(\Ma f)|^2\,d\mu(t)\right)^{1/2}.$$
This completes the  proof of Lemma $\ref{prop:family}$.
\end{proof}

\begin{lemma}[\textbf{Uniqueness of the low-frequency remainder}]\label{lem:poly}
Let $1\le p<\infty$, suppose two polynomials $Q_1,Q_2$ satisfy
$$f-\Ma^{-1}Q_1\in L^p(\R^n),\qquad f-\Ma^{-1}Q_2\in L^p(\R^n).$$
Then, we have $Q_1=Q_2$.
\end{lemma}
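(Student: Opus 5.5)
Subtracting the two hypotheses removes $f$ and leaves a statement about one chirped polynomial. Since $L^p(\R^n)$ is a vector space,
$$
\bigl(f-\Ma^{-1}Q_2\bigr)-\bigl(f-\Ma^{-1}Q_1\bigr)=\Ma^{-1}(Q_1-Q_2)\in L^p(\R^n).
$$
Put $P:=Q_1-Q_2\in\calP$. It suffices to show that a polynomial $P$ with $\Ma^{-1}P\in L^p(\R^n)$ and $1\le p<\infty$ must vanish identically.

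The chirp factor has modulus one. By Lemma~\ref{lem:isom}, applied with $\Ma^{-1}$, which is multiplication by the unimodular function $e^{-i\pi|x|^2\cot\alpha}$, we have
$$
\norm{\Ma^{-1}P}_{L^p}=\norm{P}_{L^p}.
$$
Hence $P\in L^p(\R^n)$. The chirp plays no further role, and the lemma reduces to the classical fact that the only polynomial in $L^p(\R^n)$ with $p<\infty$ is the zero polynomial.

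To prove that fact, suppose $P\not\equiv 0$. The main obstacle is that in several variables $|P|$ need not grow in every direction; for example $P(x)=x_1$ stays bounded along hyperplanes. So one cannot argue with radial growth, and I would instead argue via a positive-measure set on which $|P|$ is bounded below, repeated at infinitely many disjoint places.

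\begin{itemize}
\item Since $P\not\equiv 0$ and $P$ is continuous, there exist $x_0\in\R^n$, $r>0$ and $c>0$ with $|P|\ge c$ on the ball $B(x_0,r)$.
\item If $P$ is a nonzero constant, then $|P|^p$ is a positive constant and is not integrable on $\R^n$, so we are done.
\item Otherwise $\deg P=d\ge1$. Choose a direction $v$ in which the top homogeneous part $P_d$ satisfies $P_d(v)\neq0$. For $x$ in the ball $B(0,1)$ and $t>0$, write $P(x+tv)=P_d(v)\,t^d+O(t^{d-1})$, where the implied constant is uniform for $|x|\le1$. Then for large $t$ we get $|P(x+tv)|\ge \tfrac12|P_d(v)|\,t^d\ge1$ on the whole unit ball $B(tv,1)$.
\item Take infinitely many disjoint balls $B(t_kv,1)$ with $t_k\to\infty$. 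Each contributes at least $|B(0,1)|$ to $\int_{\R^n}|P|^p$, so $\int_{\R^n}|P|^p=\infty$.
\end{itemize}

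This contradicts $P\in L^p(\R^n)$. Therefore $P\equiv0$, that is, $Q_1=Q_2$.

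The only step needing care is the uniform lower bound on unit balls translated along $v$. It follows because the lower-order terms of $P(x+tv)$, viewed as a polynomial in $t$ with coefficients continuous in $x$, are bounded by $C(1+t)^{d-1}$ uniformly for $|x|\le1$.
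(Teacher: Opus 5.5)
Your proof is correct and follows the paper's route: subtract the two relations, remove the unimodular chirp using the $L^p$ isometry, and reduce to the fact that no nonzero polynomial lies in $L^p(\R^n)$ for $p<\infty$. The one difference is that you prove this last fact, using disjoint unit balls translated along a direction $v$ with $P_d(v)\neq 0$, whereas the paper only asserts it with a one-line growth remark.
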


\begin{proof}
Subtracting the two relations gives
$$
\Ma^{-1}(Q_1-Q_2)\in L^p(\R^n).
$$
Multiplying by $\Ma$ and using Lemma~\ref{lem:isometry},$$Q_1-Q_2\in L^p(\R^n).
$$
A nonzero polynomial cannot belong to $L^p(\R^n)$ for any finite $p$, because its growth at infinity is at least polynomial on a set of positive measure. Hence $Q_1-Q_2\equiv 0$.
\end{proof}

\begin{lemma}[\textbf{General conjugation principle}]\label{lem:conjprin}
Let
$$\Ma f(x):=e^{i\pi |x|^2\cot\alpha}f(x),\qquad\alpha\notin \pi\mathbb Z.$$
For any linear operator $T$, define its FrFT-conjugated operator by
$$T^\alpha:=\Ma^{-1}T\Ma.$$
Then the following hold:
\begin{enumerate}[leftmargin=2em,label=(\arabic*)]
\item $\Ma$ and $\Ma^{-1}$ preserve $\mathcal S(\R^n)$ and $\mathcal S'(\R^n)$;
\item for every $1\le p\le\infty$, $\Ma$ is an isometry on $L^p(\R^n)$, namely
$$\|\Ma f\|_{L^p(\R^n)}=\|f\|_{L^p(\R^n)};$$
\item for every family $\{T_j\}$, if $T_j^\alpha=\Ma^{-1}T_j\Ma$, then for every $f$,
$$T_j^\alpha f=\Ma^{-1}\bigl(T_j(\Ma f)\bigr)\quad\text{and}\quad|T_j^\alpha f|=|T_j(\Ma f)|.$$
Consequently, every estimate formulated in terms of pointwise absolute values, inner $\ell^2$ or more general $\ell^r$ norms, outer $L^p$ norms, weak-type distribution functions, or duality pairings transfers from the calssical Fourier setting to its FrFT-conjugated operator with the same constant.
\end{enumerate}
\end{lemma}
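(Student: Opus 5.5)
The plan is to prove the three items of Lemma~\ref{lem:conjprin} in order, then deduce the transfer statement by setting $g=\Ma f$, as in Proposition~\ref{prop:general} and Proposition~\ref{prop:family}. All the work sits in item (1). Items (2) and (3) and the final consequence reduce to the pointwise identity $|\Ma h|=|h|$.

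\textbf{Item (1): preservation of $\mathcal S$ and $\mathcal S'$.} Write $\Ma f=\phi f$ with $\phi(x)=e^{i\pi\kappa_\alpha|x|^2}$, where $\kappa_\alpha=\cot\alpha$. I first claim that $\phi$ is a smooth function of moderate growth: each derivative satisfies
$$|\partial^\beta\phi(x)|\le C_\beta(1+|x|)^{|\beta|}.$$
This follows by induction on $|\beta|$, since $\partial^\beta\phi=P_\beta(x)\phi(x)$ for a polynomial $P_\beta$ of degree at most $|\beta|$ (each differentiation either hits $\phi$, producing the factor $2\pi i\kappa_\alpha x_k$, or lowers the degree of $P_\beta$).

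Next I apply the Leibniz rule to $x^\gamma\partial^\beta(\phi f)$. Each resulting term is bounded by a finite combination of Schwartz seminorms $\sup_x|x^{\gamma'}\partial^{\beta'}f(x)|$ with $|\gamma'|\le|\gamma|+|\beta|$. Hence $\Ma$ maps $\mathcal S$ continuously into $\mathcal S$. The same argument with $\bar\phi$ shows that $\Ma^{-1}$ does too, so $\Ma$ is a topological isomorphism of $\mathcal S$.

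For $\mathcal S'$, I define $\langle \Ma u,\psi\rangle:=\langle u,\phi\psi\rangle$ (bilinear pairing). This is the transpose of a continuous map on $\mathcal S$, so it is well defined on $\mathcal S'$. It agrees with pointwise multiplication when $u$ is a function, and $\Ma^{-1}$ is its two-sided inverse.

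\textbf{Item (2).} This is Lemma~\ref{lem:isometry}, or directly $|\Ma f|=|f|$ almost everywhere.

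\textbf{Item (3).} The identity $T_j^\alpha f=\Ma^{-1}(T_j(\Ma f))$ is just the definition of composition. Since $\Ma^{-1}$ is multiplication by the unimodular $\bar\phi$, we get $|T_j^\alpha f|=|T_j(\Ma f)|$ pointwise.

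\textbf{The consequence.} Every listed quantity depends only on the moduli $|T_j(\cdot)|$ together with the input norms:
\begin{itemize}
\item inner $\ell^r$ sums and suprema;
\item outer $L^p$ or weak-$L^p$ norms, via distribution functions.
\end{itemize}
Moreover the inputs satisfy $\|(\sum_j|\Ma f_j|^r)^{1/r}\|_{L^p}=\|(\sum_j|f_j|^r)^{1/r}\|_{L^p}$. So applying the classical estimate to $g_j=\Ma f_j$ gives the FrFT estimate with the same constant, exactly as in the proof of Proposition~\ref{prop:family}.

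For duality pairings I intend to use the sesquilinear pairing $\int F\bar G$. Then $\int \Ma F\,\overline{\Ma G}=\int F\bar G$, so such pairings are invariant. I would state that this is the relevant pairing; for bilinear pairings one conjugates only one side.

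\textbf{Main obstacle.} The only step needing care is item (1): getting the polynomial-growth bound on $\partial^\beta\phi$ and confirming that the $\mathcal S'$ extension is consistent. I would handle this by noting that $\phi$ lies in the multiplier space $\mathcal O_M$, and invoking the standard fact that $\mathcal O_M$ functions act continuously on both $\mathcal S$ and $\mathcal S'$.
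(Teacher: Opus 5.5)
Your proposal is correct and follows essentially the same route as the paper. The paper also argues that every derivative of the chirp is a polynomial times the chirp, so multiplication by it preserves $\mathcal S$, and $\mathcal S'$ by duality. It then uses unimodularity to get both the $L^p$ isometry and the pointwise identity $|T_j^\alpha f|=|T_j(\Ma f)|$, from which every listed estimate transfers by taking $g=\Ma f$.

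Your additions make explicit what the paper leaves implicit: the Leibniz seminorm bound, the transpose definition on $\mathcal S'$ together with its inverse, and the remark that the invariant pairing is the sesquilinear one $\int F\bar G$.
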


\begin{proof}
The operator $\Ma$ is simply multiplication by the unimodular smooth chirp
$e^{i\pi |x|^2\cot\alpha}.$
Every derivative of this factor is of the form ``polynomial $\times$ chirp'', hence multiplication by it preserves Schwartz functions; by duality, both $\Ma$ and $\Ma^{-1}$ preserve tempered distributions as well.

Since$$|e^{i\pi |x|^2\cot\alpha}|=1,$$
for every $1\le p\le\infty$, we have
$$\|\Ma f\|_{L^p(\R^n)}=\|f\|_{L^p(\R^n)},\qquad\|\Ma^{-1}f\|_{L^p(\R^n)}=\|f\|_{L^p(\R^n)}.$$
Finally, by definition, we have
$$T_j^\alpha f=\Ma^{-1}T_j(\Ma f),$$
and therefore
$$|T_j^\alpha f|=|T_j(\Ma f)|.$$

Hence every square function, vector-valued expression, weak-type level set
$$\{x:\mathfrak S_\alpha(f)(x)>\lambda\},$$
or dual form built from these pointwise magnitudes is identical to the corresponding classical Fourier expression applied to $\Ma f$. This proves the claim.
\end{proof}

\begin{lemma}[\textbf{Uniqueness of chirp-modulated polynomials}]\label{lem:chirp-poly}
Let $1\le p<\infty$. If $Q$ is a polynomial and
$$e^{-i\pi |x|^2\cot\alpha}Q(x)\in L^p(\R^n),$$
then $Q\equiv 0$.
\end{lemma}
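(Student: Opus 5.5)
The plan is to reduce Lemma~\ref{lem:chirp-poly} to the statement that a nonzero polynomial is never in $L^p(\R^n)$ for $p<\infty$. This is the same fact used in Lemma~\ref{lem:poly}. The reduction rests on the unimodularity of the chirp factor. Since $e^{-i\pi|x|^2\cot\alpha}Q(x)=(\Ma^{-1}Q)(x)$ and $|e^{-i\pi|x|^2\cot\alpha}|=1$, we have $|\Ma^{-1}Q(x)|=|Q(x)|$ pointwise. By Lemma~\ref{lem:isom} (or Lemma~\ref{lem:conjprin}(2)), the hypothesis is therefore equivalent to $Q\in L^p(\R^n)$, with $\|Q\|_{L^p}=\|\Ma^{-1}Q\|_{L^p}<\infty$.

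It remains to show that $Q\in L^p$ forces $Q\equiv0$, and I would make this step rigorous rather than appeal to ``polynomial growth.'' Suppose $Q\not\equiv0$. Since $|Q|^p$ is continuous, nonnegative, and positive somewhere, it is bounded below on some small ball $B(x_0,r)$. The idea is to obtain infinitely many disjoint translates of that ball on which $|Q|$ stays bounded below.

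The cleanest route I would take is by degree. If $\deg Q=0$, then $Q=c\neq0$ and $\int|c|^p=\infty$. If $\deg Q=d\ge1$, I would pick a unit direction $\omega$ such that the top homogeneous part satisfies $Q_d(\omega)\ne0$; such $\omega$ exists because $Q_d\not\equiv0$. By continuity, $|Q_d(\theta)|\ge c_0>0$ for $\theta$ in a small cap $\Sigma$ of the sphere around $\omega$. For $x=t\theta$ with $\theta\in\Sigma$, one has $Q(x)=t^dQ_d(\theta)+O(t^{d-1})$, uniformly in $\theta$. Hence $|Q(x)|\ge \tfrac{c_0}{2}t^d\ge \tfrac{c_0}{2}$ for $t\ge T_0$. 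Integrating in polar coordinates over the cone $\{t\theta:\theta\in\Sigma,\ t\ge T_0\}$ then gives
$$\int|Q|^p\ge \left(\tfrac{c_0}{2}\right)^p|\Sigma|\int_{T_0}^\infty t^{n-1}\,dt=\infty,$$
a contradiction.

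The main obstacle, modest as it is, is the uniformity of the lower-order error on the cap. I would handle it by bounding the coefficients of $Q-Q_d$, which gives $|Q(t\theta)-t^dQ_d(\theta)|\le C(1+t)^{d-1}$ for all $|\theta|=1$; choosing $T_0$ large then absorbs this error. Everything else is the isometry already recorded in the paper.
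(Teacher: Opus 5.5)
Your proof is correct and follows the paper's route: unimodularity of the chirp gives $|e^{-i\pi|x|^2\cot\alpha}Q(x)|=|Q(x)|$, so the hypothesis reduces to $Q\in L^p(\R^n)$, and a nonzero polynomial cannot lie in $L^p$ for $p<\infty$. The only difference is that you actually prove this last fact, using the top homogeneous part $Q_d$ bounded below on a spherical cap to force a divergent integral over a cone, whereas the paper simply asserts it (and you should take $T_0\ge 1$ so that $\tfrac{c_0}{2}t^d\ge\tfrac{c_0}{2}$ holds).
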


\begin{proof}
Indeed,
$$\bigl|e^{-i\pi |x|^2\cot\alpha}Q(x)\bigr|=|Q(x)|.$$
Hence, it is follows that $Q\in L^p(\R^n)$. A nonzero polynomial cannot belong to $L^p(\R^n)$, so necessarily $Q=0$.
\end{proof}

\section{The FrFT Littlewood-Paley theorem}

\begin{theorem}[FrFT Littlewood--Paley theorem]\label{thm:FrFTL-P}
\begin{enumerate}[leftmargin=2em,label=\textnormal{(\alph*)}]

Assume that $\Psi$ is an integrable $C^1$ function with mean zero,
$$\int_{\R^n}\Psi(x)\,dx=0,$$
and satisfying
\begin{equation}\label{eq:MVZ}
|\Psi(x)|+|\nabla\Psi(x)|\le B(1+|x|)^{-n-1}.
\end{equation}
Then there exists a constants $C_n$ such that for all $1<p<\infty$ and all $f\in L^p(\R^n)$ we have
$$\|S_\alpha(f)\|_{L^p(\R^n)}\le C_n\,B\,\max\{p,(p-1)^{-1}\}\,\|f\|_{L^p(\R^n)}.$$
Moreover, the weak type $(1,1)$ estimate holds:
$$\|S_\alpha(f)\|_{L^{1,\infty}(\R^n)}\le C_n' B\,\|f\|_{L^1(\R^n)}.$$

Conversely, $\Psi$ be a Schwartz function such that either $\widehat{\Psi}(0)=0$ and
$$\sum_{j\in\mathbb Z}|\widehat{\Psi}(2^{-j}\xi)|^2=1\quad for~all~\xi\in\R^n\backslash\{0\},$$
or $\widehat{\Psi}$ is compactly supported away from the originand
$$\sum_{j\in\mathbb Z}\widehat{\Psi}(2^{-j}\xi)=1\quad for~all~\xi\in\R^n\backslash\{0\},$$
then there is a constant $C_{n, \Psi}$, such that for any $f \in \mathcal S'(\R^n)$ with $S_\alpha(f)\in L^p(\R^n)$ in $L^p\left(\R^n\right)$ for some $1<p<\infty$, there exists a unique polynomial $Q$ such that
$$f(x)-e^{-i\pi |x|^2\cot\alpha}Q(x)\in L^p(\R^n),$$
and we have
$$\bigl\|f-e^{-i\pi |x|^2\cot\alpha}Q\bigr\|_{L^p(\R^n)}\le C_{n,\Psi}\,B\,\max\{p,(p-1)^{-1}\}\,\|S_\alpha(f)\|_{L^p(\R^n)}.$$
In particular, if $f\in L^p(\R^n)$ for some $1<p<\infty$, then
$$\|f\|_{L^p(\R^n)}\approx \|S_\alpha(f)\|_{L^p(\R^n)}.$$
\end{enumerate}
\end{theorem}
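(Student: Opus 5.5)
The plan is to reduce everything to the classical Littlewood--Paley theorem for $g=\Ma f$ through the conjugation principle (Lemma~\ref{lem:conjprin}, Proposition~\ref{prop:family}). In the theorem, $S_\alpha$ should be read with $\Delta_{j,\alpha}=\Mai\Delta_j^\Psi\Ma$, where $\Delta_j^\Psi g=\Psi_{2^{-j}}*g$ is the classical Littlewood--Paley piece attached to $\Psi$. For the direct estimate, the first step is the pointwise identity
$$
S_\alpha(f)(x)=\Bigl(\sum_{j}|\Mai\Delta_j^\Psi\Ma f(x)|^2\Bigr)^{1/2}=\Bigl(\sum_j|\Delta_j^\Psi(\Ma f)(x)|^2\Bigr)^{1/2}=S(\Ma f)(x),
$$
which holds because $\Mai$ is multiplication by a unimodular factor.

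Next I would invoke the classical theorem, in the form of Grafakos's Theorem 6.1.2. It says that under the mean-zero and decay hypothesis \eqref{eq:MVZ}, the operator $g\mapsto\{\Delta_j^\Psi g\}_j$ is a vector-valued Calder\'on--Zygmund operator from $\C$ to $\ell^2$. Its kernel $\{\Psi_{2^{-j}}(x-y)\}_j$ satisfies $\|K(x)\|_{\ell^2}\lesssim B|x|^{-n}$ and the H\"ormander condition with constant $\lesssim B$. It is $L^2$ bounded because $\sum_j|\widehat\Psi(2^{-j}\xi)|^2\lesssim B^2$, which follows from $|\widehat\Psi(\xi)|\lesssim B\min(|\xi|,|\xi|^{-1})$. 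Calder\'on--Zygmund theory then gives $\|S(g)\|_{L^p}\le C_nB\max\{p,(p-1)^{-1}\}\|g\|_{L^p}$ and the weak $(1,1)$ bound. Applying these with $g=\Ma f$ and using $\|\Ma f\|_{L^p}=\|f\|_{L^p}$ from Lemma~\ref{lem:isometry}, together with Proposition~\ref{prop:family}(ii),(iii), gives both direct inequalities with the same constants.

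For the converse, set $g=\Ma f\in\calS'$, which is legitimate by Lemma~\ref{lem:conjprin}(1); then $S(g)=S_\alpha(f)\in L^p$. The classical converse theorem (Grafakos, Theorem 6.1.2(c)/Corollary for distributions) then yields a polynomial $Q$ with $g-Q\in L^p$ and $\|g-Q\|_{L^p}\le C_{n,\Psi}\max\{p,(p-1)^{-1}\}\|S(g)\|_{L^p}$. Its proof is by duality. One pairs $g$ against $h\in\calS$ with $\widehat h$ supported away from $0$, and uses the reproducing identity $h=\sum_j\widetilde\Delta_j\Delta_j h$. This identity comes from $\sum|\widehat\Psi(2^{-j}\cdot)|^2=1$, or from $\sum\widehat\Psi(2^{-j}\cdot)=1$ with an auxiliary $\widetilde\Psi$. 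Cauchy--Schwarz in $j$, H\"older, and the direct estimate for the dual square function then give
$$
|\langle g,h\rangle|\lesssim\|S(g)\|_{L^p}\|h\|_{L^{p'}}.
$$
So $g$ defines a bounded functional on a dense subspace of $L^{p'}$, and is represented modulo distributions with Fourier support at the origin, that is, modulo polynomials.

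Multiplying back by $\Mai$, we have $f-\Mai Q=\Mai(g-Q)$, and the $L^p$ norms coincide. Uniqueness of $Q$ is exactly Lemma~\ref{lem:poly} (equivalently Lemma~\ref{lem:chirp-poly}). For the final equivalence, suppose $f\in L^p$. Then $\Mai Q=f-(f-\Mai Q)\in L^p$, so $Q=0$ by Lemma~\ref{lem:chirp-poly}. Combined with the direct estimate, this gives $\|f\|_{L^p}\approx\|S_\alpha(f)\|_{L^p}$.

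The main obstacle is the distributional converse, and specifically the step identifying the obstruction to $g\in L^p$ as a polynomial. The ambiguity is the space of distributions whose Fourier transform is supported at the origin, so one must check that the functional constructed on $\{h:\widehat h\in C_c^\infty(\R^n\setminus\{0\})\}$ extends and agrees with $g$ modulo $\calP$. This is purely classical and is handled in the classical setting. The only FrFT-specific point is that the chirp maps $\calS'$ to $\calS'$, which is Lemma~\ref{lem:conjprin}(1), and that the exceptional class becomes $\Palpha$ rather than $\calP$. I would also note that the factor $B$ in the converse constant is spurious once $\Psi$ is fixed, and would absorb it into $C_{n,\Psi}$.
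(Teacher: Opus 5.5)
Your proposal is correct and is essentially the paper's proof. You set $g=\Ma f$, use the pointwise identity $S_\alpha(f)=S(g)$ and the $L^p$-isometry (equivalently, coincidence of level sets) to inherit the classical strong-type and weak-$(1,1)$ bounds, and for the converse apply the classical distributional theorem to $g\in\calS'$, pull back by $\Mai$, and obtain uniqueness from Lemma~\ref{lem:poly} (or Lemma~\ref{lem:chirp-poly}).

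Your extra argument that $Q=0$ when $f\in L^p$ correctly supplies the ``in particular'' equivalence, which the paper leaves implicit. The only slip is in your optional recap of the classical $L^2$ bound: the decay $(1+|x|)^{-n-1}$ gives only $|\widehat\Psi(\xi)|\lesssim B|\xi|\log(2+|\xi|^{-1})$ near the origin, not $B|\xi|$. This still suffices for $\sum_j|\widehat\Psi(2^{-j}\xi)|^2\lesssim B^2$.
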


\begin{proof}
The proof relies on Lemma~\ref{lem:conjprin}.

\textbf{Step 1: Square-function estimates and weak type $(1,1)$.} Let $ g:=\Ma f.$
Then, by definition we have
$$\Delta_j^\alpha f=\Ma^{-1}\Delta_j(\Ma f)=\Ma^{-1}\Delta_j g.$$
Hence, we obtain
$$|\Delta_j^\alpha f|=|\Delta_j g|,$$
and therefore
$$S_\alpha(f)=\left(\sum_{j\in\mathbb Z}|\Delta_j^\alpha f|^2\right)^{1/2}=\left(\sum_{j\in\mathbb Z}|\Delta_j g|^2\right)^{1/2}=:S(g).
$$
The classical Littlewood–Paley theorem immediately gives
$$\|S_\alpha(f)\|_{L^p(\R^n)}=\|S(g)\|_{L^p(\R^n)}
\le C_nB\max\{p,(p-1)^{-1}\}\|g\|_{L^p(\R^n)}.$$

Since $\|g\|_{L^p(\R^n)}=\|\Ma f\|_{L^p(\R^n)}=\|f\|_{L^p(\R^n)},$ the desired inequality follows that
$$\|S_\alpha(f)\|_{L^p(\R^n)}\le C_nB\max\{p,(p-1)^{-1}\}\|f\|_{L^p(\R^n)}.$$

The weak type $(1,1)$ bound holds because level sets coincide: for every $\lambda>0$,
$$\{x:S_\alpha(f)(x)>\lambda\}=\{x:S(g)(x)>\lambda\}.$$
Hence, we obtain
$$|\{x:S_\alpha(f)(x)>\lambda\}|=|\{x:S(g)(x)>\lambda\}|.$$
By the weak-type estimate, we have
$$|\{x:S(g)(x)>\lambda\}|\le \frac{C_n'B}{\lambda}\|g\|_{L^1(\R^n)}=\frac{C_n'B}{\lambda}\|f\|_{L^1(\R^n)},$$
which is equivalent to
$$\|S_\alpha(f)\|_{L^{1,\infty}(\R^n)}\le C_n'B\|f\|_{L^1(\R^n)}.$$

\textbf{Step 2: Converse estimate.} Assume that
$S_\alpha(f)\in L^p(\R^n).$ Since $S_\alpha(f)=S(g)$, the classical Littlewood--Paley theorem yields a unique polynomial $Q$ such that
$$g-Q\in L^p(\R^n),$$
with
$$\|g-Q\|_{L^p(\R^n)} \le C_{n,\Psi}B\max\{p,(p-1)^{-1}\}\|S(g)\|_{L^p(\R^n)}.$$
Applying $\Ma^{-1}$ to both sides gives
$$f-\Ma^{-1}Q\in L^p(\R^n),$$
and
$$\|f-\Ma^{-1}Q\|_{L^p(\R^n)}=\|g-Q\|_{L^p(\R^n)}\le C_{n,\Psi}B\max\{p,(p-1)^{-1}\}\|S_\alpha(f)\|_{L^p(\R^n)}.$$
Since $\Ma^{-1}Q(x)=e^{-i\pi |x|^2\cot\alpha}Q(x),$ we have
$$f-e^{-i\pi |x|^2\cot\alpha}Q\in L^p(\R^n).$$
Uniqueness follows from Lemma $\ref{lem:chirp-poly}$.
\end{proof}

\begin{proposition}[\textbf{Vactor-valued FrFT Littlewood--Paley theorem}]

Let $\Psi$ be an integrable $\mathcal C^1$ function on $\R^n$ with mean value zero that satisfies \eqref{eq:MVZ} and let $\Delta_j$ be the Littlewood-Paley operator associated with $\Psi$. Then there exists a constant $C_n<\infty$ such that for all $1<p, r<\infty$ and all sequences of $L^p$ functions $f_j$ we have
$$
\left\|\left(\sum_{j\in\mathbb Z}\left(\sum_{k\in\mathbb Z}|\Delta_k^\alpha(f_j)|^2\right)^{r/2}\right)^{1/r}\right\|_{L^p(\R^n)}
\le C_nB\,C_{p,r}^{\sharp}\left\|\left(\sum_{j\in\mathbb Z}|f_j|^r\right)^{1/r}\right\|_{L^p(\R^n)},$$
where
$$C_{p,r}^{\sharp}:=\max\{p,(p-1)^{-1}\}\max\{r,(r-1)^{-1}\}.$$

A corresponding weak-type version also holds. In particular,
$$\left\|\left(\sum_{j\in\mathbb Z}|\Delta_j^\alpha(f_j)|^r\right)^{1/r}\right\|_{L^p(\R^n)}\le C_nB\,C_{p,r}^{\sharp}\left\|
\left(\sum_{j\in\mathbb Z}|f_j|^r\right)^{1/r}\right\|_{L^p(\R^n)}.$$

\end{proposition}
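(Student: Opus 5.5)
The plan is to reduce the statement to the classical vector-valued Littlewood--Paley inequality by chirp conjugation, exactly as in Step 1 of the proof of Theorem~\ref{thm:FrFTL-P}. Given the sequence $\{f_j\}$, set $g_j:=\Ma f_j$. By Lemma~\ref{lem:conjprin}(3), for every $j,k$,
$$|\Delta_k^\alpha f_j|=|\Ma^{-1}\Delta_k(\Ma f_j)|=|\Delta_k g_j|$$
pointwise. Hence the inner $\ell^2$ quantities coincide, and so do the mixed $\ell^r(\ell^2)$ expressions:
$$\Bigl(\sum_{j}\Bigl(\sum_k|\Delta_k^\alpha f_j|^2\Bigr)^{r/2}\Bigr)^{1/r}=\Bigl(\sum_{j}\Bigl(\sum_k|\Delta_k g_j|^2\Bigr)^{r/2}\Bigr)^{1/r}.$$
On the right-hand side, Lemma~\ref{lem:isom} gives $|g_j|=|f_j|$ pointwise, so
$$\Bigl(\sum_j|g_j|^r\Bigr)^{1/r}=\Bigl(\sum_j|f_j|^r\Bigr)^{1/r}.$$

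Next I would invoke the classical vector-valued Littlewood--Paley theorem for $\Psi$ satisfying \eqref{eq:MVZ}. One realizes $g\mapsto\{\Delta_k g\}_k$ as a Calder\'on--Zygmund operator from $\C$ to $\ell^2$. Its kernel $\{\Psi_{2^{-k}}(x-y)\}_k$ has $\ell^2$-size bounded by $CB|x-y|^{-n}$ and $\ell^2$-H\"ormander smoothness bounded by $CB$, both consequences of \eqref{eq:MVZ}. The $L^2$ bound follows from the mean zero condition together with the decay of $\widehat\Psi$. The classical vector-valued extension of Calder\'on--Zygmund operators, via Marcinkiewicz--Zygmund or the $\ell^r$-valued kernel argument, then gives
$$\Bigl\|\Bigl(\sum_j\|\{\Delta_k g_j\}_k\|_{\ell^2}^r\Bigr)^{1/r}\Bigr\|_{L^p}\le C_nB\,C^\sharp_{p,r}\Bigl\|\Bigl(\sum_j|g_j|^r\Bigr)^{1/r}\Bigr\|_{L^p},$$
together with the corresponding weak $(1,1)$ endpoint for $\ell^r$-valued functions. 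Substituting the identities of the first paragraph yields the main estimate with the same constant.

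The weak-type version transfers the same way. The distribution functions of both sides coincide by Proposition~\ref{prop:general}, since the level sets of the conjugated and classical expressions are identical.

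For the \emph{in particular} statement I would use pointwise domination, $|\Delta_j^\alpha f_j|\le(\sum_k|\Delta_k^\alpha f_j|^2)^{1/2}$. This gives
$$\Bigl(\sum_j|\Delta_j^\alpha f_j|^r\Bigr)^{1/r}\le\Bigl(\sum_j\Bigl(\sum_k|\Delta_k^\alpha f_j|^2\Bigr)^{r/2}\Bigr)^{1/r},$$
and the main inequality applies directly.

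The only genuine obstacle is the constant $C^\sharp_{p,r}$ in the classical input. The $\ell^r(\ell^2)$-valued Calder\'on--Zygmund theory must be tracked so that the dependence is exactly $\max\{p,(p-1)^{-1}\}\max\{r,(r-1)^{-1}\}$. I would obtain it in two stages. First, treat $p=r$ by Fubini from the scalar $\ell^2$-valued bound, which gives a factor $\max\{r,(r-1)^{-1}\}$. Second, run the $\ell^r(\ell^2)$-valued Calder\'on--Zygmund decomposition: this gives weak $(1,1)$ with constant $\lesssim B\max\{r,(r-1)^{-1}\}$, then Marcinkiewicz interpolation and duality in $p$ supply the remaining factor. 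The chirp transfer itself costs nothing.
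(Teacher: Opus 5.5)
Your proposal is correct and follows the paper's route. Like the paper, you conjugate by $M_{\alpha}$ to identify the mixed $\ell^r(\ell^2)$ expressions pointwise with the classical ones for $g_j=M_{\alpha}f_j$, then invoke the classical vector-valued Littlewood--Paley inequality. The paper simply cites Grafakos, Proposition 6.1.4, for that input, which is the $\ell^r(\ell^2)$-valued Calder\'on--Zygmund argument you sketch; your pointwise domination $|\Delta_j^\alpha f_j|\le\bigl(\sum_k|\Delta_k^\alpha f_j|^2\bigr)^{1/2}$ for the ``in particular'' bound is a harmless detail the paper leaves implicit.
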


\begin{proof}

Again let $g_j=\Ma f_j$. Then for every $j$, $k$,
$$\Delta_k^\alpha(f_j)=\Ma^{-1}\Delta_k(g_j),\qquad|\Delta_k^\alpha(f_j)|=|\Delta_k(g_j)|.$$
Hence, we have
$$\left(\sum_{j\in\mathbb Z}\left(\sum_{k\in\mathbb Z}|\Delta_k^\alpha(f_j)|^2\right)^{r/2}\right)^{1/r}=\left(\sum_{j\in\mathbb Z}
\left(\sum_{k\in\mathbb Z}|\Delta_k(g_j)|^2\right)^{r/2}\right)^{1/r}.$$
Applying Proposition 6.1.4 in \cite{grafakos-classical} to the right-hand side gives
$$\left\|\left(\sum_j\left(\sum_k|\Delta_k^\alpha(f_j)|^2\right)^{r/2}\right)^{1/r}\right\|_{L^p}\le
C_nB\,C_{p,r}^{\sharp}\left\|\left(\sum_j |g_j|^r\right)^{1/r}\right\|_{L^p}.$$
Since $|g_j|=|f_j|$, the desired estimate follows. The weak-type version is identical.
\end{proof}

\begin{definition}
For $j \in \mathbb Z$ we introduce the one-dimensional operator
$$\Delta_j^{\#}(f)(x)=\left(\widehat{f} \chi_{I_j}\right)^{\vee}(x),
$$
where
$$
I_j=\left[2^j, 2^{j+1}\right) \cup\left(-2^{j+1},-2^j\right],
$$
and $\Delta_j^{\#}$ is a version of the operator $\Delta_j$ in which the characteristic function of the set $2^j \leq|\xi|<2^{j+1}$ replaces the function $\widehat{\Psi}\left(2^{-j} \xi\right)$.
\end{definition}

\begin{theorem}[\textbf{Sharp dyadic interval and rectangle decompositions}]\label{theorem2}

In one dimension, we define
$$\Delta_j^{\#,\alpha}:=\Ma^{-1}\Delta_j^\#\Ma.$$
Then there exists a constant $C_1$, such that for every  $1<p<\infty$ and every $f\in L^p(\R)$, we have
$$\frac{1}{C_1\bigl(p+(p-1)^{-1}\bigr)^2}\|f\|_{L^p(\R)}\le\left\|
\left(\sum_{j\in\mathbb Z}|\Delta_j^{\#,\alpha}f|^2\right)^{1/2}\right\|_{L^p(\R)}\le C_1\bigl(p+(p-1)^{-1}\bigr)^2\|f\|_{L^p(\R)}.$$

\end{theorem}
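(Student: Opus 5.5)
The plan is to reduce the statement to the classical sharp (rough) Littlewood--Paley theorem on $\R$ through the conjugation principle of Lemma~\ref{lem:conjprin}, as in the proof of Theorem~\ref{thm:FrFTL-P}. Fix $f\in L^p(\R)$ and put $g:=\Ma f$, so $g\in L^p(\R)$ and $\|g\|_{L^p}=\|f\|_{L^p}$ by Lemma~\ref{lem:isometry}. By definition,
$$\Delta_j^{\#,\alpha}f=\Ma^{-1}\Delta_j^\#(\Ma f)=\Ma^{-1}\Delta_j^\# g,$$
and since $\Ma^{-1}$ is multiplication by a unimodular function, $|\Delta_j^{\#,\alpha}f|=|\Delta_j^\# g|$ pointwise for every $j$. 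Hence
$$\Bigl(\sum_{j\in\Z}|\Delta_j^{\#,\alpha}f|^2\Bigr)^{1/2}=\Bigl(\sum_{j\in\Z}|\Delta_j^{\#}g|^2\Bigr)^{1/2},$$
and the two $L^p$ norms coincide. This is Proposition~\ref{prop:family}(iii) applied with the single input $g$ repeated.

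The second step is to invoke the classical one-dimensional theorem for the rough dyadic pieces $\Delta_j^\#$ (Grafakos, Classical Fourier Analysis, Theorem 6.1.6, or the Marcinkiewicz/Littlewood--Paley theorem for dyadic intervals). It states that
$$C_1^{-1}(p+(p-1)^{-1})^{-2}\|g\|_{L^p}\le\Bigl\|\Bigl(\sum_j|\Delta_j^\# g|^2\Bigr)^{1/2}\Bigr\|_{L^p}\le C_1(p+(p-1)^{-1})^2\|g\|_{L^p}.$$
Substituting the identity from the first step together with $\|g\|_{L^p}=\|f\|_{L^p}$ gives both inequalities with the same constant $C_1$. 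No polynomial ambiguity arises, because $f$ is assumed to lie in $L^p$ from the start.

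The only real content, and the step to check carefully, is that the classical rough estimate applies to $g$ and holds with exactly the stated $p$-dependence. The classical proof writes $\chi_{I_j}$ as a combination of modulated Hilbert transforms, $\chi_{[a,b)}(\xi)=\tfrac{i}{2}(e^{2\pi i a x}He^{-2\pi i a x}-e^{2\pi i b x}He^{-2\pi i b x})$. The upper bound then follows from a smooth Littlewood--Paley square function, costing a factor $\max\{p,(p-1)^{-1}\}$, combined with the vector-valued Hilbert transform bound, costing a second such factor. This produces the square $(p+(p-1)^{-1})^2$. The lower bound follows by duality from the upper bound at the conjugate exponent $p'$, using $L^2$ orthogonality, i.e. $\sum_j\chi_{I_j}=1$ almost everywhere. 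Since all of this is stated and proved for arbitrary $g\in L^p(\R)$, and $\Ma$ preserves $L^p$, I will simply cite it rather than redo it.
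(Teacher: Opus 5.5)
Your proposal is correct and is the paper's proof. Both set $g=\Ma f$, identify the two square functions pointwise via $|\Delta_j^{\#,\alpha}f|=|\Delta_j^\# g|$, apply the classical sharp dyadic-interval Littlewood--Paley theorem on $\R$ (the paper cites it as Theorem~6.1.5 in Grafakos, you as 6.1.6), and conclude with $\|g\|_{L^p}=\|f\|_{L^p}$. Your sketch of why the classical theorem carries the factor $(p+(p-1)^{-1})^2$ is accurate but not needed for the argument.
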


\begin{proof}
Let $\Delta_j^\#$ denote a version of the operator $\Delta_j$ in which the characteristic function of the set $2^j \leq|\xi|<2^{j+1}$ replaces the function $\widehat{\Psi}\left(2^{-j} \xi\right)$, and define
$$\Delta_j^{\#,\alpha}:=\Ma^{-1}\Delta_j^\#\Ma.$$
Set $g=\Ma f$. Then we have
$$\Delta_j^{\#,\alpha}f=\Ma^{-1}\Delta_j^\# g,\qquad|\Delta_j^{\#,\alpha}f|=|\Delta_j^\# g|.$$
Thus, we have
$$\left(\sum_j |\Delta_j^{\#,\alpha}f|^2\right)^{1/2}=\left(\sum_j |\Delta_j^\# g|^2\right)^{1/2}.$$
Applying the Theorem~6.1.5 in \cite{grafakos-classical}, we obtain
$$\frac{1}{C_1(p+(p-1)^{-1})^2}\|g\|_{L^p(\R)}\le\left\|\left(\sum_j |\Delta_j^\# g|^2\right)^{1/2}\right\|_{L^p(\R)}\le C_1(p+(p-1)^{-1})^2\|g\|_{L^p(\R)}.
$$
Since $\|g\|_{L^p(\R)}=\|f\|_{L^p(\R)}$, this proves the FrFT version.
\end{proof}

\section{Fractional multiplier theory}
The classical multiplier theorems can extend to FrFT multiplier theorems once the rescaled symbol keeps the same assumptions.

\begin{lemma}[\textbf{Invariance of the one-dimensional Marcinkiewicz condition under a family of dilations}]\label{lem:marc1}
Let $m_\alpha(\xi)=m((\sin\alpha)\xi)$. If
$$\sup_{j\in\mathbb Z}\left(\int_{\sa2^j}^{\sa2^{j+1}} |m'(u)|\,du+\int_{-\sa2^{j+1}}^{-\sa2^j} |m'(u)|\,du\right)\le A,$$
then $m_\alpha$ satisfies the classical one-dimensional Marcinkiewicz condition with the same constant.
\end{lemma}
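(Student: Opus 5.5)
The plan is a direct change of variables on each dyadic piece of the Marcinkiewicz condition for $m_\alpha$. The classical one-dimensional condition asks for $\|m_\alpha\|_{L^\infty}<\infty$ (which is immediate, since $m_\alpha$ is a dilation of $m$) together with
$$
\sup_{j\in\Z}\left(\int_{2^j}^{2^{j+1}}|m_\alpha'(\xi)|\,d\xi+\int_{-2^{j+1}}^{-2^j}|m_\alpha'(\xi)|\,d\xi\right)\le A .
$$
First, I will compute $m_\alpha'(\xi)=(\sin\alpha)\,m'((\sin\alpha)\xi)$. This is valid wherever $m$ is differentiable, or in the sense of absolutely continuous functions if $m$ is only of bounded variation locally; in the latter case $|m'|\,du$ is read as the total variation measure, and the same argument applies to it.

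Second, I will substitute $u=(\sin\alpha)\xi$ in each integral, so that $du=(\sin\alpha)\,d\xi$ and $|\sin\alpha|\,d\xi=|du|$. This gives
$$
\int_{2^j}^{2^{j+1}}|m_\alpha'(\xi)|\,d\xi=\int_{J}|m'(u)|\,du ,
$$
where $J$ is the image interval. I will then split into two cases according to the sign of $\sin\alpha$.

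If $\sin\alpha>0$, the interval $[2^j,2^{j+1}]$ maps to $[\sa2^j,\sa2^{j+1}]$ and the negative interval maps to $[-\sa2^{j+1},-\sa2^j]$. The sum of the two integrals is then exactly the $j$-th quantity in the hypothesis.

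If $\sin\alpha<0$, the substitution reverses orientation: the positive interval maps onto $[-\sa2^{j+1},-\sa2^j]$ and the negative interval maps onto $[\sa2^j,\sa2^{j+1}]$. The two pieces are simply swapped, and their sum is unchanged. This symmetric pairing in the hypothesis is the reason both half-lines appear together there, and it is the only point needing care. In both cases, taking the supremum over $j$ yields the bound $A$.

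The main obstacle is essentially nil. The only subtlety is bookkeeping the orientation when $\sin\alpha<0$, together with noting that the hypothesis is stated on the dilated intervals $\sa[2^j,2^{j+1}]$ rather than the standard dyadic ones. The hypothesis is therefore exactly the pullback of the classical condition, and no comparison between non-aligned dyadic families (which would cost a factor of $2$) is needed.
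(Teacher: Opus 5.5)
Your proof is correct and follows the paper's argument: the chain rule $m_\alpha'(\xi)=(\sin\alpha)\,m'((\sin\alpha)\xi)$, then the substitution $u=(\sin\alpha)\xi$ on each dyadic piece, with the orientation reversal for $\sin\alpha<0$ absorbed by the absolute value. In that case your bookkeeping is more accurate than the paper's wording: the positive and negative pieces are swapped, so only their sum is preserved, whereas the paper's phrasing suggests each half-axis integral is individually unchanged.
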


\begin{proof}
By the chain rule,
$$m_\alpha'(\xi)=(\sin\alpha)m'((\sin\alpha)\xi).$$
Hence, we have
$$\int_{2^j}^{2^{j+1}} |m_\alpha'(\xi)|\,d\xi=\int_{2^j}^{2^{j+1}} |(\sin\alpha)m'((\sin\alpha)\xi)|\,d\xi.$$
Set $u=(\sin\alpha)\xi$. If $\sin\alpha>0$, this becomes
$$\int_{\sa2^j}^{\sa2^{j+1}} |m'(u)|\,du.$$
If $\sin\alpha<0$, the substitution reverses orientation, but the absolute value turns the integral into the same quantity. The negative dyadic half-axis is identical. Therefore the total variation on every dyadic interval is preserved.
\end{proof}

\begin{lemma}[\textbf{Invariance of derivative and Mihlin–Hörmander conditions under a family of dilations}]\label{lem:mihlin}
Let $m_\alpha(\xi)=m((\sin\alpha)\xi)$ on $\R^n$.

\begin{enumerate}[leftmargin=2em,label=(\roman*)]
\item If 
$$|\partial_u^\beta m(u)|\le C_\beta |u|^{-|\beta|},$$
then we have
$$|\partial_\xi^\beta m_\alpha(\xi)|\le C_{\beta,\alpha}|\xi|^{-|\beta|}.$$

\item If
$$\left(\int_{R<|u|<2R} |\partial_u^\beta m(u)|^2\,du\right)^{1/2}\le A R^{\frac n2-|\beta|},$$
then we have
$$\left(\int_{R<|\xi|<2R} |\partial_\xi^\beta m_\alpha(\xi)|^2\,d\xi\right)^{1/2}
\le C_{\alpha,\beta} A R^{\frac n2-|\beta|}.$$
\end{enumerate}
\end{lemma}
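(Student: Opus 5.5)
The whole argument rests on the chain rule for the dilation $\xi\mapsto(\sin\alpha)\xi$, followed by a change of variables. Write $s:=\sin\alpha$, so that $|s|=\sa>0$ and $m_\alpha(\xi)=m(s\xi)$. For every multi-index $\beta$, induction on $|\beta|$ gives
$$\partial_\xi^\beta m_\alpha(\xi)=s^{|\beta|}\,(\partial_u^\beta m)(s\xi).$$
This holds in the classical sense if $m$ is smooth away from the origin. If $m$ is only weakly differentiable, as part (ii) allows, it holds in the distributional sense: pair with a test function and substitute $u=s\xi$. Both parts then follow from this identity by routine substitutions.

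\textbf{Part (i).} We have $|\partial_\xi^\beta m_\alpha(\xi)|=\sa^{|\beta|}|(\partial^\beta m)(s\xi)|$. The hypothesis evaluated at $u=s\xi$ gives $|(\partial^\beta m)(s\xi)|\le C_\beta |s\xi|^{-|\beta|}=C_\beta\,\sa^{-|\beta|}|\xi|^{-|\beta|}$. The powers of $\sa$ cancel, so
$$|\partial_\xi^\beta m_\alpha(\xi)|\le C_\beta|\xi|^{-|\beta|}.$$
The statement is therefore true with $C_{\beta,\alpha}=C_\beta$, independent of $\alpha$, as expected from the scale invariance of the Mihlin condition.

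\textbf{Part (ii).} Squaring and using the identity,
$$\int_{R<|\xi|<2R}|\partial_\xi^\beta m_\alpha(\xi)|^2\,d\xi=\sa^{2|\beta|}\int_{R<|\xi|<2R}|(\partial^\beta m)(s\xi)|^2\,d\xi.$$
Substitute $u=s\xi$. The Jacobian is $|\det(sI)|^{-1}=\sa^{-n}$; orientation reversal when $s<0$ is harmless because we integrate absolute values. The region becomes the annulus $\{\sa R<|u|<2\sa R\}$. Applying the hypothesis with radius $R'=\sa R$ bounds the right-hand side by
$$\sa^{2|\beta|-n}A^2(\sa R)^{n-2|\beta|}=A^2R^{n-2|\beta|}.$$
Taking square roots gives the claim, again with $C_{\alpha,\beta}=1$.

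The main point to watch, mild as it is, is that the hypothesis must be applied at the rescaled radius $\sa R$ rather than at $R$. This is legitimate only because the hypothesis is assumed for every $R>0$, so the substituted annulus is still one of the annuli covered. The second point is the sign of $\sin\alpha$. In part (ii) it disappears because the Jacobian enters as $|\det|$; in part (i) it disappears because only $|s|^{|\beta|}$ appears after taking absolute values. No finite-range issue arises, since all conditions are homogeneous under dilation. The resulting statements are exactly what is needed to feed the Mihlin--H\"ormander theorem into Proposition~\ref{prop:frft-mult}.
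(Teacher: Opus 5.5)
Your proof is correct and follows the same route as the paper's. The paper also uses the chain rule $\partial_\xi^\beta m_\alpha(\xi)=(\sin\alpha)^{|\beta|}(\partial_u^\beta m)((\sin\alpha)\xi)$, cancels the powers of $\sa$ in (i), and in (ii) substitutes $u=(\sin\alpha)\xi$ and applies the hypothesis at radius $\sa R$, obtaining the same $\alpha$-independent constants; your remark on the distributional chain rule for weakly differentiable $m$ is a valid extra detail the paper does not spell out.
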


\begin{proof}
For $(i)$, repeated chain differentiation gives
$$\partial_\xi^\beta m_\alpha(\xi)=(\sin\alpha)^{|\beta|}(\partial_u^\beta m)((\sin\alpha)\xi).$$
Hence, we have
$$|\partial_\xi^\beta m_\alpha(\xi)|\le|\sin\alpha|^{|\beta|} C_\beta |(\sin\alpha)\xi|^{-|\beta|}=C_\beta |\xi|^{-|\beta|}.$$

For $(ii)$, the same chain rule together with the change of variables $u=(\sin\alpha)\xi$ yields
\begin{align*}
\int_{R<|\xi|<2R} |\partial_\xi^\beta m_\alpha(\xi)|^2\,d\xi
&=
|\sin\alpha|^{2|\beta|}
\int_{R<|\xi|<2R} |(\partial_u^\beta m)((\sin\alpha)\xi)|^2\,d\xi \\
&=
|\sin\alpha|^{2|\beta|-n}
\int_{\sa R<|u|<2\sa R} |\partial_u^\beta m(u)|^2\,du.
\end{align*}
Taking square roots,
$$\left(\int_{R<|\xi|<2R} |\partial_\xi^\beta m_\alpha(\xi)|^2\,d\xi\right)^{1/2}
=|\sin\alpha|^{|\beta|-\frac n2}\left(\int_{\sa R<|u|<2\sa R} |\partial_u^\beta m(u)|^2\,du\right)^{1/2}.
$$
Applying the assumed bound at radius $\sa R$ gives
$$\left(\int_{R<|\xi|<2R} |\partial_\xi^\beta m_\alpha(\xi)|^2\,d\xi\right)^{1/2}\le|\sin\alpha|^{|\beta|-\frac n2}A(\sa R)^{\frac n2-|\beta|}=A R^{\frac n2-|\beta|}.$$
This finishes the proof of Lemma $\ref{lem:mihlin}$.
\end{proof}

Using the dyadic decomposition of $\R^n$, we can write any $L^{\infty}$ function $m$ as the sum
\begin{equation}\label{mj}
m_{\mathbf{j}}=\sum_{\mathbf{j} \in \mathbb{Z}^n} m \chi_{R_{\mathbf{j}}} \quad \text { a.e. },
\end{equation}
where $\mathbf{j}=\left(j_1, \ldots, j_n\right), R_{\mathbf{j}}=I_{j_1} \times \cdots \times I_{j_n}$, and $I_k=\left[2^k, 2^{k+1}\right) \bigcup\left(-2^{k+1},-2^k\right]$. For $\mathbf{j}$ in $\mathbb{Z}^n$ we set $m_{\mathbf{j}}=m \chi_{R_{\mathbf{j}}}$. A consequence of the ideas developed so far is the following characterization of $\mathcal M_p\left(\R^n\right)$ in terms of a vector-valued inequality.

\begin{theorem}[\textbf{Fractional vector-valued inequality}]\label{thm:frftvalue}
Let $m_{\mathbf j}$ be the operators defined in \eqref{mj}, then $T_m^\alpha$ is bounded on $L^p(\R^n)$ if and only if there exists a constant
$C_p>0$ such that for every family $\{f_{\mathbf j}\}_{\mathbf j\in\mathbb Z^n}$, we have
$$\left\|\left(\sum_{\mathbf j\in\mathbb Z^n}|T_{m_{\mathbf j}}^\alpha(f_{\mathbf j})|^2
\right)^{1/2}\right\|_{L^p(\R^n)}\le C_p\left\|\left(\sum_{\mathbf j\in\mathbb Z^n}|f_{\mathbf j}|^2\right)^{1/2}\right\|_{L^p(\R^n)}.$$
\end{theorem}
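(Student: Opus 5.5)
The plan is to reduce the statement to its classical counterpart, Grafakos' characterization of $\mathcal M_p(\R^n)$ via the vector-valued inequality for the rectangle pieces $m\chi_{R_{\mathbf j}}$ (Theorem~6.1.6 in \cite{grafakos-classical}). The reduction uses Proposition~\ref{prop:frft-mult} together with the conjugation principle of Lemma~\ref{lem:conjprin}.

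The first step is to identify the operators. By Proposition~\ref{prop:frft-mult}, $T_m^\alpha=\Ma^{-1}T_{m_\alpha}\Ma$ with $m_\alpha(\xi)=m((\sin\alpha)\xi)$. In the same way, $T^\alpha_{m_{\mathbf j}}=\Ma^{-1}T_{(m_{\mathbf j})_\alpha}\Ma$, where
$$(m_{\mathbf j})_\alpha(\xi)=m((\sin\alpha)\xi)\,\chi_{R_{\mathbf j}}((\sin\alpha)\xi)=m_\alpha(\xi)\,\chi_{(\sin\alpha)^{-1}R_{\mathbf j}}(\xi).$$
Since $R_{\mathbf j}$ is symmetric in each coordinate, $(\sin\alpha)^{-1}R_{\mathbf j}=\sa^{-1}R_{\mathbf j}$. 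This is a dyadic rectangle decomposition dilated by the fixed factor $\sa^{-1}$.

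The second step is to transfer the norms. Put $g_{\mathbf j}=\Ma f_{\mathbf j}$. Lemma~\ref{lem:conjprin}(3) gives $|T^\alpha_{m_{\mathbf j}}f_{\mathbf j}|=|T_{(m_{\mathbf j})_\alpha}g_{\mathbf j}|$ and $|f_{\mathbf j}|=|g_{\mathbf j}|$ pointwise. Because $\Ma$ is a bijection on families, the stated FrFT inequality is equivalent, with the same constant, to the classical inequality
$$\Big\|\Big(\sum_{\mathbf j}|T_{m_\alpha\chi_{\sa^{-1}R_{\mathbf j}}}g_{\mathbf j}|^2\Big)^{1/2}\Big\|_{L^p}\le C_p\Big\|\Big(\sum_{\mathbf j}|g_{\mathbf j}|^2\Big)^{1/2}\Big\|_{L^p}.$$
Likewise, $T_m^\alpha$ is bounded on $L^p$ if and only if $T_{m_\alpha}$ is, with equal norms.

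The third step, which is the main obstacle, handles the dilation $\sa^{-1}$. Grafakos' theorem is stated for the exact dyadic rectangles $R_{\mathbf j}$, not for their dilates. I would remove the dilation with the $L^p$-isometric dilation $D_\lambda h(x)=\lambda^{n/p}h(\lambda x)$. The identity $D_\lambda T_\mu D_\lambda^{-1}=T_{\mu(\cdot/\lambda)}$ turns the symbol $m_\alpha\chi_{\sa^{-1}R_{\mathbf j}}$ into $m\,\chi_{R_{\mathbf j}}$ up to the sign of $\sin\alpha$, which is harmless since it is a reflection $x\mapsto -x$ and also an isometry. With $\lambda=\sa$ we get $m_\alpha(\xi/\lambda)=m(\pm\xi)$, which matches exactly.

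Since dilations commute with the pointwise $\ell^2$ sums and preserve mixed $L^p(\ell^2)$ norms, the displayed inequality is equivalent to the same inequality for the pieces $m\chi_{R_{\mathbf j}}$ (or its reflection). Likewise, $\|T_{m_\alpha}\|_{L^p\to L^p}=\|T_m\|_{L^p\to L^p}$. The classical theorem then gives both directions of the equivalence. Alternatively, one could rerun Grafakos' proof using the dilated sharp Littlewood--Paley decomposition, since Theorem~\ref{theorem2} is dilation-invariant, but the dilation conjugation is cleaner.
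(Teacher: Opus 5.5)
Your proposal is correct and uses the same reduction as the paper. That reduction is the multiplier identity $T^\alpha_{m_{\mathbf j}}=\Ma^{-1}T_{(m_{\mathbf j})_\alpha}\Ma$, then transfer of the mixed $L^p(\ell^2)$ norms via $g_{\mathbf j}=\Ma f_{\mathbf j}$, then the classical characterization of $\mathcal M_p(\R^n)$. Note that this characterization is Proposition~6.2.1 in \cite{grafakos-classical}, as the paper cites; Theorem~6.1.6 is the sharp rectangle Littlewood--Paley theorem.

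Your third step adds something the paper's proof omits. The paper applies the classical characterization directly to the pieces $(m_{\mathbf j})_\alpha=m_\alpha\chi_{\sa^{-1}R_{\mathbf j}}$ without noting that these are not the dyadic pieces $m_\alpha\chi_{R_{\mathbf j}}$ of $m_\alpha$. Your conjugation by the $L^p$-isometric dilation $D_{\sa}$, together with a reflection when $\sin\alpha<0$, reduces the problem exactly to the classical statement for $m$ itself, and so closes that gap.
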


\begin{proof}
The proof has two parts. Firstly, we reduce the FrFT multiplier problem to the
Fourier multiplier problem. Secondly, we verify that the Marcinkiewicz,
derivative, and Mihlin--H\"ormander hypotheses are preserved, up to constants
depending only on $\alpha$ and the dimension, under the fixed rescaling
$\xi\mapsto (\sin\alpha)\xi.$

\textbf{Step 1: The conjugation identity.} By the transfer identity established earlier, we have
$$
T_m^\alpha=\Ma^{-1}T_{m_\alpha}\Ma,
\qquad
m_\alpha(\xi)=m((\sin\alpha)\xi).
$$
Hence, for every $1\le p\le\infty$, we have
$$\|T_m^\alpha f\|_{L^p(\R^n)}=\|T_{m_\alpha}(\Ma f)\|_{L^p(\R^n)}.$$
Since $\Ma$ is an isometry on every $L^p$, we have
$$\|T_m^\alpha\|_{L^p\to L^p}=\|T_{m_\alpha}\|_{L^p\to L^p}.$$
Likewise, for weak type $(1,1)$, we obtain
$$|\{x:|T_m^\alpha f(x)|>\lambda\}|=|\{x:|T_{m_\alpha}(\Ma f)(x)|>\lambda\}|.$$
Therefore, $L^p$ or weak-type estimate for $T_{m_\alpha}$ transfers
immediately to $T_m^\alpha$.

\textbf{Step 2: Invariance of the multiplier hypotheses under fixed rescaling.} For Proposition~6.2.1 in \cite{grafakos-classical}, each multiplier
$f\mapsto (\widehat f\,m_{\mathbf j})^\vee$
is replaced by its FrFT conjugate operator
$$T_{m_{\mathbf j}}^\alpha=\Ma^{-1}T_{(m_{\mathbf j})_\alpha}\Ma.
$$
If the classical vector-valued inequality holds, namely
$$\left\|\left(\sum_{\mathbf j}|T_{(m_{\mathbf j})_\alpha}g_{\mathbf j}|^2\right)^{1/2}\right\|_{L^p}
\le C_p\left\|\left(\sum_{\mathbf j}|g_{\mathbf j}|^2\right)^{1/2}\right\|_{L^p},$$
then by taking $g_{\mathbf j}=\Ma f_{\mathbf j}$ we obtain
$$\left\|\left(\sum_{\mathbf j}|T_{m_{\mathbf j}}^\alpha(f_{\mathbf j})|^2\right)^{1/2}\right\|_{L^p}
\le C_p\left\|\left(\sum_{\mathbf j}|f_{\mathbf j}|^2\right)^{1/2}\right\|_{L^p}.$$
The converse implication is identical. This completes the proof.
\end{proof}

\begin{theorem}[\textbf{Fractional Marcinkiewicz multiplier theorem on $\R$}]\label{thm:frftmar}

Let $m:\R\to\R$ be bounded that is $\mathcal C^1$ in every dyadic set
$$(2^j,2^{j+1})\cup(-2^{j+1},-2^j),\qquad j\in\mathbb Z.$$
Assume that the derivative $m^{\prime}$ of $m$ satisfies
$$\sup_j\left(\int_{-2^j}^{-2^{j+1}}|m'(\xi)|\,d\xi+\int_{2^j}^{2^{j+1}}|m'(\xi)|\,d\xi\right)\le A<\infty.$$
Then for every $1<p<\infty$, we have that $m \in \mathcal M_p(\R)$ and for some $C>0$ we have
$$\|T_m^\alpha f\|_{L^p(\R)}\le C_{\alpha,p}\bigl(\|m\|_{L^\infty(\R)}+A\bigr)\|f\|_{L^p(\R)},$$
where
$$
C_{\alpha,p}\lesssim C_{\alpha} \max\{p,(p-1)^{-1}\}^{6}.
$$
\end{theorem}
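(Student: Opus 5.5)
The plan is to reduce to the classical Marcinkiewicz multiplier theorem on $\R$ through the conjugation identity of Proposition~\ref{prop:frft-mult}. By that proposition,
$$T_m^\alpha=\Ma^{-1}T_{m_\alpha}\Ma,\qquad \|T_m^\alpha\|_{L^p\to L^p}=\|T_{m_\alpha}\|_{L^p\to L^p},$$
with $m_\alpha(\xi)=m((\sin\alpha)\xi)$. So it suffices to show that $m_\alpha$ satisfies the classical Marcinkiewicz hypotheses with constants controlled by $\|m\|_{L^\infty}+A$. Then the classical theorem (Theorem~6.2.2 in \cite{grafakos-classical}, with constant $\lesssim\max\{p,(p-1)^{-1}\}^6$) finishes the argument.

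\textbf{Step 1: boundedness and piecewise regularity.} Trivially $\|m_\alpha\|_{L^\infty}=\|m\|_{L^\infty}$. Since $\xi\mapsto(\sin\alpha)\xi$ is linear, $m_\alpha$ is $\mathcal C^1$ on the preimages of the dyadic sets. These preimages are the dilates $\sa^{-1}\bigl((2^j,2^{j+1})\cup(-2^{j+1},-2^j)\bigr)$; when $\sin\alpha<0$ the two halves are swapped, which is harmless because the dyadic sets are symmetric.

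\textbf{Step 2: transfer of the variation condition (main obstacle).} Lemma~\ref{lem:marc1} presupposes variation control on the dilated intervals $[\sa2^j,\sa2^{j+1}]$. The hypothesis here, however, is on the standard dyadic intervals, and unless $\sa$ is a power of $2$ these do not align. I would handle this as follows. Choose $k\in\Z$ with $2^{k}\le \sa<2^{k+1}$. Then each interval $[2^j,2^{j+1}]$ in the $\xi$-variable maps under $u=(\sin\alpha)\xi$ onto $[\sa2^j,\sa2^{j+1}]\subset[2^{j+k},2^{j+k+2}]$, which is covered by at most two classical dyadic intervals. The same holds on the negative axis.

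There is a subtlety: $m$ is only piecewise $\mathcal C^1$, so it may jump at the dyadic endpoint $2^{j+k+1}$ interior to the image interval. The classical Marcinkiewicz theorem is also formulated piecewise, so it is enough to bound $\int|m_\alpha'|$ over the $\mathcal C^1$ pieces. Using $\int_{2^j}^{2^{j+1}}|m_\alpha'(\xi)|\,d\xi=\int_{\sa2^j}^{\sa2^{j+1}}|m'(u)|\,du$ as in the proof of Lemma~\ref{lem:marc1}, this gives a bound of at most $2A$.

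If the version of the theorem I cite requires $m_\alpha\in\mathcal C^1$ on each full classical dyadic interval, I would instead absorb the single possible interior jump by splitting $m_\alpha$. Write $m_\alpha=m_\alpha\chi_{E}+m_\alpha\chi_{E^c}$, where $E$ is a union of alternate dilated pieces arranged so that each summand has at most one regular piece per dyadic interval. Each summand is handled separately, and the splitting itself is an $L^p$-bounded operation: a sum of sharp dyadic projections, bounded by Theorem~\ref{theorem2} and Theorem~\ref{thm:frftvalue}. This loses only an absolute constant, which is absorbed into $C_\alpha$.

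\textbf{Step 3: conclusion.} Applying the classical theorem to $m_\alpha$ gives
$$\|T_{m_\alpha}g\|_{L^p}\lesssim \max\{p,(p-1)^{-1}\}^6(\|m\|_\infty+2A)\|g\|_{L^p}.$$
With $g=\Ma f$ and Lemma~\ref{lem:isometry}, this yields the claim with $C_{\alpha,p}\lesssim C_\alpha\max\{p,(p-1)^{-1}\}^6$, and it shows $m\in\mathcal M_p(\R)$ for the FrFT multiplier.
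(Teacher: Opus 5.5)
Your route is the paper's. You conjugate by Proposition~\ref{prop:frft-mult} and change variables $u=(\sin\alpha)\xi$, so that $\int_{2^j}^{2^{j+1}}|m_\alpha'|=\int_{\sa2^j}^{\sa2^{j+1}}|m'|$. You then cover the image interval by boundedly many standard dyadic intervals, getting $2A$ where the paper writes $C_\alpha A$, and apply the classical Marcinkiewicz theorem to $m_\alpha$ with $g=\Ma f$. You are right to flag a point the paper passes over silently. $m_\alpha$ is only $\mathcal C^1$ on the \emph{dilated} dyadic sets, so it can jump at $2^{j+k+1}/\sa$ inside a standard dyadic interval. The classical hypothesis, $\mathcal C^1$ on each full dyadic set, is therefore not literally verified by the covering argument.

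Your handling of that point needs correcting.
\begin{enumerate}[label=(\roman*)]
\item Discarding the jump is not legitimate in general, because jumps count in the total variation. Here, however, there is at most one jump per dyadic interval, of size at most $2\|m\|_{L^\infty}$. So the bounded-variation form of the theorem (Stein's version) gives the bound with $3\|m\|_{L^\infty}+2A$ in place of $\|m\|_{L^\infty}+2A$.
\item Your fallback splitting does not remove the discontinuity, since $m_\alpha\chi_E$ still jumps to $0$ inside a standard dyadic interval. It works only if you factor $m_\alpha\chi_E=\tilde m\,\chi_E$, with $\tilde m$ an absolutely continuous extension of $m_\alpha$ off the piece (e.g.\ constant beyond it), and write $T_{m_\alpha\chi_E}=T_{\tilde m}T_{\chi_E}$.
\item Even then, boundedness of $T_{\chi_E}$ needs the sharp Littlewood--Paley theorem for the dilated dyadic sets. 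Theorem~\ref{theorem2} only treats conjugates of undilated blocks.
\end{enumerate}

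All of this can be bypassed, and neither you nor the paper does so. Fourier multiplier norms are dilation invariant: $\|T_{m(\lambda\cdot)}\|_{L^p\to L^p}=\|T_m\|_{L^p\to L^p}$ for $\lambda\neq0$. Hence $\|T_m^\alpha\|_{L^p\to L^p}=\|T_m\|_{L^p\to L^p}$, and the classical theorem applies directly to $m$, which satisfies its hypotheses verbatim. This needs no covering, involves no jumps, and yields a constant $\lesssim\max\{p,(p-1)^{-1}\}^6$ independent of $\alpha$.
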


\begin{proof}
Let $m_\alpha(\xi)=m((\sin\alpha)\xi).$ Then, we have
$$m_\alpha'(\xi)=(\sin\alpha)\,m'((\sin\alpha)\xi).$$
Thus, for every dyadic interval $(2^j,2^{j+1})$, we have
$$\int_{2^j}^{2^{j+1}}|m_\alpha'(\xi)|\,d\xi=\int_{(\sin\alpha)2^j}^{(\sin\alpha)2^{j+1}}|m'(u)|\,du,$$
after the change of variables $u=(\sin\alpha)\xi$. 

Since the image interval differs
from an classical dyadic interval only by a fixed scale factor, it can be covered by
finitely many dyadic intervals, with covering number depending only on
$\alpha$. It is follows that
$$\sup_j \int_{2^j}^{2^{j+1}}|m_\alpha'(\xi)|\,d\xi\le C_\alpha A,$$
and similarly on the negative side. It is easy to obtain that  $\|m_\alpha\|_{L^\infty}=\|m\|_{L^\infty}.$

Therefore the Marcinkiewicz multiplier theorem on $\R$ applies to $m_\alpha$, yielding
$$\|T_{m_\alpha}g\|_{L^p(\R)}\le C_{\alpha,p}\bigl(\|m\|_{L^\infty}+A\bigr)\|g\|_{L^p(\R)}.$$
Taking $g=\Ma f$ gives the desired FrFT estimate.
\end{proof}

\begin{theorem}[\textbf{Fractional Mihlin--H\"ormander condition}]\label{FMHC}
Let $m$ be a complex-valued bounded function on $\R^n\setminus\{0\}$ such that for every
multi-index $\beta$ with $|\beta|\le [n/2]+1$ and every $R>0$,
$$\left(\int_{R<|\xi|<2R}|\partial^\beta m(\xi)|^2\,d\xi\right)^{1/2}\le A\,R^{n/2-|\beta|}<\infty.$$
Then, for every $1<p<\infty$, $m$ lies in $\mathcal M(\R^n)$ and the following estiomat is valid:
$$\|T_m^\alpha f\|_{L^p(\R^n)}\le C_{\alpha,n}\max\{p,(p-1)^{-1}\}\bigl(A+\|m\|_{L^\infty(\R^n)}\bigr)\|f\|_{L^p(\R^n)},$$
and $T_m^\alpha$ is also of weak type $(1,1)$ with norm at most a constant multiple of
$C_{\alpha,n}(A+\|m\|_\infty)$.
\end{theorem}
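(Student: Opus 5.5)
The plan is to reduce everything to the classical Hörmander--Mihlin theorem through the conjugation identity of Proposition~\ref{prop:frft-mult}. Write $m_\alpha(\xi):=m((\sin\alpha)\xi)$. Proposition~\ref{prop:frft-mult} gives
$$T_m^\alpha=\Ma^{-1}T_{m_\alpha}\Ma \qquad\text{and}\qquad \|T_m^\alpha\|_{L^p\to L^p}=\|T_{m_\alpha}\|_{L^p\to L^p}.$$
So the $L^p$ statement reduces to bounding $T_{m_\alpha}$ on $L^p$. The weak type $(1,1)$ statement reduces in the same way: Proposition~\ref{prop:family}(ii) says the level sets of $|T_m^\alpha f|$ and $|T_{m_\alpha}(\Ma f)|$ coincide, and $\|\Ma f\|_{L^1}=\|f\|_{L^1}$ by Lemma~\ref{lem:isometry}.

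Next I check that $m_\alpha$ satisfies the hypotheses of the classical theorem with the same constants.

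\begin{enumerate}[label=(\roman*)]
\item Boundedness is immediate, since $\|m_\alpha\|_{L^\infty}=\|m\|_{L^\infty}$.
\item For the Sobolev-type condition, apply Lemma~\ref{lem:mihlin}(ii) for each $|\beta|\le[n/2]+1$. The chain rule produces a factor $(\sin\alpha)^{|\beta|}$, and the substitution $u=(\sin\alpha)\xi$ maps the annulus $R<|\xi|<2R$ onto $\sa R<|u|<2\sa R$ with Jacobian $\sa^{-n}$.
\item The hypothesis on $m$ holds at every radius, in particular at $\sa R$. The powers of $\sa$ then cancel exactly, giving
$$\Bigl(\int_{R<|\xi|<2R}|\partial^\beta m_\alpha|^2\,d\xi\Bigr)^{1/2}\le A R^{n/2-|\beta|}.$$
\end{enumerate}

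The only care needed is that $\sin\alpha$ may be negative. Since $|\xi|\mapsto|(\sin\alpha)\xi|=\sa|\xi|$, the annulus is still mapped onto an annulus. Moreover $|(\sin\alpha)^{|\beta|}|=\sa^{|\beta|}$, so the sign does not matter.

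Now apply the classical Hörmander--Mihlin multiplier theorem (Theorem~6.2.7 in \cite{grafakos-classical}) to $m_\alpha$. This gives
$$\|T_{m_\alpha}g\|_{L^p}\le C_n\max\{p,(p-1)^{-1}\}\bigl(A+\|m\|_{L^\infty}\bigr)\|g\|_{L^p}$$
together with a weak $(1,1)$ bound $\lesssim C_n(A+\|m\|_\infty)$. Setting $g=\Ma f$ transfers both estimates to $T_m^\alpha$, with $C_{\alpha,n}$ in fact independent of $\alpha$.

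The main obstacle is not analytic. It is the bookkeeping of the rescaled annuli in the second step: the hypothesis must be usable at the radius $\sa R$ rather than $R$. This is fine because it is assumed for every $R>0$. If one instead wanted a dyadic-only version of the hypothesis, one would cover $\{\sa R<|u|<2\sa R\}$ by two dyadic annuli and absorb a factor depending on $\alpha$. I would mention this as a remark rather than rely on it.
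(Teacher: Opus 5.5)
Your proposal is correct and follows the paper's proof. You conjugate by $\Ma$ via Proposition~\ref{prop:frft-mult}, use the chain rule and the substitution $u=(\sin\alpha)\xi$ to show that $m_\alpha$ satisfies the H\"ormander condition (invoking the hypothesis at radius $\sa R$), apply the classical theorem to $m_\alpha$, and transfer the strong and weak-type bounds by setting $g=\Ma f$. Your observation that the powers of $\sa$ cancel exactly (constant $A$ rather than the paper's $C_{\alpha,n}A$, as already shown in Lemma~\ref{lem:mihlin}(ii)) is a harmless sharpening.
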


\begin{proof}
For any multi-index $\beta$,
$$\partial^\beta m_\alpha(\xi)=(\sin\alpha)^{|\beta|}(\partial^\beta m)\bigl((\sin\alpha)\xi\bigr).$$
Hence, we have
$$\begin{aligned}\left(\int_{R<|\xi|<2R}|\partial^\beta m_\alpha(\xi)|^2\,d\xi\right)^{1/2}
&=|\sin\alpha|^{|\beta|}
\left(\int_{R<|\xi|<2R}\bigl|(\partial^\beta m)((\sin\alpha)\xi)\bigr|^2\,d\xi\right)^{1/2} \\
&=|\sin\alpha|^{|\beta|-n/2}\left(\int_{|\sin\alpha|R<|u|<2|\sin\alpha|R}|\partial^\beta m(u)|^2\,du\right)^{1/2} \\&\le
C_{\alpha,n}A\,R^{n/2-|\beta|}.
\end{aligned}
$$
Therefore $m_\alpha$ satisfies the classical Mihlin--H\"ormander condition. Applying the classical theorem to $m_\alpha$ gives
$$\|T_{m_\alpha}g\|_{L^p}\le C_{\alpha,n}\max\{p,(p-1)^{-1}\}(A+\|m\|_\infty)\|g\|_{L^p},$$
and also the weak-type estimate
$$\|T_{m_\alpha}g\|_{L^{1,\infty}}\le C_{\alpha,n}(A+\|m\|_\infty)\|g\|_{L^1}.$$
Taking $g=\Ma f$ and using the fact that $\Ma$ does not change level sets proves the FrFT version. This completes the proof.
\end{proof}

\section{Maximal bounds, rough square functions, and almost orthogonality}
In this section, we will give some positive maximal and rough-kernel theorems.

\begin{theorem}[\textbf{Transport of maximal multiplier bounds}]\label{thm:maximal-transport}
Let $\alpha\notin\pi\mathbb Z$, let $\{T_k\}_{k\in\mathbb Z}$ be a family of classical multiplier operators on $\R^n$, and define
$$T_k^\alpha:=\Mai T_k\Ma,\qquad k\in\mathbb Z.$$
Assume that for some $1\le p\le \infty$ and some constant $C>0$,
$$\left\|\sup_{k\in\mathbb Z}|T_k g|\right\|_{L^p(\R^n)}
\le C\|g\|_{L^p(\R^n)}
\qquad\text{for all } g\in L^p(\R^n).$$
Then, we have
$$\left\|\sup_{k\in\mathbb Z}|T_k^\alpha f|\right\|_{L^p(\R^n)}
\le C\|f\|_{L^p(\R^n)}
\qquad\text{for all } f\in L^p(\R^n).$$
Consequently, every positive maximal estimate for dilated multipliers, rectangular partial sums, or continuous average-square families has an FrFT analogue with the same constant.
\end{theorem}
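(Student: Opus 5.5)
The argument is the conjugation principle of Lemma~\ref{lem:conjprin} (equivalently Proposition~\ref{prop:family}(iv)) applied to the family $\{T_k\}$; nothing beyond unimodularity of the chirp is needed. Fix $f\in L^p(\R^n)$ and set $g:=\Ma f$. By Lemma~\ref{lem:isometry}, $g\in L^p(\R^n)$ with $\|g\|_{L^p}=\|f\|_{L^p}$, so the hypothesis applies to $g$.

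First I will establish the pointwise identity of the maximal functions. For each $k$, the definition gives $T_k^\alpha f=\Mai T_k g$. Since $\Mai$ is multiplication by $e^{-i\pi|x|^2\cot\alpha}$, which has modulus one, we get $|T_k^\alpha f(x)|=|T_k g(x)|$ for almost every $x$. A countable union of null sets is null, so there is a single full-measure set on which these equalities hold for all $k\in\mathbb Z$ simultaneously. Taking the supremum over $k$ there yields
$$\sup_{k\in\mathbb Z}|T_k^\alpha f(x)|=\sup_{k\in\mathbb Z}|T_k g(x)|\quad\text{a.e.}$$
In particular, the two maximal functions are measurable together and have identical $L^p$ norms.

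Next I chain this with the hypothesis:
$$\Bigl\|\sup_k|T_k^\alpha f|\Bigr\|_{L^p}=\Bigl\|\sup_k|T_k g|\Bigr\|_{L^p}\le C\|g\|_{L^p}=C\|f\|_{L^p}.$$
This holds uniformly for $1\le p\le\infty$, including $p=\infty$, since the argument only uses an equality of moduli.

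For the ``consequently'' clause, I will note that the same reasoning applies verbatim to any index set $J$ in place of $\mathbb Z$. For an uncountable family, such as continuous dilations $t>0$, one reduces to a countable dense set in the usual way, or works with the measurable version of the supremum, exactly as in the classical statement. The same applies to the continuous square-function families of Proposition~\ref{prop:family}(v). For dilated multipliers in particular, Proposition~\ref{prop:frft-mult} identifies the FrFT multiplier $T^\alpha_{m(\cdot/t)}$ with $\Mai T_{m_\alpha(\cdot/t)}\Ma$, so the classical maximal bound for the rescaled symbol $m_\alpha$ transfers with the same constant.

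The only delicate point is measure-theoretic: making the a.e. identities hold simultaneously across the family. This is trivial for countable families and requires the standard countable-dense-subfamily reduction otherwise.
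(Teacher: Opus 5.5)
Your proposal is correct and follows essentially the same route as the paper. Both arguments set $g=\Ma f$, use the unimodularity of the chirp to get the pointwise identity $\sup_k|T_k^\alpha f|=\sup_k|T_k g|$ a.e., and then apply the classical maximal bound together with the $L^p$-isometry of $\Ma$. Your remark that the countably many a.e.\ identities hold on a common full-measure set is a detail the paper leaves implicit, and your comments on the \emph{consequently} clause go a little beyond the paper's argument but are consistent with it.
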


\begin{proof}
Fix $f\in L^p(\R^n)$ and set $g:=\Ma f.$ By definition of the FrFT-conjugated operator, for every $k\in\mathbb Z$, we have 
$$T_k^\alpha f=\Mai T_k(\Ma f)=\Mai T_k g.$$
Since $\Mai$ is multiplication by the unimodular chirp $e^{-i\pi|x|^2\cot\alpha},$
we have that for almost every $x\in\R^n$,
$$|T_k^\alpha f(x)|=|\Mai T_k g(x)|=|T_k g(x)|=|T_k(\Ma f)(x)|.$$
Taking the supremum over $k\in\mathbb Z$, we obtain the pointwise identity
$$\sup_{k\in\mathbb Z}|T_k^\alpha f(x)|=\sup_{k\in\mathbb Z}|T_k(\Ma f)(x)|.$$
Therefore, we have
$$\left\|\sup_{k\in\mathbb Z}|T_k^\alpha f|\right\|_{L^p(\R^n)}=\left\|\sup_{k\in\mathbb Z}|T_k(\Ma f)|\right\|_{L^p(\R^n)}.$$

Applying the assumed classical maximal estimate with $g=\Ma f$, we get
$$\left\|\sup_{k\in\mathbb Z}|T_k^\alpha f|\right\|_{L^p(\R^n)}\le C\|\Ma f\|_{L^p(\R^n)}.$$
Now use the fact that $\Ma$ is an isometry on every $L^p(\R^n)$:
$$\|\Ma f\|_{L^p(\R^n)}=\|f\|_{L^p(\R^n)}.$$
Hence, we have
$$\left\|\sup_{k\in\mathbb Z}|T_k^\alpha f|\right\|_{L^p(\R^n)}\le C\|f\|_{L^p(\R^n)}.$$
This proves the theorem.
\end{proof}

\begin{theorem}[\textbf{Fractional of rough square-function bounds}]\label{thm:rough-square-transport}
Let $\alpha\notin\pi\mathbb Z$, let $\mu$ be a compactly supported finite Borel measure satisfying the usual Fourier decay condition, and let $\{T_{\mu,j}\}_{j\in\mathbb Z}$ be the associated classical dyadically dilated rough family. Define
$$
T_{\mu,j}^\alpha:=\Mai T_{\mu,j}\Ma,\qquad j\in\mathbb Z,
$$
and
$$G_\mu(g):=\left(\sum_{j\in\mathbb Z}|T_{\mu,j}g|^2\right)^{1/2},\qquad G_\mu^\alpha(f):=
\left(\sum_{j\in\mathbb Z}|T_{\mu,j}^\alpha f|^2\right)^{1/2}.
$$
Assume that for some $1<p<\infty$ and some constant $C>0$,
$$\|G_\mu(g)\|_{L^p(\R^n)}\le C\|g\|_{L^p(\R^n)}\qquad\text{for all } g\in L^p(\R^n).$$
Then, we have
$$\|G_\mu^\alpha(f)\|_{L^p(\R^n)}\le C\|f\|_{L^p(\R^n)}\qquad\text{for all } f\in L^p(\R^n).$$
Hence the FrFT rough square function is bounded on the same $L^p$ range with the same constant as the classical one.
\end{theorem}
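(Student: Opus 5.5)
The plan is to transfer the classical estimate pointwise, exactly as in Theorem~\ref{thm:maximal-transport} and Proposition~\ref{prop:family}(iii). Fix $f\in L^p(\R^n)$ and set $g:=\Ma f$. By Lemma~\ref{lem:conjprin}(2), $g\in L^p(\R^n)$ and $\|g\|_{L^p}=\|f\|_{L^p}$, so the assumed classical bound for $G_\mu$ can be applied to $g$.

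The first step is a pointwise identity for each $j$. By definition, $T_{\mu,j}^\alpha f=\Mai T_{\mu,j}(\Ma f)=\Mai T_{\mu,j}g$. Since $\Mai$ is multiplication by the unimodular factor $e^{-i\pi|x|^2\cot\alpha}$, Lemma~\ref{lem:conjprin}(3) gives $|T_{\mu,j}^\alpha f(x)|=|T_{\mu,j}g(x)|$ for almost every $x$. Only countably many indices $j$ occur, so the exceptional null sets can be combined into a single null set. Squaring and summing over $j\in\Z$ then yields
$$G_\mu^\alpha(f)(x)=G_\mu(g)(x)\qquad\text{for a.e. }x\in\R^n.$$

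The second step takes $L^p$ norms. Using the hypothesis and the isometry,
$$\|G_\mu^\alpha(f)\|_{L^p}=\|G_\mu(g)\|_{L^p}\le C\|g\|_{L^p}=C\|f\|_{L^p}.$$
The constant is unchanged because no rescaling of symbols is involved. The family $T_{\mu,j}$ is conjugated directly, not expressed through $\Fa$, so the $\sin\alpha$ dilation of Proposition~\ref{prop:frft-mult} never enters.

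The only point needing care is well-definedness. $T_{\mu,j}$ is convolution with the dilated measure $\mu_j$, which is a finite measure. It is therefore defined and bounded on every $L^p$ by Minkowski's inequality, and composing with the bounded multiplications $\Ma$ and $\Mai$ keeps everything defined on $L^p$. Hence there is no genuine obstacle: the Fourier decay condition on $\mu$ is needed only for the classical hypothesis, and it is not used in the transfer.
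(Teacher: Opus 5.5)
Your proposal is correct and follows the paper's argument: you set $g=\Ma f$, use the unimodularity of the chirp to get $|T_{\mu,j}^\alpha f|=|T_{\mu,j}g|$ a.e., and transfer the classical bound through the $L^p$-isometry of $\Ma$. The only cosmetic difference is that the paper goes through the truncated sums over $|j|\le N$ and then uses monotone convergence, whereas you obtain the full a.e. identity $G_\mu^\alpha(f)=G_\mu(\Ma f)$ directly by discarding a countable union of null sets; this is equally valid.
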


\begin{proof}
Fix $f\in L^p(\R^n)$ and set $g:=\Ma f.$ For each $j\in\mathbb Z$, by definition of the FrFT-conjugated rough family,
$$T_{\mu,j}^\alpha f=\Mai T_{\mu,j}(\Ma f)=\Mai T_{\mu,j}g.$$
Since $\Mai$ is multiplication by a unimodular factor, we have that for almost every $x\in\R^n$ and every $j\in\mathbb Z$,
$$|T_{\mu,j}^\alpha f(x)|=|T_{\mu,j}g(x)|=|T_{\mu,j}(\Ma f)(x)|.$$

To justify the square-function identity carefully, for each $N\in\mathbb N$ define the truncated square functions
$$G_{\mu,N}(g):=\left(\sum_{|j|\le N}|T_{\mu,j}g|^2\right)^{1/2},\qquad G_{\mu,N}^\alpha(f):=\left(\sum_{|j|\le N}|T_{\mu,j}^\alpha f|^2\right)^{1/2}.$$
Then for almost every $x\in\R^n$,
$$G_{\mu,N}^\alpha(f)(x)=\left(\sum_{|j|\le N}|T_{\mu,j}^\alpha f(x)|^2\right)^{1/2}=\left(\sum_{|j|\le N}|T_{\mu,j}(\Ma f)(x)|^2\right)^{1/2}
=G_{\mu,N}(\Ma f)(x).$$
Taking $L^p$ norms, we obtain
$$\|G_{\mu,N}^\alpha(f)\|_{L^p(\R^n)}=\|G_{\mu,N}(\Ma f)\|_{L^p(\R^n)}.$$
Since $G_{\mu,N}(h)\le G_\mu(h)$ pointwise for every $h$, the assumed classical rough square-function estimate gives
$$\|G_{\mu,N}^\alpha(f)\|_{L^p(\R^n)}=\|G_{\mu,N}(\Ma f)\|_{L^p(\R^n)}\le \|G_\mu(\Ma f)\|_{L^p(\R^n)}\le C\|\Ma f\|_{L^p(\R^n)}.$$
Using again the $L^p$-isometry of $\Ma$, we have
$$\|G_{\mu,N}^\alpha(f)\|_{L^p(\R^n)}\le C\|f\|_{L^p(\R^n)}.$$
The constant on the right-hand side is independent of $N$.

Now let $N\to\infty$. Since
$$G_{\mu,N}^\alpha(f)(x)\uparrow G_\mu^\alpha(f)(x)\qquad\text{for almost every }x\in\R^n,$$
the monotone convergence theorem yields
$$\|G_\mu^\alpha(f)\|_{L^p(\R^n)}=\lim_{N\to\infty}\|G_{\mu,N}^\alpha(f)\|_{L^p(\R^n)}\le C\|f\|_{L^p(\R^n)}.$$
Therefore, we have
$$\|G_\mu^\alpha(f)\|_{L^p(\R^n)}\le C\|f\|_{L^p(\R^n)}.$$
This proves the theorem.
\end{proof}

\begin{theorem}[\textbf{Fractional almost orthogonality}]\label{thm:634}
Let $\alpha\notin\pi\mathbb Z$, $1<p\le 2\le q<\infty$. Let $\{T_j^\alpha\}_{j\in\mathbb Z}$ be a family of FrFT
multiplier operators satisfying
$$T_j^\alpha=\Ma^{-1}T_j\Ma,$$
where each $T_j$ is Fourier multiplier operator whose symbol is
supported in the dyadic annulus
$$2^{j-1}\le |\xi|\le 2^{j+1}.$$
If
$$\sup_j \|T_j^\alpha\|_{L^p(\R^n)\to L^q(\R^n)}<\infty,$$
then for each $f\in L^p(\R^n)$, the series
$$T^\alpha(f):=\sum_{j\in\mathbb Z}T_j^\alpha(f)$$
converges in $L^q(\R^n)$, and there exists $C_{\alpha,p,q,n}$ such that
$$\|T^\alpha\|_{L^p(\R^n)\to L^q(\R^n)}\le C_{\alpha,p,q,n}\sup_j\|T_j^\alpha\|_{L^p\to L^q}.$$
\end{theorem}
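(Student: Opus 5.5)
The plan is to reduce, via the conjugation principle of Lemma~\ref{lem:conjprin}, to the classical almost-orthogonality statement (Proposition~6.3.4 type in \cite{grafakos-classical}), and then to prove that classical statement through Littlewood--Paley theory. Set $g:=\Ma f$. Since $T_j^\alpha=\Mai T_j\Ma$ and $\Ma$ is an $L^p$ and $L^q$ isometry, we have $\|T_j^\alpha\|_{L^p\to L^q}=\|T_j\|_{L^p\to L^q}$. Moreover, for any finite set $F\subset\Z$,
$$\Big\|\sum_{j\in F}T_j^\alpha f\Big\|_{L^q}=\Big\|\sum_{j\in F}T_j g\Big\|_{L^q}.$$
So convergence of partial sums in $L^q$ (Cauchy property) and the norm bound for $T^\alpha$ are equivalent to the same statements for $T=\sum_j T_j$ acting on $g$. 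It therefore suffices to prove the classical bound
$$\Big\|\sum_{j\in F}T_jg\Big\|_{L^q}\le C\sup_j\|T_j\|_{L^p\to L^q}\|g\|_{L^p},$$
uniformly in $F$, together with the Cauchy property.

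For the classical step, choose a Littlewood--Paley function $\widetilde\varphi$ with $\widetilde\varphi(2^{-j}\cdot)\equiv1$ on $2^{j-1}\le|\xi|\le2^{j+1}$, supported in $2^{j-2}\le|\xi|\le 2^{j+2}$, and let $\widetilde\Delta_j$ be the associated operators. Then $T_j=T_j\widetilde\Delta_j$ and also $T_j=\widetilde\Delta_jT_j$. Write $h_j:=T_j\widetilde\Delta_j g$. The supports of the $\widehat{h_j}$ have bounded overlap, so the reverse Littlewood--Paley inequality (the converse part of Theorem~\ref{thm:FrFTL-P} with $\alpha$-free version, or the classical one) gives
$$\Big\|\sum_{j\in F}h_j\Big\|_{L^q}\lesssim\Big\|\Big(\sum_j|h_j|^2\Big)^{1/2}\Big\|_{L^q}.$$
Since $q\ge2$, Minkowski's inequality in $L^{q/2}$ yields
$$\Big\|\Big(\sum_j|h_j|^2\Big)^{1/2}\Big\|_{L^q}\le\Big(\sum_j\|h_j\|_{L^q}^2\Big)^{1/2}\le \sup_j\|T_j\|_{L^p\to L^q}\Big(\sum_j\|\widetilde\Delta_jg\|_{L^p}^2\Big)^{1/2}.$$
Since $p\le2$, the reverse Minkowski inequality in $L^{p/2}$ gives
$$\Big(\sum_j\|\widetilde\Delta_jg\|_{L^p}^2\Big)^{1/2}\le\Big\|\Big(\sum_j|\widetilde\Delta_jg|^2\Big)^{1/2}\Big\|_{L^p},$$
and the direct Littlewood--Paley estimate (Step~1 of Theorem~\ref{thm:FrFTL-P}) bounds this by $C\|g\|_{L^p}$. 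Applying the same argument to tails $F\subset\{|j|>N\}$ and observing that $\sum_{|j|>N}\|\widetilde\Delta_j g\|_{L^p}^2\to0$ (by dominated convergence of the square function) gives the Cauchy property, hence convergence in $L^q$.

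The main obstacle is the reverse square-function step, since $\sum_j h_j$ need not be in $L^q$ a priori. I would handle it by duality: pair $\sum_{j\in F} h_j$ with $u\in L^{q'}$, insert $\widetilde\Delta_j$ onto $u$ using $h_j=\widetilde\Delta_jh_j$, then apply Cauchy--Schwarz in $j$, Hölder, and the direct Littlewood--Paley bound on $L^{q'}$. This avoids the polynomial ambiguity entirely because $F$ is finite. The resulting constant depends on $n,p,q$. The dependence on $\alpha$ claimed in $C_{\alpha,p,q,n}$ is then harmless, and in fact the constant is $\alpha$-independent.
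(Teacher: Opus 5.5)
Your proposal is correct and follows the same route as the paper: conjugate by $\Ma$, use that $\Ma$ is an $L^p$ and $L^q$ isometry, and reduce to the classical almost-orthogonality principle, which the paper simply cites as Theorem~6.3.6 of \cite{grafakos-classical}. The only difference is that you also prove that classical principle yourself. You do this via the reverse square-function bound by duality, Minkowski in $L^{q/2}$, reverse Minkowski in $L^{p/2}$, and the direct Littlewood--Paley estimate, and you check the Cauchy property of the partial sums explicitly, which the paper takes from the cited result.
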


\begin{proof}
Assume $T_j^\alpha=\Ma^{-1}T_j\Ma$, where each Fourier multiplier $T_j$
has symbol supported in
$$2^{j-1}\le |\xi|\le 2^{j+1}.$$
If
$$\sup_j \|T_j^\alpha\|_{L^p\to L^q}=A<\infty,$$
then, because $\Ma$ is an isometry on $L^p$ and $L^q$,
$$\sup_j \|T_j\|_{L^p\to L^q}=A.$$
Hence, using the Theorem~6.3.6 in \cite{grafakos-classical}, for every $g\in L^p$, we have
$$\left\|\sum_j T_j(g)\right\|_{L^q}\le C_{p,q,n}A\|g\|_{L^p}.$$
Now, set $g=\Ma f$ and define
$$T^\alpha(f):=\Ma^{-1}\left(\sum_j T_j(\Ma f)\right)=\sum_j T_j^\alpha(f).$$
Then, we have 
$$\|T^\alpha(f)\|_{L^q}=\left\|\sum_j T_j(\Ma f)\right\|_{L^q}\le C_{p,q,n}A\|f\|_{L^p}.$$
Thus, the FrFT series converges in $L^q$ and satisfies the same type of estimate.
\end{proof}

\section{Mixed FrFT--dyadic operator}

In tihis section, we notice that the classical dyadic martingale/Haar system does not as an ordinary dyadic system but it becomes a \emph{twisted} dyadic system.

\begin{definition}[\textbf{Twisted dyadic operators}]
Let $E_k$ and $D_k$ be the classical dyadic conditional expectation and martingale difference operators. Define
$$E_k^{\alpha}:=\Mai E_k\Ma\quad\text{and}\quad\qquad D_k^{\alpha}:=\Mai D_k\Ma.$$
In one dimension, if $h_I$ is the classical Haar function on a dyadic interval $I$, define
$$h_I^{\alpha}:=\Mai h_I=e^{-i\pi |x|^2\cot\alpha}h_I(x).$$
\end{definition}

\begin{proposition}[\textbf{The properties of the twisted dyadic operators}]\label{prop:dyadicbasic}
For every $1\le p\le\infty$, we have
$$\|E_k^\alpha\|_{L^p\to L^p}=\|E_k\|_{L^p\to L^p}\quad\text{and}\quad\|D_k^\alpha\|_{L^p\to L^p}=\|D_k\|_{L^p\to L^p}.$$
On $L^2(\R^n)$, we have
$$(E_k^\alpha)^\ast=E_k^\alpha\quad\text{and}\quad(D_k^\alpha)^\ast=D_k^\alpha.$$
\end{proposition}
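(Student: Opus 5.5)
The plan is to treat both claims as consequences of the fact that $\Ma$ is an $L^p$ isometry and a unitary on $L^2$ (Lemma~\ref{lem:isometry} and Lemma~\ref{lem:isom}); no dyadic structure will be used beyond the classical properties of $E_k$ and $D_k$.

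\textbf{Norm identities.} Fix $1\le p\le\infty$ and $f\in L^p(\R^n)$, and set $g=\Ma f$. Since $\Mai$ multiplies by a unimodular factor,
$$
\|E_k^\alpha f\|_{L^p}=\|E_k(\Ma f)\|_{L^p}\le \|E_k\|_{L^p\to L^p}\|f\|_{L^p},
$$
which gives $\|E_k^\alpha\|_{L^p\to L^p}\le\|E_k\|_{L^p\to L^p}$. For the reverse inequality I will write $E_k=\Ma E_k^\alpha\Mai$ and run the same argument with the roles of $\Ma$ and $\Mai$ swapped. Equivalently, $f\mapsto\Ma f$ is a bijective isometry of $L^p$, so the two suprema defining the operator norms range over the same set. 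The identical argument gives the statement for $D_k$.

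\textbf{Self-adjointness.} On $L^2$ I will use the classical facts $E_k^\ast=E_k$ (orthogonal projection onto functions constant on dyadic cubes of side $2^{-k}$) and $D_k^\ast=D_k$ (since $D_k=E_{k+1}-E_k$). Together with $\Ma^\ast=\Mai$ from Lemma~\ref{lem:isometry}, this yields
$$
(E_k^\alpha)^\ast=(\Mai E_k\Ma)^\ast=\Ma^\ast E_k^\ast(\Mai)^\ast=\Mai E_k\Ma=E_k^\alpha,
$$
using $(\Mai)^\ast=\Ma$. The same computation applies to $D_k^\alpha$. All operators here are bounded on $L^2$, so the adjoint of the product is the reverse product of adjoints with no domain issues.

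\textbf{Main obstacle.} The only delicate point is the reverse norm inequality and its edge cases. For $p=\infty$ and $p=1$, $E_k$ is defined by averages, and $\Ma$ still maps $L^p$ onto $L^p$ isometrically, so nothing changes. I will state the bijectivity of $\Ma$ on $L^p$ explicitly, since it follows at once from $\Ma\Mai=\mathrm{Id}$.
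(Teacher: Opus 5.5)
Your proposal is correct and follows essentially the same route as the paper: the bound $\|E_k^\alpha\|_{L^p\to L^p}\le\|E_k\|_{L^p\to L^p}$ comes from the $L^p$-isometry of $\Ma$, the reverse bound from conjugating back with $\Mai$, and self-adjointness from $\Ma^\ast=\Mai$ together with the classical self-adjointness of $E_k$ and $D_k$. Your indexing $D_k=E_{k+1}-E_k$ differs from the convention implicit in the paper ($D_k=E_k-E_{k-1}$, consistent with the $I\in D_{k-1}$ in the twisted Haar expansion), but this has no effect on the argument.
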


\begin{proof}
The $L^p$ norm equalities follow from Lemma~\ref{lem:isometry}:
$$\|E_k^\alpha f\|_{L^p}=\|E_k(\Ma f)\|_{L^p}\le\|E_k\|_{L^p\to L^p}\|f\|_{L^p},$$
and the reverse inequality is obtained by applying the same estimate to $\Ma^{-1}g$. The same argument applies to $D_k^\alpha$.

On $L^2$, $E_k$ and $D_k$ are self-adjoint, while $\Ma$ is unitary. Hence
$$(E_k^\alpha)^\ast=(\Ma^{-1}E_k\Ma)^\ast=\Ma^{-1}E_k^\ast \Ma=\Ma^{-1}E_k\Ma=E_k^\alpha,$$
and similarly for $D_k^\alpha$.
\end{proof}

\begin{proposition}[\textbf{Twisted Haar expansion}]\label{prop:haar}
In one dimension, for every locally integrable $f$ and every $k\in\mathbb Z$, we have
$$D_k^\alpha(f)=\sum_{I\in D_{k-1}} \langle f,h_I^\alpha\rangle h_I^\alpha,$$
and
$$\|D_k^\alpha(f)\|_{L^2(\R)}^2=\sum_{I\in D_{k-1}} |\langle f,h_I^\alpha\rangle|^2.$$
\end{proposition}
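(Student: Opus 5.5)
The plan is to reduce both identities to the classical Haar expansion of the martingale difference operators by writing $g:=\Ma f$. Since $|\Ma f|=|f|$, the function $g$ is locally integrable whenever $f$ is, so the classical identity applies to $g$. In one dimension the classical fact, with the paper's indexing convention for $D_{k-1}$, is
$$D_k(g)=\sum_{I\in D_{k-1}}\langle g,h_I\rangle h_I .$$
This holds pointwise, because on each dyadic interval $I$ of the coarser generation, $D_k g$ restricted to $I$ equals $\langle g,h_I\rangle h_I$. Here the $h_I$ are real-valued and $L^2$-normalized.

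The first step is to transport the coefficients. Using the sesquilinear pairing $\langle u,v\rangle=\int u\,\overline v$, we compute
$$\langle g,h_I\rangle=\int e^{i\pi x^2\cot\alpha}f(x)h_I(x)\,dx=\int f\,\overline{e^{-i\pi x^2\cot\alpha}h_I}\,dx=\langle f,h_I^\alpha\rangle,$$
since $\overline{\Mai h_I}=\Ma h_I$ because $h_I$ is real. This integral makes sense for locally integrable $f$, because $h_I$ is bounded with compact support.

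The second step applies $\Mai$ to the classical expansion. For each $x$ only one dyadic interval $I$ of the relevant generation contains $x$, so the sum is locally finite and $\Mai$ commutes with it termwise. This gives
$$D_k^\alpha f=\Mai D_k(\Ma f)=\sum_{I}\langle f,h_I^\alpha\rangle\,\Mai h_I=\sum_I\langle f,h_I^\alpha\rangle h_I^\alpha .$$

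For the norm identity, the family $\{h_I^\alpha\}_{I}$ is orthonormal in $L^2(\R)$, because $\langle h_I^\alpha,h_J^\alpha\rangle=\langle \Mai h_I,\Mai h_J\rangle=\langle h_I,h_J\rangle$ by the unitarity in Lemma~\ref{lem:isometry}. The $h_I^\alpha$ with $I$ in a single generation also have disjoint supports. Hence $\|D_k^\alpha f\|_{L^2}^2=\sum_I|\langle f,h_I^\alpha\rangle|^2$, either directly from disjoint supports or from $\|D_k^\alpha f\|_{L^2}=\|D_k g\|_{L^2}$ together with the classical Parseval identity on one generation.

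The main point needing care is interpretation rather than difficulty. When $f$ is merely locally integrable, $D_k^\alpha f$ need not be in $L^2$. The second identity should therefore be read in $[0,\infty]$: both sides are sums of nonnegative terms over disjoint supports, so they agree even when infinite. The other thing to check is the indexing convention, namely that generation $k-1$ intervals are exactly those on which $D_k$ acts as a single Haar projection; I would simply match this to the paper's classical convention.
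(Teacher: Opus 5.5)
Your proposal is correct and follows the paper's route. You set $g=\Ma f$, invoke the classical identity $D_k(g)=\sum_{I\in D_{k-1}}\langle g,h_I\rangle h_I$, move the chirp onto the Haar functions in the pairing, and get the norm identity from $\|D_k^\alpha f\|_{L^2}=\|D_k(\Ma f)\|_{L^2}$ with the classical Parseval identity. Your direct computation of $\langle \Ma f,h_I\rangle=\langle f,h_I^\alpha\rangle$, using that $h_I$ is real, bounded and compactly supported, and your reading of the norm identity in $[0,\infty]$ are slightly more careful than the paper, which justifies the coefficient transfer by unitarity of $\Ma$ on $L^2$ even though $f$ is only assumed locally integrable.
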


\begin{proof}
Let $g=\Ma f$. The classical dyadic identity gives
$$D_k(g)=\sum_{I\in D_{k-1}}\langle g,h_I\rangle h_I.$$
Conjugating by $\Ma^{-1}$, we have
$$D_k^\alpha(f)=\Ma^{-1}D_k(\Ma f)=\Ma^{-1}\left(\sum_{I\in D_{k-1}}\langle \Ma f,h_I\rangle h_I\right).$$
Because $\Ma$ is unitary on $L^2$,
$$\langle \Ma f,h_I\rangle=\langle f,\Ma^{-1}h_I\rangle=\langle f,h_I^\alpha\rangle.$$
Therefore, we have
$$
D_k^\alpha(f)=\sum_{I\in D_{k-1}}\langle f,h_I^\alpha\rangle h_I^\alpha.
$$
For the norm identity,
$$
\|D_k^\alpha(f)\|_{L^2}
=
\|D_k(\Ma f)\|_{L^2},
$$
and Parseval's identity for the classical Haar expansion yields the result.
\end{proof}

\begin{theorem}[\textbf{Twisted martingale decomposition}]\label{thm:decomp}
For every $f\in L^2(\R^n)$,
$$f=\sum_{k\in\mathbb Z} D_k^\alpha(f)$$
both almost everywhere and in $L^2(\R^n)$, and
$$\|f\|_{L^2(\R^n)}^2=\sum_{k\in\mathbb Z}\|D_k^\alpha(f)\|_{L^2(\R^n)}^2.$$
In one dimension, we have
$$f=\sum_{I\in D}\langle f,h_I^\alpha\rangle h_I^\alpha\quad\text{and}\quad
\|f\|_{L^2(\R)}^2=\sum_{I\in D} |\langle f,h_I^\alpha\rangle|^2.$$
\end{theorem}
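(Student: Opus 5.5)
The plan is to conjugate, exactly as in Propositions~\ref{prop:dyadicbasic} and~\ref{prop:haar}. Set $g:=\Ma f$. By Lemma~\ref{lem:isom}, $g\in L^2(\R^n)$ with $\|g\|_{L^2}=\|f\|_{L^2}$. The classical dyadic martingale theory on $L^2(\R^n)$ gives
$$g=\sum_{k\in\Z}D_k g$$
both almost everywhere and in $L^2$, together with the Pythagorean identity
$$\|g\|_{L^2}^2=\sum_k\|D_kg\|_{L^2}^2.$$
Here one uses the martingale convergence theorem: $E_kg\to g$ as $k\to+\infty$ and $E_kg\to 0$ as $k\to-\infty$. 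The latter holds for $L^2$ functions on $\R^n$ because the averages over dyadic cubes of side $2^{-k}\to\infty$ tend to zero, by Cauchy--Schwarz. The identity $D_k=E_k-E_{k-1}$ then gives convergence of the telescoping partial sums. The $D_k g$ are mutually orthogonal, which follows from $D_k$ being self-adjoint projections with $D_kD_l=0$ for $k\ne l$.

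The next step is to apply $\Mai$. Multiplication by the unimodular factor $e^{-i\pi|x|^2\cot\alpha}$ preserves almost-everywhere convergence pointwise. Writing $\sum_{|k|\le N}D_k^\alpha f=\Mai\bigl(\sum_{|k|\le N}D_kg\bigr)$, one gets
$$\Bigl\|f-\sum_{|k|\le N}D_k^\alpha f\Bigr\|_{L^2}=\Bigl\|g-\sum_{|k|\le N}D_kg\Bigr\|_{L^2}\to0,$$
again by Lemma~\ref{lem:isom}. This gives both modes of convergence. The Plancherel-type identity follows from $\|D_k^\alpha f\|_{L^2}=\|D_k g\|_{L^2}$. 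The $D_k^\alpha f$ are also orthogonal, since $\Ma$ is unitary and therefore $\langle D_k^\alpha f,D_l^\alpha f\rangle=\langle D_kg,D_lg\rangle$.

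For the one-dimensional statement, combine the above with Proposition~\ref{prop:haar}: each $D_k^\alpha f=\sum_{I\in D_{k-1}}\langle f,h_I^\alpha\rangle h_I^\alpha$. Summing over $k$ gives the expansion over all dyadic $I$, converging in $L^2$ as an unconditional orthogonal series. The coefficient identity $\|f\|^2=\sum_I|\langle f,h_I^\alpha\rangle|^2$ follows by summing the norm identities of Proposition~\ref{prop:haar}. Equivalently, $\{h_I^\alpha\}$ is the image of the Haar orthonormal basis under the unitary $\Mai$, hence is itself an orthonormal basis of $L^2(\R)$.

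The only genuinely nontrivial input is the classical completeness statement: that $\sum_k D_kg=g$ on $L^2(\R^n)$, which hinges on $E_kg\to0$ as $k\to-\infty$. This is cited rather than reproved. The point to handle carefully is the meaning of almost-everywhere convergence of the doubly infinite series, which I take as the limit of the symmetric partial sums $E_Ng-E_{-N-1}g$. That limit is transported verbatim through the pointwise chirp factor.
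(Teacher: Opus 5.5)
Your proposal is correct and follows the paper's proof essentially step for step. You set $g=\Ma f$, invoke the classical dyadic martingale decomposition for $g$, and transport both $L^2$ and a.e. convergence of the partial sums $\sum_{|k|\le N}D_k^\alpha f=\Mai(E_Ng-E_{-N-1}g)$ through the unimodular chirp. The one-dimensional Haar expansion then comes from Proposition~\ref{prop:haar} together with the orthonormality of $\{h_I^\alpha\}$. One small gloss: your Cauchy--Schwarz remark only gives uniform pointwise decay of $E_kg$ as $k\to-\infty$, and the $L^2$ decay needs an extra orthogonality or density step; that step is part of the classical theorem which both you and the paper cite.
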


\begin{proof}
Let $g:=\Ma f.$ Since $\Ma$ is unitary on $L^2(\R^n)$, we have $g\in L^2(\R^n)$ and
$$\|g\|_{L^2(\R^n)}=\|f\|_{L^2(\R^n)}.$$
By the classical martingale decomposition theorem, we know that
$$g=\sum_{k\in\mathbb Z} D_k(g)$$
both almost everywhere and in $L^2(\R^n)$, and moreover
$$\|g\|_{L^2(\R^n)}^2=\sum_{k\in\mathbb Z}\|D_k(g)\|_{L^2(\R^n)}^2.$$
For each $N\in\mathbb N$, define the partial sums
$$S_N g:=\sum_{|k|\le N}D_k(g).$$
Then, we have
$$S_N g\to g\qquad\text{in }L^2(\R^n)\text{ and almost everywhere.}$$
Apply $\Mai$ to this identity. Since $\Mai$ is linear, we obtain
$$\Mai(S_N g)=\sum_{|k|\le N}\Mai D_k(g).$$
Because $g=\Ma f$ and $D_k^\alpha=\Mai D_k\Ma$, this becomes
$$\Mai(S_N g)=\sum_{|k|\le N}\Mai D_k(\Ma f)=\sum_{|k|\le N}D_k^\alpha(f).$$

On the other hand, since $\Mai$ is unitary on $L^2(\R^n)$,
$$\Mai(S_N g)\to \Mai g=f\qquad\text{in }L^2(\R^n).$$
Therefore, we have
$$f=\sum_{k\in\mathbb Z}D_k^\alpha(f)\qquad\text{in }L^2(\R^n).$$
To obtain almost-everywhere convergence, note that for almost every $x\in\R^n$,
$$
S_N g(x)\to g(x).
$$
Since $\Mai$ is multiplication by the unimodular chirp $e^{-i\pi|x|^2\cot\alpha},$
we may multiply the pointwise convergence by this factor and get
$$
\sum_{|k|\le N}D_k^\alpha(f)(x)=\Mai(S_N g)(x)\longrightarrow\Mai g(x)=f(x)$$
for almost every $x\in\R^n$. Hence
$$f=\sum_{k\in\mathbb Z}D_k^\alpha(f)\qquad\text{almost everywhere.}$$

Next we prove the norm identity. Since
$$D_k^\alpha(f)=\Mai D_k(\Ma f)=\Mai D_k(g),$$
and $\Mai$ is unitary on $L^2(\R^n)$, we have
$$\|D_k^\alpha(f)\|_{L^2(\R^n)}=\|D_k(g)\|_{L^2(\R^n)}\qquad\text{for every }k\in\mathbb Z.$$
Therefore, we have
$$
\sum_{k\in\mathbb Z}\|D_k^\alpha(f)\|_{L^2(\R^n)}^2=\sum_{k\in\mathbb Z}\|D_k(g)\|_{L^2(\R^n)}^2=\|g\|_{L^2(\R^n)}^2=\|f\|_{L^2(\R^n)}^2.$$
In one dimension, Proposition~\ref{prop:haar} gives the twisted Haar expansion
$$D_k^\alpha(f)=\sum_{I\in D_{k-1}}\langle f,h_I^\alpha\rangle h_I^\alpha.$$
Summing over $k\in\mathbb Z$ and using the decomposition already proved, we obtain
$$f=\sum_{I\in D}\langle f,h_I^\alpha\rangle h_I^\alpha$$
in $L^2(\R)$ and almost everywhere. Since $\{h_I^\alpha\}_{I\in D}$ is the image of the
classical Haar system under the unitary operator $\Mai$, it is again an orthonormal system in
$L^2(\R)$. Hence Parseval's identity yields
$$\|f\|_{L^2(\R)}^2=\sum_{I\in D} |\langle f,h_I^\alpha\rangle|^2.$$
This completes the proof.
\end{proof}

\begin{theorem}[\textbf{Twisted dyadic square function}]\label{thm:dyadicsquare}
Define
$$S_{\mathrm{dy}}^\alpha(f):=\left(\sum_{k\in\mathbb Z}|D_k^\alpha(f)|^2\right)^{1/2}.$$
Then for every $1<p<\infty$ there exists a constant $c_{p,n}>0$ such that
$$\frac1{c_{p,n}}\|f\|_{L^p(\R^n)}\le \|S_{\mathrm{dy}}^\alpha(f)\|_{L^p(\R^n)}\le c_{p,n}\|f\|_{L^p(\R^n)}.$$
The constant is the same as in the classical dyadic square-function theorem.
\end{theorem}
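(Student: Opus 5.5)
The plan is to reduce the statement to the classical dyadic square-function theorem (Burkholder's martingale square-function inequality for the dyadic filtration on $\R^n$) by the chirp conjugation of Lemma~\ref{lem:conjprin}. This is the same pattern as Theorem~\ref{thm:FrFTL-P} and Theorem~\ref{thm:decomp}.

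\textbf{Step 1: pointwise identity.} Fix $f\in L^p(\R^n)$ and set $g:=\Ma f$. By Lemma~\ref{lem:isom}, $g\in L^p(\R^n)$ with $\|g\|_{L^p}=\|f\|_{L^p}$. Since $f$ is locally integrable, each $D_k^\alpha(f)=\Mai D_k(g)$ is well defined. Because $\Mai$ is multiplication by a unimodular factor, we get $|D_k^\alpha(f)(x)|=|D_k(g)(x)|$ for almost every $x$ and every $k$. Intersecting countably many full-measure sets, we obtain almost everywhere
$$S_{\mathrm{dy}}^\alpha(f)=\Bigl(\sum_{k\in\mathbb Z}|D_k(g)|^2\Bigr)^{1/2}=:S_{\mathrm{dy}}(g).$$
If one wants to be careful about the infinite sum, truncate to $|k|\le N$ and pass to the limit by monotone convergence, exactly as in the proof of Theorem~\ref{thm:rough-square-transport}.

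\textbf{Step 2: invoke the classical theorem.} The classical dyadic square-function theorem states that for $1<p<\infty$ and $g\in L^p(\R^n)$,
$$c_{p,n}^{-1}\|g\|_{L^p}\le\|S_{\mathrm{dy}}(g)\|_{L^p}\le c_{p,n}\|g\|_{L^p}.$$
The lower bound holds for $g\in L^p$ because $E_k g\to 0$ as $k\to-\infty$: the averages over large cubes tend to zero for $p<\infty$. Hence $g=\sum_k D_k g$ with no constant term, which is the same fact used in Theorem~\ref{thm:decomp} in the $L^2$ case. Substituting $\|S_{\mathrm{dy}}(g)\|_{L^p}=\|S_{\mathrm{dy}}^\alpha(f)\|_{L^p}$ and $\|g\|_{L^p}=\|f\|_{L^p}$ gives both inequalities with the identical constant $c_{p,n}$.

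\textbf{Main obstacle.} The only delicate point is the lower bound. One must make sure the classical reverse inequality is applied in its correct form, namely for $g\in L^p$ and not for general locally integrable $g$, where a nonzero limit of $E_k g$ as $k\to-\infty$ could appear. Since $g=\Ma f\in L^p$, this is automatic. Also, the conjugation does not alter the filtration: $E_k^\alpha$ is not a conditional expectation of $f$ itself, but we never need that. Everything is computed on the $g$ side, so no further work is required.
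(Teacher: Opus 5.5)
Your proposal is correct and follows the paper's proof essentially verbatim. You set $g=M_{\alpha}f$, use the unimodularity of $M_{\alpha}^{-1}$ to get the pointwise identity $S_{\mathrm{dy}}^{\alpha}(f)=S_{\mathrm{dy}}(g)$, apply the classical dyadic square-function theorem to $g$, and transfer back via $\|g\|_{L^p}=\|f\|_{L^p}$. Your added remarks on truncation with monotone convergence, and on $E_k g\to 0$ as $k\to-\infty$ for $g\in L^p$ (needed for the lower bound), are sound details that the paper leaves implicit.
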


\begin{proof}
Fix $f\in L^p(\R^n)$ and set $g:=\Ma f.$ Since $\Ma$ is an isometry on $L^p(\R^n)$, we have
$$\|g\|_{L^p(\R^n)}=\|f\|_{L^p(\R^n)}.$$
By definition of the twisted martingale difference operators,
$$
D_k^\alpha(f)=\Mai D_k(\Ma f)=\Mai D_k(g).
$$
Because $\Mai$ is multiplication by a unimodular factor, we obtain the pointwise identity
$$|D_k^\alpha(f)(x)|=|D_k(g)(x)|\qquad\text{for almost every }x\in\R^n.$$
Therefore,
$$S_{\mathrm{dy}}^\alpha(f)(x)
=\left(\sum_{k\in\mathbb Z}|D_k^\alpha(f)(x)|^2\right)^{1/2}=\left(\sum_{k\in\mathbb Z}|D_k(g)(x)|^2\right)^{1/2}=S_{\mathrm{dy}}(g)(x)$$
for almost every $x\in\R^n$, where
$$S_{\mathrm{dy}}(g):=\left(\sum_{k\in\mathbb Z}|D_k(g)|^2\right)^{1/2}$$
denotes the classical dyadic square function.

Taking $L^p$ norms, we get
$$\|S_{\mathrm{dy}}^\alpha(f)\|_{L^p(\R^n)}=\|S_{\mathrm{dy}}(g)\|_{L^p(\R^n)}.$$
Now apply the classical dyadic square-function theorem to $g$:
$$\frac1{c_{p,n}}\|g\|_{L^p(\R^n)}\le\|S_{\mathrm{dy}}(g)\|_{L^p(\R^n)}\le c_{p,n}\|g\|_{L^p(\R^n)}.$$
Substituting $g=\Ma f$ and using the $L^p$-isometry of $\Ma$, we obtain
$$
\frac1{c_{p,n}}\|f\|_{L^p(\R^n)}=
\frac1{c_{p,n}}\|g\|_{L^p(\R^n)}\le\|S_{\mathrm{dy}}(g)\|_{L^p(\R^n)}=\|S_{\mathrm{dy}}^\alpha(f)\|_{L^p(\R^n)}$$
and
$$\|S_{\mathrm{dy}}^\alpha(f)\|_{L^p(\R^n)}
=\|S_{\mathrm{dy}}(g)\|_{L^p(\R^n)}\le c_{p,n}\|g\|_{L^p(\R^n)}=c_{p,n}\|f\|_{L^p(\R^n)}.
$$
Combining the two inequalities proves
$$\frac1{c_{p,n}}\|f\|_{L^p(\R^n)}\le\|S_{\mathrm{dy}}^\alpha(f)\|_{L^p(\R^n)}\le c_{p,n}\|f\|_{L^p(\R^n)}.$$
Since the proof is obtained by the exact pointwise identity
$$S_{\mathrm{dy}}^\alpha(f)=S_{\mathrm{dy}}(\Ma f),$$
the constant is the same as in the classical dyadic square-function theorem. This completes the proof.
\end{proof}

\begin{theorem}[\textbf{Mixed FrFT--dyadic almost orthogonality}]\label{thm:mixed}
Let $\Delta_j$ be the classical Littlewood--Paley operators, define
$$\Delta_j^\alpha=\Mai\Delta_j\Ma.$$
Then for all $j,k\in\mathbb Z$, we have
$$\|D_k^\alpha\Delta_j^\alpha\|_{L^2\to L^2}=\|\Delta_j^\alpha D_k^\alpha\|_{L^2\to L^2}\le C2^{-\frac12|j-k|}.
$$
\end{theorem}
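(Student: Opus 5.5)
The plan is to reduce the statement to the classical mixed dyadic--Littlewood--Paley almost orthogonality estimate by chirp conjugation, and then to supply that classical estimate. Conjugations compose:
$$D_k^\alpha\Delta_j^\alpha=\Mai D_k\Ma\Mai\Delta_j\Ma=\Mai (D_k\Delta_j)\Ma,\qquad \Delta_j^\alpha D_k^\alpha=\Mai(\Delta_jD_k)\Ma .$$
Since $\Ma$ is unitary on $L^2$ (Lemma~\ref{lem:isometry}), this gives $\|D_k^\alpha\Delta_j^\alpha\|_{L^2\to L^2}=\|D_k\Delta_j\|_{L^2\to L^2}$, and likewise for the reversed product. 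Next, $D_k$ is self-adjoint by Proposition~\ref{prop:dyadicbasic}, and $\Delta_j$ is self-adjoint when $\varphi$ is real-valued; I will assume this, since otherwise the adjoint has symbol $\overline{\varphi}$, which satisfies the same bounds. Hence $(D_k\Delta_j)^\ast=\Delta_jD_k$, and the two norms coincide because an operator and its adjoint have the same norm. It therefore suffices to prove the classical bound $\|D_k\Delta_j\|_{L^2\to L^2}\le C2^{-|j-k|/2}$. I fix the convention that $D_k$ lives at spatial scale $2^{-k}$, so it pairs with frequency $2^k$; if the paper's indexing differs, this only shifts $j-k$ by a constant.

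\textbf{Case $j\ge k$: high frequency against coarse averaging.} Write $D_k=E_{k}-E_{k-1}$ (or its index-shifted version) and bound $\|E_m\Delta_j\|$ for $m\le j$. The function $\Delta_jf$ has frequencies of size about $2^j$. Averaging it over a cube $Q$ of side $2^{-m}$ sees only the part of the function near $\partial Q$. Concretely, $E_m\Delta_jf=\sum_Q \langle \Delta_j f,|Q|^{-1}\chi_Q\rangle\chi_Q$, and $\Delta_j^\ast(|Q|^{-1}\chi_Q)=|Q|^{-1}\widetilde\psi_j*\chi_Q$ with $\widetilde\psi_j$ of mean zero. That convolution is concentrated in a $2^{-j}$-neighbourhood of $\partial Q$, which has measure about $2^{-j}2^{-m(n-1)}$, and it is bounded by $|Q|^{-1}$ there. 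Applying Cauchy--Schwarz on each cube and summing, using the bounded overlap of the neighbourhoods of the boundaries of the cubes, gives $\|E_m\Delta_j\|\lesssim 2^{-(j-m)/2}$.

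\textbf{Case $j<k$: low frequency against fine martingale differences.} Use the adjoint, $\|D_k\Delta_j\|=\|\Delta_jD_k\|$. The function $D_kf$ has mean zero on each cube of side $2^{-k+1}$, while $\Delta_j$ has a smooth kernel at scale $2^{-j}\gg 2^{-k}$. Subtracting the kernel's value at the cube's centre and using $|\nabla\psi_j|\lesssim 2^{j(n+1)}$ with the decay \eqref{eq:MVZ} gives a gain of $2^{j-k}$ per cube. Schur's test then yields $\|\Delta_jD_k\|\lesssim 2^{-(k-j)}\le 2^{-|j-k|/2}$.

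The main obstacle is the first case. The boundary-layer estimate must be done carefully with the Schwartz tails of $\psi_j$ rather than a compactly supported kernel, and the Cauchy--Schwarz step must be organised as a Schur or $TT^\ast$ argument over cubes so that the $2^{-(j-m)/2}$ gain is not lost. I would first try to handle it by splitting $\chi_Q$ into an interior part and a boundary strip of width $2^{-j}$. On the interior part the mean-zero kernel gives rapid decay, and on the strip the small measure gives the square-root gain.
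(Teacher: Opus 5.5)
Your proof is correct. Its first step matches the paper's proof. The paper also writes $D_k^\alpha\Delta_j^\alpha=\Mai D_k\Delta_j\Ma$ and $\Delta_j^\alpha D_k^\alpha=\Mai\Delta_jD_k\Ma$, and uses unitarity of $\Ma$. It then quotes the classical mixed dyadic--Littlewood--Paley bound as a black box. It also asserts $\|D_k^\alpha\Delta_j^\alpha\|=\|\Delta_j^\alpha D_k^\alpha\|$, although its argument only gives two separate upper bounds.

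You go further in two respects.

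First, you derive the equality from $(D_k\Delta_j)^\ast=\Delta_jD_k$, which needs $\varphi$ real. It in fact holds for complex $\varphi$ too. The map $Jf(x):=\overline{f(-x)}$ satisfies $JD_kJ=D_k$ and $J\Delta_jJ=\Delta_j^\ast$, so $\|\Delta_jD_k\|=\|D_k\Delta_j^\ast\|=\|D_k\Delta_j\|$.

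Second, you prove the classical estimate itself, and both cases work as sketched. For $j<k$, the cancellation of $D_kf$ on cubes of side $2^{-k+1}$ against $\nabla\psi_j$ even gives $2^{-(k-j)}$. For $j\ge k$, the boundary-layer step is easier than you fear. Set $w_Q(x):=(1+2^j\,\mathrm{dist}(x,\partial Q))^{-N}$. Then $|\widetilde\psi_j*\chi_Q|\lesssim w_Q$, using the mean zero of $\widetilde\psi_j$ for $x\in Q$. Also $\int w_Q\lesssim 2^{-j}2^{-m(n-1)}$, and $\sum_Q w_Q\lesssim 1$ uniformly. So the weighted Cauchy--Schwarz bound $|\langle f,\widetilde\psi_j*\chi_Q\rangle|^2\lesssim \int w_Q\int |f|^2w_Q$ already gives $\|E_m\Delta_j\|\lesssim 2^{-(j-m)/2}$, with no $TT^\ast$ organisation needed.

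The paper's route is shorter and inherits the classical constant verbatim. Yours is self-contained and shows what is really used: the rapid decay and vanishing mean of the kernel of $\Delta_j$ (guaranteed here by $\varphi\in C_c^\infty(\R^n\setminus\{0\})$), together with the cancellation of $D_kf$.
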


\begin{proof}
Recall that
$$D_k^\alpha=\Mai D_k\Ma,\qquad\Delta_j^\alpha=\Mai\Delta_j\Ma.$$
We first compute the composition $D_k^\alpha\Delta_j^\alpha$:
$$D_k^\alpha\Delta_j^\alpha=(\Mai D_k\Ma)(\Mai\Delta_j\Ma).$$
Since $\Ma\Mai=\mathrm{Id}$, this simplifies to
$$D_k^\alpha\Delta_j^\alpha=\Mai D_k\Delta_j\Ma.$$
Similarly,
$$\Delta_j^\alpha D_k^\alpha=(\Mai\Delta_j\Ma)(\Mai D_k\Ma)=\Mai\Delta_j D_k\Ma.$$

Now let $A$ be any bounded operator on $L^2(\R^n)$. Since $\Ma$ is unitary on $L^2(\R^n)$, we have
$$\|\Mai A\Ma\|_{L^2\to L^2}=\|A\|_{L^2\to L^2}.$$
Applying this with $A=D_k\Delta_j$, we obtain
$$\|D_k^\alpha\Delta_j^\alpha\|_{L^2\to L^2}=\|\Mai D_k\Delta_j\Ma\|_{L^2\to L^2}=\|D_k\Delta_j\|_{L^2\to L^2}.$$
Likewise, with $A=\Delta_j D_k$, we get
$$\|\Delta_j^\alpha D_k^\alpha\|_{L^2\to L^2}=\|\Mai \Delta_j D_k\Ma\|_{L^2\to L^2}=\|\Delta_j D_k\|_{L^2\to L^2}.$$
By the classical mixed dyadic--Littlewood--Paley almost orthogonality estimate, we have
$$
\|D_k\Delta_j\|_{L^2\to L^2}\le C2^{-\frac12|j-k|}\qquad\text{and}\qquad\|\Delta_j D_k\|_{L^2\to L^2}\le C2^{-\frac12|j-k|}.$$
Therefore, we have
$$\|D_k^\alpha\Delta_j^\alpha\|_{L^2\to L^2}\le C2^{-\frac12|j-k|}$$
and
$$\|\Delta_j^\alpha D_k^\alpha\|_{L^2\to L^2}\le C2^{-\frac12|j-k|}.$$
Hence, we have
$$\|D_k^\alpha\Delta_j^\alpha\|_{L^2\to L^2}=\|\Delta_j^\alpha D_k^\alpha\|_{L^2\to L^2}\le C2^{-\frac12|j-k|}.$$
This proves the theorem.
\end{proof}

\section{Backbone and reconstructibility}

\begin{proposition}[\textbf{Results of multiplier and dyadic modules}]\label{prop:multiplier and dyadic modules}
Let $1\le p\le \infty$ and $1\le s\le \infty$. Let $T$ be a classical linear operator bounded on
$L^{p,s}(\R^n)$; in particular, $T$ may be a Fourier multiplier, a low-frequency block, or a
dyadic block. Define
$$T_\alpha:=\Mai T\Ma.$$
Then $T_\alpha$ is bounded on $L^{p,s}(\R^n)$ and
$$\|T_\alpha\|_{L^{p,s}\to L^{p,s}}=\|T\|_{L^{p,s}\to L^{p,s}}.$$
In particular, $T_\alpha$ has the same operator norm as $T$ on the corresponding Lebesgue and Lorentz scales.
\end{proposition}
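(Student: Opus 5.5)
The plan is to show that $\Ma$ and $\Mai$ are isometries of the Lorentz space $L^{p,s}(\R^n)$, and then to transfer the bound by conjugation exactly as in Proposition~\ref{prop:family} and Lemma~\ref{lem:conjprin}.

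\textbf{Step 1: $\Ma$ preserves the Lorentz quasinorm.} The quasinorm of $L^{p,s}$ depends on $f$ only through its distribution function
$$d_f(\lambda)=|\{x:|f(x)|>\lambda\}|,$$
equivalently through the decreasing rearrangement $f^\ast$. By Lemma~\ref{lem:isom}, $|\Ma f(x)|=|f(x)|$ for almost every $x$. Hence $d_{\Ma f}=d_f$ and $(\Ma f)^\ast=f^\ast$, so
$$\|\Ma f\|_{L^{p,s}}=\|f\|_{L^{p,s}}.$$
The same holds for $\Mai$, which is multiplication by the conjugate unimodular phase. Thus $\Ma$ is a bijective isometry of $L^{p,s}$ with inverse $\Mai$. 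This covers all $1\le p\le\infty$, $1\le s\le\infty$, including the endpoint cases $L^{p,\infty}$ and $L^{\infty,\infty}=L^\infty$, since every case is defined through $f^\ast$.

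\textbf{Step 2: upper bound.} For $f\in L^{p,s}$, put $g=\Ma f$, which lies in $L^{p,s}$ by Step 1. Then
$$\|T_\alpha f\|_{L^{p,s}}=\|\Mai T g\|_{L^{p,s}}=\|Tg\|_{L^{p,s}}\le \|T\|\,\|g\|_{L^{p,s}}=\|T\|\,\|f\|_{L^{p,s}}.$$
This gives $\|T_\alpha\|_{L^{p,s}\to L^{p,s}}\le\|T\|_{L^{p,s}\to L^{p,s}}$.

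\textbf{Step 3: reverse inequality.} Write $T=\Ma T_\alpha\Mai$ and apply the argument of Step 2 with the roles of $\Ma$ and $\Mai$ swapped (equivalently, with $-\alpha$ replacing the chirp). This yields $\|T\|\le\|T_\alpha\|$, and hence the two operator norms are equal.

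\textbf{Main obstacle.} The only delicate point is a domain issue: $T$ may be defined a priori only on a dense class, for instance Schwartz functions for a Fourier multiplier, when $s=\infty$ or $p=\infty$. I would address this by noting that $\Ma$ maps $\mathcal S$ onto $\mathcal S$ (Lemma~\ref{lem:conjprin}(1)). Consequently $T_\alpha$ is defined on exactly the class $\Mai(\mathrm{dom}\,T)$, and the identity $|T_\alpha f|=|T(\Ma f)|$ holds pointwise on that class. Any extension of $T$ by density or by duality then transfers verbatim, so the norm identity holds for the extended operators as well.
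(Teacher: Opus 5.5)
Your proposal is correct and matches the paper's proof: the paper also notes that $\Ma$ and $\Mai$ preserve distribution functions, so both are isometries of $L^{p,s}$. It then derives $\|T_\alpha\|\le\|T\|$ from $\|\Mai T(\Ma f)\|_{L^{p,s}}=\|T(\Ma f)\|_{L^{p,s}}$ and the reverse inequality from $T=\Ma T_\alpha\Mai$, exactly as you do, while your extra remark on the domain of $T$ is a harmless refinement the paper does not address.
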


\begin{proof}
Since
$$
\Ma f(x)=e^{i\pi |x|^2\cot\alpha}f(x),
\qquad
\Mai f(x)=e^{-i\pi |x|^2\cot\alpha}f(x),
$$
we have that for almost every $x\in\R^n$,
$$
|\Ma f(x)|=|f(x)|
\quad \text{and}\quad
|\Mai f(x)|=|f(x)|.
$$
Hence $\Ma$ and $\Mai$ preserve distribution functions, and therefore
they are isometries on every $L^{p,s}(\R^n)$:
$$
\|\Ma f\|_{L^{p,s}}=\|f\|_{L^{p,s}},
\qquad
\|\Mai f\|_{L^{p,s}}=\|f\|_{L^{p,s}}.
$$

Now let $f\in L^{p,s}(\R^n)$. By definition,
$$
T_\alpha f=\Mai T(\Ma f).
$$
Therefore, we have
$$
\|T_\alpha f\|_{L^{p,s}}=\|\Mai T(\Ma f)\|_{L^{p,s}}=\|T(\Ma f)\|_{L^{p,s}}
\le
\|T\|_{L^{p,s}\to L^{p,s}}\|\Ma f\|_{L^{p,s}}=\|T\|_{L^{p,s}\to L^{p,s}}\|f\|_{L^{p,s}}.
$$
Thus
$$
\|T_\alpha\|_{L^{p,s}\to L^{p,s}}
\le
\|T\|_{L^{p,s}\to L^{p,s}}.
$$

Conversely, since
$$
T=\Ma T_\alpha \Mai,
$$
the same argument gives
$$
\|T\|_{L^{p,s}\to L^{p,s}}
\le
\|T_\alpha\|_{L^{p,s}\to L^{p,s}}.
$$
Hence, we have
$$
\|T_\alpha\|_{L^{p,s}\to L^{p,s}}
=
\|T\|_{L^{p,s}\to L^{p,s}}.
$$
This proves the proposition.
\end{proof}

\begin{theorem}[\textbf{Reconstructible FrFT representation}]\label{thm:reconstructible-frft}\label{thm:reconstruct}
Assume that the classical dyadic pair $(\Phi,\Psi)$ satisfies the inhomogeneous Calder\'on
reproducing formula. Then for every tempered distribution $u$,
$$
S_{0,\alpha}u+\sum_{j\ge 1}\Delta_{j,\alpha}u=u
\qquad \text{in }\mathcal S'(\R^n).
$$
If one works in the homogeneous quotient $\mathcal S'(\R^n)/\mathcal P_\alpha$, then
$$
\sum_{j\in\mathbb Z}\Delta_{j,\alpha}u=u.
$$
\end{theorem}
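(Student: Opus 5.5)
The proof is a pure conjugation argument. We set $v:=\Ma u$ and transport the classical Calder\'on reproducing formula through $\Ma$. Here $S_{0,\alpha}:=\Mai S_0\Ma$ and $\Delta_{j,\alpha}:=\Mai\Delta_j\Ma$ are built from the classical pair $(\Phi,\Psi)$.

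\textbf{Step 1: transport the distribution.} By Lemma~\ref{lem:conjprin}(1), $\Ma$ and $\Mai$ map $\mathcal S'(\R^n)$ onto itself. They are continuous for the weak-$*$ topology, since $\langle \Ma u,\phi\rangle=\langle u,\Ma\phi\rangle$ and $\Ma$ is continuous on $\mathcal S(\R^n)$. Thus $v:=\Ma u\in\mathcal S'(\R^n)$.

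\textbf{Step 2: apply the classical identity.} The inhomogeneous Calder\'on formula for $(\Phi,\Psi)$ gives
$$S_0v+\sum_{j\ge1}\Delta_j v=v\qquad\text{in }\mathcal S'(\R^n).$$
Equivalently, the partial sums $S_0v+\sum_{1\le j\le N}\Delta_jv$ converge to $v$ weak-$*$.

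\textbf{Step 3: pull back.} We apply $\Mai$ to the partial sums. Linearity gives
$$\Mai\Bigl(S_0v+\sum_{1\le j\le N}\Delta_jv\Bigr)=S_{0,\alpha}u+\sum_{1\le j\le N}\Delta_{j,\alpha}u.$$
Weak-$*$ continuity of $\Mai$ then yields convergence to $\Mai v=u$.

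\textbf{Step 4: homogeneous case.} Here we use the classical fact that $\sum_{j\in\Z}\Delta_j v$ converges to $v$ in $\mathcal S'(\R^n)/\calP$. Concretely, there are polynomials $P_N$ with $\sum_{|j|\le N}\Delta_jv-v+P_N\to0$ in $\mathcal S'(\R^n)$. Applying $\Mai$, and noting that $\Mai P_N\in\Palpha$ by definition of the chirped-polynomial class, we get convergence of $\sum_{|j|\le N}\Delta_{j,\alpha}u$ to $u$ modulo $\Palpha$. To make the quotient statement precise, observe that $\Ma$ is a linear bijection of $\mathcal S'(\R^n)$ carrying $\Palpha$ onto $\calP$. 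It therefore induces an isomorphism $\mathcal S'(\R^n)/\Palpha\cong\mathcal S'(\R^n)/\calP$, compatible with the quotient topologies, and the identity transfers.

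\textbf{Main obstacle.} The only delicate point is this last step. Convergence modulo polynomials must be interpreted correctly: either as convergence in the dual of the space $\mathcal S_\infty$ of Schwartz functions with all moments vanishing, or via the correcting polynomials $P_N$. One must also check that $\Ma$ does \emph{not} preserve $\mathcal S_\infty$ itself, because moments are not chirp invariant. For that reason I would phrase the quotient convergence through the explicit polynomial corrections $P_N$ and the bijection $\calP\leftrightarrow\Palpha$, rather than through a duality with $\mathcal S_\infty$. Everything else is a formal consequence of Lemma~\ref{lem:conjprin}.
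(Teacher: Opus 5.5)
Your proof is correct and takes the same route as the paper: set $v=\Ma u$, apply the classical inhomogeneous and homogeneous Calder\'on formulas to $v$, and pull back through $\Mai$, which maps $\calP$ bijectively onto $\Palpha$. Your extra care simply makes rigorous what the paper does by ``applying $\Mai$ to both sides'': you use weak-$*$ continuity of $\Mai$ to pass limits through the partial sums, and you phrase the homogeneous case through explicit polynomial corrections $P_N$, since $\Ma$ does not preserve $\mathcal S_\infty$.
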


\begin{proof}
Recall that
$$
S_{0,\alpha}:=\Mai S_0\Ma,
\qquad
\Delta_{j,\alpha}:=\Mai\Delta_j\Ma.
$$
Let $v:=\Ma u.$ Since $\Ma$ is multiplication by a smooth unimodular chirp, it preserves $\mathcal S(\R^n)$ and
hence, by duality, also preserves $\mathcal S'(\R^n)$. Thus $v\in\mathcal S'(\R^n)$.

By the classical inhomogeneous Calder\'on reproducing formula,
$$
S_0v+\sum_{j\ge 1}\Delta_j v=v
\qquad \text{in }\mathcal S'(\R^n).
$$
Applying $\Mai$ to both sides and using linearity, we obtain
$$
\Mai S_0\Ma\,u+\sum_{j\ge 1}\Mai\Delta_j\Ma\,u=u,
$$
that is,
$$
S_{0,\alpha}u+\sum_{j\ge 1}\Delta_{j,\alpha}u=u
\qquad \text{in }\mathcal S'(\R^n).
$$
For the homogeneous statement, the classical Calder\'on formula holds modulo polynomials:
$$
\sum_{j\in\mathbb Z}\Delta_j v=v
\qquad \text{in }\mathcal S'(\R^n)/\mathcal P.
$$
Applying this to $v=\Ma u$ and then conjugating by $\Mai$, we obtain
$$
\sum_{j\in\mathbb Z}\Delta_{j,\alpha}u=u
\qquad \text{in }\mathcal S'(\R^n)/\Mai\mathcal P.
$$

Define
$$
\mathcal P_\alpha:=\Mai\mathcal P.
$$
Then, we have
$$
\sum_{j\in\mathbb Z}\Delta_{j,\alpha}u=u
\qquad \text{in }\mathcal S'(\R^n)/\mathcal P_\alpha.
$$
Thus, the FrFT component family reconstructs the input in the inhomogeneous setting, and
up to the natural polynomial ambiguity in the homogeneous setting. This proves the
theorem.
\end{proof}

\section{FrFT pullback spaces, Riesz--Bessel operators, and supporting estimates}

\begin{definition}[\textbf{Pullback spaces and core operators}]
For $1\le r\le\infty$, define
$$
L^r_{\alpha}(\R^n)
:=
\bigl\{f\in\mathcal S'(\R^n):F_{\alpha}f\in L^r(\R^n)\bigr\},
\qquad
\|f\|_{L^r_\alpha}:=\|F_\alpha f\|_{L^r}.
$$
For $1\le r<\infty$, define
$$
L^{r,\infty}_{\alpha}(\R^n)
:=
\bigl\{f\in\mathcal S'(\R^n):F_{\alpha}f\in L^{r,\infty}(\R^n)\bigr\},
\qquad
\|f\|_{L^{r,\infty}_\alpha}:=\|F_\alpha f\|_{L^{r,\infty}}.
$$
Define the FrFT-side fractional Laplacian, Riesz potential, and Bessel potential by
$$
(-\Delta_{\alpha})^{z/2}:=F_\alpha^{-1}(-\Delta)^{z/2}F_\alpha,
\qquad
I_{s,\alpha}:=F_\alpha^{-1}I_sF_\alpha,
\qquad
J_{\sigma,\alpha}:=F_\alpha^{-1}J_\sigma F_\alpha.
$$
\end{definition}

\begin{theorem}[\textbf{FrFT principle on pullback scales}]\label{thm:pullback-scales}
Let $1\le p,q\le\infty$. If a classical operator $S$ is bounded from $L^p(\R^n)$ to $L^q(\R^n)$, or from $L^p(\R^n)$ to $L^{q,\infty}(\R^n)$, then
$$
S_{\alpha}:=F_{\alpha}^{-1}SF_{\alpha}
$$
is bounded from $L^p_{\alpha}(\R^n)$ to $L^q_{\alpha}(\R^n)$, or from $L^p_{\alpha}(\R^n)$ to $L^{q,\infty}_{\alpha}(\R^n)$, with the same operator norm.
\end{theorem}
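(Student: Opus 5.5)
The statement is a pure transport of norms through the definition of the pullback spaces, so the plan is to unwind the definitions and use that $F_\alpha$ is a bijection of $\mathcal S'(\R^n)$. First I would record that $F_\alpha$, and hence $F_\alpha^{-1}$, is a linear bijection of $\mathcal S'(\R^n)$. By Lemma~\ref{lem:factor}, $F_\alpha$ is a composition of three maps. The chirp $\Ma$ preserves $\mathcal S'$ by Lemma~\ref{lem:conjprin}(1). The classical Fourier transform is an automorphism of $\mathcal S'$. The dilation $\xi\mapsto \xi/\sin\alpha$ together with a further multiplication by $e^{i\pi|\xi|^2\cot\alpha}$ and a nonzero constant is again invertible on $\mathcal S'$. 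Consequently $S_\alpha=F_\alpha^{-1}SF_\alpha$ is well defined on $L^p_\alpha$ as soon as $S$ is defined on $L^p$: for $f\in L^p_\alpha$ we have $F_\alpha f\in L^p$, so $S F_\alpha f\in L^q\subset\mathcal S'$ (or $L^{q,\infty}\subset\mathcal S'$), and $F_\alpha^{-1}$ may be applied.

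The core computation is then immediate. For $f\in L^p_\alpha$,
$$
\|S_\alpha f\|_{L^q_\alpha}=\|F_\alpha F_\alpha^{-1}SF_\alpha f\|_{L^q}=\|S(F_\alpha f)\|_{L^q}\le \|S\|_{L^p\to L^q}\|F_\alpha f\|_{L^p}=\|S\|_{L^p\to L^q}\|f\|_{L^p_\alpha}.
$$
The weak-type case is identical with $L^{q,\infty}$ in place of $L^q$. This gives $\|S_\alpha\|\le\|S\|$. For the reverse inequality, I would use that $F_\alpha:L^p_\alpha\to L^p$ is a surjective isometry by construction: every $g\in L^p$ equals $F_\alpha f$ with $f=F_\alpha^{-1}g\in L^p_\alpha$. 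Taking the supremum over $g$ in the identity $\|S g\|_{L^q}=\|S_\alpha F_\alpha^{-1}g\|_{L^q_\alpha}$ then yields $\|S\|\le\|S_\alpha\|$, so the two operator norms coincide.

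The only genuine obstacle is the well-definedness issue in the first step. The paper writes both $\Fa$ and $F_\alpha$; I will identify them. For the argument one needs $F_\alpha$ extended from $\mathcal S$ to $\mathcal S'$ and invertible there, and Lemma~\ref{lem:factor} together with Lemma~\ref{lem:conjprin}(1) supplies exactly this via the factorization. A second subtlety concerns the case $p=\infty$ or $q=\infty$: if $S$ is only defined on a dense subclass of $L^p$, then $S_\alpha$ is defined on the corresponding preimage, and the norm identity holds on that domain. I would state this caveat and otherwise take $S$ to be defined on all of $L^p$, as in the hypothesis.
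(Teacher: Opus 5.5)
Your proposal is correct and follows the paper's proof: since $\|f\|_{L^r_\alpha}=\|F_\alpha f\|_{L^r}$ by definition, $F_\alpha$ is a surjective isometry onto $L^r$ (and onto $L^{r,\infty}$), so $\|S_\alpha f\|_{L^q_\alpha}=\|S(F_\alpha f)\|_{L^q}$, and the reverse inequality comes from $S=F_\alpha S_\alpha F_\alpha^{-1}$; your well-definedness step via the factorization of Lemma~\ref{lem:factor} is a useful addition that the paper omits. One small correction to that step: the inclusion $L^{q,\infty}\subset\mathcal S'(\R^n)$ fails for $q=1$ (e.g.\ $|x|^{-n}\in L^{1,\infty}$ is not locally integrable), so in the weak-type case with $q=1$ you must additionally assume $S F_\alpha f\in\mathcal S'(\R^n)$ before applying $F_\alpha^{-1}$ --- a point the paper also passes over.
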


\begin{proof}
By definition of the pullback norms,
$$
\|f\|_{L^r_\alpha}=\|F_\alpha f\|_{L^r},
\qquad
\|f\|_{L^{r,\infty}_\alpha}=\|F_\alpha f\|_{L^{r,\infty}}.
$$
Thus, $F_\alpha$ is an isometric isomorphism from $L^r_\alpha$ onto $L^r$, and from
$L^{r,\infty}_\alpha$ onto $L^{r,\infty}$.

Let $f\in L^p_\alpha(\R^n)$ and set
$$
g:=F_\alpha f.
$$
Then, we have
$$
F_\alpha(S_\alpha f)=SF_\alpha f=Sg.
$$
Hence, in the strong-type case,
$$
\|S_\alpha f\|_{L^q_\alpha}
=
\|F_\alpha(S_\alpha f)\|_{L^q}
=
\|Sg\|_{L^q}
\le
\|S\|_{L^p\to L^q}\|g\|_{L^p}
=
\|S\|_{L^p\to L^q}\|f\|_{L^p_\alpha}.
$$
Likewise, in the weak-type case,
$$
\|S_\alpha f\|_{L^{q,\infty}_\alpha}
=
\|F_\alpha(S_\alpha f)\|_{L^{q,\infty}}
=
\|Sg\|_{L^{q,\infty}}
\le
\|S\|_{L^p\to L^{q,\infty}}\|g\|_{L^p}
=
\|S\|_{L^p\to L^{q,\infty}}\|f\|_{L^p_\alpha}.
$$
The reverse norm inequality follows from
$$
S=F_\alpha S_\alpha F_\alpha^{-1}
$$
by the same argument. Therefore the FrFT operator has the same norm.
\end{proof}

\begin{proposition}[Semigroup and commutation identities]\label{prop:semigroup-commutation}
Whenever all quantities are meaningful,
$$
I_{s,\alpha}I_{t,\alpha}=I_{s+t,\alpha},\qquad
J_{\sigma,\alpha}J_{\tau,\alpha}=J_{\sigma+\tau,\alpha},\qquad
(-\Delta_{\alpha})^{z/2}(-\Delta_{\alpha})^{w/2}=(-\Delta_{\alpha})^{(z+w)/2},
$$
and if $\Re s>2\Re z$, then
$$
I_{s,\alpha}(-\Delta_{\alpha})^z
=
(-\Delta_{\alpha})^zI_{s,\alpha}
=
I_{s-2z,\alpha}.
$$
\end{proposition}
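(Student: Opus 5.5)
The plan is to reduce everything to the classical operators by conjugation through $F_\alpha$. By definition, $I_{s,\alpha}=F_\alpha^{-1}I_sF_\alpha$, $J_{\sigma,\alpha}=F_\alpha^{-1}J_\sigma F_\alpha$ and $(-\Delta_\alpha)^{z/2}=F_\alpha^{-1}(-\Delta)^{z/2}F_\alpha$, where $I_s$, $J_\sigma$, $(-\Delta)^{z/2}$ are the classical Fourier multipliers with symbols $|\xi|^{-s}$, $(1+|\xi|^2)^{-\sigma/2}$ and $|\xi|^{z}$; I will use the standard normalisation without $2\pi$ factors, consistent with $D^s$ in the paper. For any two such operators $A,B$ the interior factors cancel in a telescoping product:
$$
(F_\alpha^{-1}AF_\alpha)(F_\alpha^{-1}BF_\alpha)=F_\alpha^{-1}(AB)F_\alpha .
$$
This uses only that $F_\alpha F_\alpha^{-1}=\mathrm{Id}$ on the relevant space. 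By Lemma~\ref{lem:factor}, $F_\alpha$ is a composition of $\Ma$, a dilation, the Fourier transform and a chirp multiplication. Hence it is an isomorphism of $\mathcal S(\R^n)$ and, by duality, of $\mathcal S'(\R^n)$, just as Lemma~\ref{lem:conjprin}(1) gives for $\Ma$.

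Each identity therefore reduces to the corresponding classical one, $AB=C$. Classical multipliers compose by multiplying their symbols, and the symbol identities are
$$
|\xi|^{-s}|\xi|^{-t}=|\xi|^{-(s+t)},\qquad
(1+|\xi|^2)^{-\sigma/2}(1+|\xi|^2)^{-\tau/2}=(1+|\xi|^2)^{-(\sigma+\tau)/2},\qquad
|\xi|^{z}|\xi|^{w}=|\xi|^{z+w}.
$$
For the last identity, note that $(-\Delta_\alpha)^z$ is the case $z\mapsto 2z$ of the definition and so has symbol $|\xi|^{2z}$. Then $|\xi|^{-s}|\xi|^{2z}=|\xi|^{-(s-2z)}$, and this product is commutative. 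It follows that $I_{s,\alpha}(-\Delta_\alpha)^z$, $(-\Delta_\alpha)^zI_{s,\alpha}$ and $I_{s-2z,\alpha}$ all coincide.

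The main obstacle is the phrase ``whenever all quantities are meaningful.'' The symbols are singular at $\xi=0$, so the classical compositions are only valid on a suitable domain. For that domain I will take $\mathcal S_0$, the Schwartz functions whose Fourier transform vanishes to infinite order at the origin, or its dual modulo polynomials. On this domain the multiplication of symbols is a legitimate identity. The condition $\Re s>2\Re z$ guarantees $\Re(s-2z)>0$, so that $I_{s-2z}$ is a genuine Riesz potential of positive order and the classical identity holds there.

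To transport this domain, I note that $F_\alpha^{-1}\mathcal S_0$ is the natural FrFT-side domain. On it both sides of every identity are defined, and the classical identity for $g=F_\alpha f$ pulls back exactly. For the Bessel identity no restriction is needed, since those symbols are smooth with polynomially bounded derivatives, so the identity holds on all of $\mathcal S'$.
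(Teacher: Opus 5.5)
Your proposal is correct and follows the paper's proof: the inner factors $F_\alpha F_\alpha^{-1}$ cancel, so each identity reduces to the classical composition law for $I_s$, $J_\sigma$ and $(-\Delta)^{z/2}$, which amounts to multiplying symbols. Your explicit choice of domain ($F_\alpha^{-1}\mathcal S_0$ for the homogeneous operators, all of $\mathcal S'$ for the Bessel ones) makes the paper's ``whenever all quantities are meaningful'' precise without changing the argument; note that on $\mathcal S_0$ the symbol identity holds for all complex $s,z$, so the hypothesis $\Re s>2\Re z$ is not actually used there.
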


\begin{proof}
By definition,
$$
I_{s,\alpha}=F_\alpha^{-1}I_sF_\alpha,\qquad
J_{\sigma,\alpha}=F_\alpha^{-1}J_\sigma F_\alpha,\qquad
(-\Delta_\alpha)^{z/2}=F_\alpha^{-1}(-\Delta)^{z/2}F_\alpha.
$$
Therefore
$$
I_{s,\alpha}I_{t,\alpha}
=
F_\alpha^{-1}I_sF_\alpha F_\alpha^{-1}I_tF_\alpha
=
F_\alpha^{-1}I_sI_tF_\alpha,
$$
and similarly
$$
J_{\sigma,\alpha}J_{\tau,\alpha}
=
F_\alpha^{-1}J_\sigma J_\tau F_\alpha,
\qquad
(-\Delta_\alpha)^{z/2}(-\Delta_\alpha)^{w/2}
=
F_\alpha^{-1}(-\Delta)^{z/2}(-\Delta)^{w/2}F_\alpha.
$$
Thus the first three identities follow immediately from the corresponding classical identities
$$
I_sI_t=I_{s+t},\qquad
J_\sigma J_\tau=J_{\sigma+\tau},\qquad
(-\Delta)^{z/2}(-\Delta)^{w/2}=(-\Delta)^{(z+w)/2}.
$$

Likewise,
$$
I_{s,\alpha}(-\Delta_\alpha)^z
=
F_\alpha^{-1}I_s(-\Delta)^zF_\alpha,
\qquad
(-\Delta_\alpha)^zI_{s,\alpha}
=
F_\alpha^{-1}(-\Delta)^zI_sF_\alpha.
$$
Since, on the classical side,
$$
I_s(-\Delta)^z=(-\Delta)^zI_s=I_{s-2z}
\qquad
(\Re s>2\Re z),
$$
we obtain
$$
I_{s,\alpha}(-\Delta_\alpha)^z
=
(-\Delta_\alpha)^zI_{s,\alpha}
=
I_{s-2z,\alpha}.
$$
This proves the proposition.
\end{proof}

\begin{theorem}[\textbf{FrFT Hardy-Littlewood-Sobolev theorem and Bessel regularization}]\label{thm:frft-hls-bessel}
Let $0<s<n$.
If $1<p<q<\infty$ and
$$
\frac1p-\frac1q=\frac{s}{n},
$$
then
$$
\|I_{s,\alpha}f\|_{L^q_{\alpha}}\le C\|f\|_{L^p_{\alpha}}.
$$
The endpoint weak estimate also holds:
$$
\|I_{s,\alpha}f\|_{L^{\frac{n}{n-s},\infty}_{\alpha}}
\le
C\|f\|_{L^1_{\alpha}}.
$$
Moreover,
$$
\|J_{s,\alpha}f\|_{L^r_{\alpha}}\le\|f\|_{L^r_{\alpha}}
\qquad
\text{for every }1\le r\le\infty.
$$
\end{theorem}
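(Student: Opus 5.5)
The plan is to reduce every assertion to its classical counterpart through Theorem~\ref{thm:pullback-scales}. By definition $I_{s,\alpha}=F_\alpha^{-1}I_sF_\alpha$ and $J_{s,\alpha}=F_\alpha^{-1}J_sF_\alpha$, and $\|f\|_{L^r_\alpha}=\|F_\alpha f\|_{L^r}$. So once the classical operators $I_s$ and $J_s$ are known to be bounded between the relevant Lebesgue or weak Lebesgue spaces, Theorem~\ref{thm:pullback-scales} transfers the bounds to the pullback scales with the same norm. The proof therefore amounts to checking which classical estimates apply.

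\textbf{Strong HLS bound.} Here $I_s$ is the Riesz potential, i.e.\ convolution with $c_{n,s}|x|^{s-n}$ (equivalently the multiplier $(2\pi|\xi|)^{-s}$). The classical Hardy--Littlewood--Sobolev theorem gives $\|I_sg\|_{L^q}\le C\|g\|_{L^p}$ for $1<p<q<\infty$ with $1/p-1/q=s/n$. Taking $g=F_\alpha f$, so that $F_\alpha(I_{s,\alpha}f)=I_sg$, we get $\|I_{s,\alpha}f\|_{L^q_\alpha}=\|I_sg\|_{L^q}\le C\|g\|_{L^p}=C\|f\|_{L^p_\alpha}$. This is exactly the strong-type case of Theorem~\ref{thm:pullback-scales}.

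\textbf{Weak endpoint.} The classical estimate $\|I_sg\|_{L^{n/(n-s),\infty}}\le C\|g\|_{L^1}$ feeds into the weak-type case of Theorem~\ref{thm:pullback-scales} with $p=1$ and $q=n/(n-s)$.

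\textbf{Bessel potential.} The kernel $G_s$ of $J_s=((1+4\pi^2|\xi|^2)^{-s/2}\widehat{\cdot}\,)^\vee$ is positive and integrable, with $\|G_s\|_{L^1}=\widehat{G_s}(0)=1$. Young's inequality then gives $\|J_sg\|_{L^r}\le\|g\|_{L^r}$ for every $1\le r\le\infty$, with constant exactly $1$, and Theorem~\ref{thm:pullback-scales} again transfers this.

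The main obstacle is not analytic but definitional. We must make sure that $I_s$ is applied to $g=F_\alpha f$, which lies in $L^p$ but not necessarily in $\mathcal S$, and that $F_\alpha^{-1}I_sF_\alpha f$ is meaningful as a tempered distribution. I would first establish the estimates for $g\in\mathcal S$, or for $g$ in a dense class where $I_sg$ is given by an absolutely convergent integral. I would then extend $I_s$ by density to $L^p$ for $p<\infty$, and define $I_{s,\alpha}$ on $L^p_\alpha$ by $F_\alpha^{-1}$ of that extension. Since $F_\alpha$ is a bijection on $\mathcal S'$ (by Lemma~\ref{lem:factor} it is a composition of chirps, a Fourier transform and a dilation), the pullback definitions are consistent.

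The Bessel case at $r=\infty$ needs no density argument, since convolution with an $L^1$ kernel is defined pointwise. The other cases are handled by the density extension above.
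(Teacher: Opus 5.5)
Your proposal is correct and follows the paper's proof: both feed the classical strong and weak-endpoint Hardy--Littlewood--Sobolev bounds, together with the $L^r$-contractivity of $J_s$, into Theorem~\ref{thm:pullback-scales}. Your extra remarks supply rigor that the paper leaves implicit: the positivity and unit mass of the Bessel kernel combined with Young's inequality, and the density extension needed to make $F_\alpha^{-1}I_sF_\alpha$ meaningful on $L^p_\alpha$.
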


\begin{proof}
By definition, we have
$$
I_{s,\alpha}=F_\alpha^{-1}I_sF_\alpha,
\qquad
J_{s,\alpha}=F_\alpha^{-1}J_sF_\alpha.
$$
The classical Hardy--Littlewood--Sobolev theorem gives
$$
I_s:L^p(\R^n)\to L^q(\R^n)
\qquad
\text{if }
\frac1p-\frac1q=\frac{s}{n},
$$
and also the endpoint weak estimate
$$
I_s:L^1(\R^n)\to L^{\frac{n}{n-s},\infty}(\R^n).
$$
On the other hand, the classical Bessel potential operator satisfies
$$
\|J_s g\|_{L^r(\R^n)}\le \|g\|_{L^r(\R^n)}
\qquad
\text{for every }1\le r\le\infty.
$$
Applying Theorem~\ref{thm:pullback-scales} to these classical estimates yields
$$
\|I_{s,\alpha}f\|_{L^q_\alpha}\le C\|f\|_{L^p_\alpha},
\qquad
\|I_{s,\alpha}f\|_{L^{\frac{n}{n-s},\infty}_\alpha}\le C\|f\|_{L^1_\alpha},
$$
and
$$
\|J_{s,\alpha}f\|_{L^r_\alpha}\le \|f\|_{L^r_\alpha}
\qquad
(1\le r\le\infty).
$$
This proves the theorem.
\end{proof}

\begin{proposition}[\textbf{Operator-chain estimate for unified representation}]\label{prop:operator-chain-unified}
Let $0<s<n$ and let $1<p<q<\infty$ satisfy
$$
\frac1p-\frac1q=\frac{s}{n}.
$$
Assume that
$$
T_m:X_m\to L^p_{\alpha}(\R^n)
$$
is bounded with
$$
\|T_m f\|_{L^p_\alpha(\R^n)}
\le
C_m\|f\|_{X_m},
$$
and that
$$
A_{\alpha}:L^q_{\alpha}(\R^n)\to L^q_{\alpha}(\R^n)
$$
is bounded with
$$
\|A_\alpha h\|_{L^q_\alpha(\R^n)}
\le
C_A\|h\|_{L^q_\alpha(\R^n)}.
$$
For $\sigma>0$, define
$$
U_mf:=J_{\sigma,\alpha}A_{\alpha}I_{s,\alpha}T_mf.
$$
Then, we have
$$
\|U_mf\|_{L^q_{\alpha}(\R^n)}
\le
C\,C_m\|f\|_{X_m},
$$
where one may take $ C=C_A\,C_{n,s,p}.$
\end{proposition}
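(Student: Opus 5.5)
The plan is to follow $f$ through the four-stage chain $U_m=J_{\sigma,\alpha}A_\alpha I_{s,\alpha}T_m$ one operator at a time, reading off a norm bound at each stage from results already proved. Throughout, the pullback norms $\|\cdot\|_{L^r_\alpha}=\|F_\alpha\cdot\|_{L^r}$ make $F_\alpha$ an isometric isomorphism onto $L^r$, so each step can be checked on the classical side whenever needed.

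\textbf{Step 1.} Set $h_1:=T_mf$. By hypothesis $h_1\in L^p_\alpha(\R^n)$ and $\|h_1\|_{L^p_\alpha}\le C_m\|f\|_{X_m}$.

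\textbf{Step 2.} Set $h_2:=I_{s,\alpha}h_1$. Since $1<p<q<\infty$ and $\frac1p-\frac1q=\frac sn$, Theorem~\ref{thm:frft-hls-bessel} gives $\|h_2\|_{L^q_\alpha}\le C_{n,s,p}\|h_1\|_{L^p_\alpha}$. Here $C_{n,s,p}$ is the classical Hardy--Littlewood--Sobolev constant; it depends only on $n,s,p$, because $q$ is determined by them. Theorem~\ref{thm:pullback-scales} guarantees that this constant transfers unchanged.

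\textbf{Step 3.} Set $h_3:=A_\alpha h_2$. The boundedness hypothesis on $A_\alpha$ gives $\|h_3\|_{L^q_\alpha}\le C_A\|h_2\|_{L^q_\alpha}$.

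\textbf{Step 4.} The final stage is $U_mf=J_{\sigma,\alpha}h_3$. The Bessel bound in Theorem~\ref{thm:frft-hls-bessel}, taken with $r=q$ and order $\sigma>0$ in place of $s$, is contractive: $\|J_{\sigma,\alpha}h_3\|_{L^q_\alpha}\le\|h_3\|_{L^q_\alpha}$. On the classical side this is simply the fact that the Bessel kernel $G_\sigma$ is a positive integrable function with $\|G_\sigma\|_{L^1}=1$, combined with Young's inequality. That theorem states the bound for the order $s$, but the argument uses nothing about $s$ beyond positivity, so it applies verbatim to any $\sigma>0$.

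Chaining the four estimates gives
$$\|U_mf\|_{L^q_\alpha}\le \|h_3\|_{L^q_\alpha}\le C_A\|h_2\|_{L^q_\alpha}\le C_AC_{n,s,p}\|h_1\|_{L^p_\alpha}\le C_AC_{n,s,p}C_m\|f\|_{X_m},$$
which is the claim with $C=C_AC_{n,s,p}$.

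The only point needing care is that each composition is well defined, that is, the output of each stage lies in the domain of the next. This is handled by the identity $F_\alpha S_\alpha=SF_\alpha$: it turns every stage into the classical operator acting on $F_\alpha h_{i}\in L^r$, where the classical operators are defined on the relevant $L^r$ spaces. No deeper obstacle is expected.
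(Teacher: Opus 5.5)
Your proposal is correct and follows the paper's proof step for step. It uses the same chain $T_m\to I_{s,\alpha}\to A_\alpha\to J_{\sigma,\alpha}$, with Theorem~\ref{thm:frft-hls-bessel} giving the $L^p_\alpha\to L^q_\alpha$ bound, the hypothesis on $A_\alpha$, and contractivity of $J_{\sigma,\alpha}$ on $L^q_\alpha$, which yields $C=C_AC_{n,s,p}$. Your remark that the Bessel contraction in Theorem~\ref{thm:frft-hls-bessel} is stated for order $s$ but holds for any $\sigma>0$ (because $G_\sigma$ is positive with unit mass, so Young's inequality applies) makes explicit a point the paper passes over.
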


\begin{proof}
Let $g:=T_mf.$ Then $g\in L^p_\alpha(\R^n)$ and, by the boundedness of $T_m$, we have
$$
\|g\|_{L^p_\alpha(\R^n)}
=
\|T_mf\|_{L^p_\alpha(\R^n)}
\le
C_m\|f\|_{X_m}.
$$
By Theorem~\ref{thm:frft-hls-bessel},
since
$$
\frac1p-\frac1q=\frac{s}{n},
$$
we have
$$
\|I_{s,\alpha}g\|_{L^q_\alpha(\R^n)}
\le
C_{n,s,p}\|g\|_{L^p_\alpha(\R^n)}.
$$
Applying the bounded operator $A_\alpha$ on $L^q_\alpha(\R^n)$ yields
$$
\|A_\alpha I_{s,\alpha}g\|_{L^q_\alpha(\R^n)}
\le
C_A\|I_{s,\alpha}g\|_{L^q_\alpha(\R^n)}
\le
C_A C_{n,s,p}\|g\|_{L^p_\alpha(\R^n)}.
$$
Now apply again Theorem~\ref{thm:frft-hls-bessel}, this time to the Bessel regularization
operator at exponent $q$. Since $J_{\sigma,\alpha}$ is contractive on every $L^r_\alpha$,
in particular on $L^q_\alpha(\R^n)$, we obtain
$$
\|J_{\sigma,\alpha}A_\alpha I_{s,\alpha}g\|_{L^q_\alpha(\R^n)}
\le
\|A_\alpha I_{s,\alpha}g\|_{L^q_\alpha(\R^n)}.
$$
Therefore
$$
\|U_mf\|_{L^q_\alpha(\R^n)}
=
\|J_{\sigma,\alpha}A_\alpha I_{s,\alpha}T_mf\|_{L^q_\alpha(\R^n)}
\le
C_A C_{n,s,p}\|T_mf\|_{L^p_\alpha(\R^n)}.
$$
Using once more the boundedness of $T_m$, we conclude that
$$
\|U_mf\|_{L^q_\alpha(\R^n)}
\le
C_A C_{n,s,p} C_m \|f\|_{X_m}.
$$
Hence, the desired estimate holds with $C=C_A C_{n,s,p}.$
This proves the proposition.
\end{proof}

\section{Riesz-Bessel regularization as a supporting layer}
For robust representation one may append the FrFT Riesz and Bessel operators
$$
I_{s,\alpha}:=F_{\alpha}^{-1}\bigl((2\pi|\xi|)^{-s}F_\alpha\cdot\bigr),\qquad
J_{\sigma,\alpha}:=F_{\alpha}^{-1}\bigl((1+4\pi^2|\xi|^2)^{-\sigma/2}F_\alpha\cdot\bigr).
$$

\begin{theorem}[\textbf{Supporting regularization estimate}]\label{thm:supporting-regularization}\label{thm:hls}
Let $0<s<n$ and let $1<p<q<\infty$ satisfy
$$
\frac1p-\frac1q=\frac{s}{n}.
$$
Then, we have
$$
\|I_{s,\alpha}f\|_{L^q_\alpha}\le C\|f\|_{L^p_\alpha},
\qquad
\|J_{\sigma,\alpha}f\|_{L^p_\alpha}\le \|f\|_{L^p_\alpha}.
$$
\end{theorem}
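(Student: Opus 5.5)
The plan is to reduce Theorem~\ref{thm:supporting-regularization} to the pullback transfer principle, Theorem~\ref{thm:pullback-scales}, exactly as in Theorem~\ref{thm:frft-hls-bessel}. By definition, $I_{s,\alpha}=F_\alpha^{-1}I_sF_\alpha$ and $J_{\sigma,\alpha}=F_\alpha^{-1}J_\sigma F_\alpha$. Here $I_s$ is the classical Fourier multiplier with symbol $(2\pi|\xi|)^{-s}$, which is the Riesz potential, convolution with $c_{n,s}|x|^{s-n}$. Likewise $J_\sigma$ has symbol $(1+4\pi^2|\xi|^2)^{-\sigma/2}$, which is convolution with the Bessel kernel $G_\sigma$. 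The normalizations $2\pi|\xi|$ and $1+4\pi^2|\xi|^2$ match the convention $\widehat f(\xi)=\int e^{-2\pi i x\cdot\xi}f$ of Definition~\ref{lemma-2.1}, so these are exactly the classical operators. Since $F_\alpha$ is an isometric isomorphism from $L^r_\alpha$ onto $L^r$ by definition of the pullback norm, it suffices to establish the classical bounds $\|I_sg\|_{L^q}\le C\|g\|_{L^p}$ and $\|J_\sigma g\|_{L^p}\le\|g\|_{L^p}$.

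For $I_s$, I will invoke the classical Hardy--Littlewood--Sobolev theorem under the hypotheses $0<s<n$, $1<p<q<\infty$, $1/p-1/q=s/n$. If a self-contained argument is wanted, the route is Hedberg's splitting $|x|^{s-n}=|x|^{s-n}\chi_{|x|<\delta}+|x|^{s-n}\chi_{|x|\ge\delta}$. The local part is bounded by $C\delta^{s}Mg(x)$. The tail is bounded by $C\delta^{s-n/p}\|g\|_{L^p}$ via H\"older, which is where $p<n/s$ is used. Optimizing in $\delta$ gives $|I_sg|\lesssim (Mg)^{p/q}\|g\|_{L^p}^{1-p/q}$, and the Hardy--Littlewood maximal theorem then finishes. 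The constant $C=C_{n,s,p}$ is inherited unchanged through the transfer.

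For $J_\sigma$, the key facts are that $G_\sigma\ge 0$ and $\int G_\sigma=\widehat{G_\sigma}(0)=1$. These follow from the subordination formula
$$
G_\sigma(x)=\frac{1}{\Gamma(\sigma/2)}\int_0^\infty e^{-\pi|x|^2/\delta}e^{-\delta/(4\pi)}\delta^{(\sigma-n)/2}\,\frac{d\delta}{\delta},
$$
which is a positive superposition of Gaussians. Minkowski's integral inequality then gives $\|G_\sigma*g\|_{L^p}\le\|G_\sigma\|_{L^1}\|g\|_{L^p}=\|g\|_{L^p}$ for all $1\le p\le\infty$.

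Finally, Theorem~\ref{thm:pullback-scales} applied to $S=I_s$ with $(p,q)$, and to $S=J_\sigma$ with $(p,p)$, yields both estimates with the same constants. The only genuine obstacle is a well-definedness issue: $F_\alpha$ must map $L^p_\alpha$ into a class on which $I_s$ and $J_\sigma$ act, and conversely. I will handle this by working first with $f$ such that $F_\alpha f\in\mathcal S$, where all operators are defined and the identities are exact. I will then extend by density, using that $F_\alpha$ is a bijection onto $L^r$ by construction of the pullback space.
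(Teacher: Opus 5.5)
Your proposal is correct and is the paper's argument: both estimates are transferred through Theorem~\ref{thm:pullback-scales} from the classical Hardy--Littlewood--Sobolev inequality and the $L^p$-contractivity of the Bessel potential. Your Hedberg splitting, subordination formula, and density step only fill in details the paper cites. One small slip: the subordination formula needs the prefactor $(4\pi)^{-\sigma/2}$ in front of the integral, since as written it gives $\int G_\sigma=(4\pi)^{\sigma/2}$ rather than $\widehat{G_\sigma}(0)=1$; this does not affect the argument, because you take $\int G_\sigma=1$ from the symbol at $\xi=0$.
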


\begin{proof}
The classical Hardy--Littlewood--Sobolev theorem yields
$$
\|I_s g\|_{L^q(\R^n)}\le C\|g\|_{L^p(\R^n)}
\qquad
\text{whenever }
\frac1p-\frac1q=\frac{s}{n}.
$$
Also, the Bessel potential operator $J_\sigma$ is bounded on $L^p(\R^n)$ with
$$
\|J_\sigma g\|_{L^p(\R^n)}\le \|g\|_{L^p(\R^n)}.
$$
Since
$$
I_{s,\alpha}=F_\alpha^{-1}I_sF_\alpha,
\qquad
J_{\sigma,\alpha}=F_\alpha^{-1}J_\sigma F_\alpha,
$$
Theorem~\ref{thm:pullback-scales} applied to $I_s$ and $J_\sigma$ gives
$$
\|I_{s,\alpha}f\|_{L^q_\alpha}\le C\|f\|_{L^p_\alpha},
\qquad
\|J_{\sigma,\alpha}f\|_{L^p_\alpha}\le \|f\|_{L^p_\alpha}.
$$
This proves the theorem.
\end{proof}

\section{Fractional parameter descriptors}
The fractional order parameter $\rho$ enters the framework through two components. The first is the chirp factor in $\Ma$, and the second is the rescaling hidden in the selector family. We now isolate these two components as explicit parameter descriptors.

\begin{definition}[\textbf{Fractional structural descriptors}]\label{def:descriptors}
For $\rho\in\R\setminus 2\mathbb Z$, define
$$
\alpha(\rho):=\frac{\pi\rho}{2},
\qquad
\kappa(\rho):=\cot\alpha(\rho),
\qquad
s(\rho):=\abs{\sin\alpha(\rho)}.
$$

We call $\kappa(\rho)$ the \emph{chirp-slope descriptor} and $s(\rho)$ the \emph{scale-dilation descriptor}. We also define the scalar \emph{fractional deviation index}
$$
\mathfrak{D}(\rho):=\abs{1-s(\rho)}+\abs{\kappa(\rho)}.
$$
Finally, for $R>0$, define the effective classical bandwidth
$$
R_{\mathrm{eff}}(\rho;R):=\frac{R}{s(\rho)}.
$$
\end{definition}

\begin{remark}
The point of Definition~\ref{def:descriptors}  regard scalar index $\mathfrak D(\rho)$  as the analogue of a macroscopic regime descriptor.
\end{remark}

\begin{theorem}[\textbf{Parameter-action law for localized selectors}]\label{thm:action-law}
Let $\alpha(\rho)=\pi\rho/2\notin \pi\mathbb Z$, let $\Phi$ be a bounded fuction, and let $R>0$. Then for every $u\in\Tem(\R^n)$,
$$
S^{\Phi}_{R,\alpha(\rho)}u
=\Ma^{-1}T_{\phi_{\rho,R}}\Ma u,
\qquad
\phi_{\rho,R}(\xi):=\Phi\!\left(\frac{s(\rho)\xi}{R}\right).
$$
Hence, the fractional parameter acts only through the pair $(\kappa(\rho),s(\rho))$: $\kappa(\rho)$ changes the chirp geometry, while $s(\rho)$ changes the effective bandwidth from $R$ to $R_{\mathrm{eff}}(\rho;R)=R/s(\rho)$. In particular, on the norm scales retained, the operator bounds are preserved, whereas the geometric placement of evidence across components is not.
\end{theorem}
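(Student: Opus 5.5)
The plan is to reduce the identity to Proposition~\ref{prop:frft-mult}, applied with the bounded symbol $m(\xi):=\Phi(\xi/R)$ at the angle $\alpha=\alpha(\rho)$. By definition,
$$S^{\Phi}_{R,\alpha}=\Fa^{-1}\bigl(m\,\Fa\,\cdot\bigr)=T_{m,\alpha}.$$
Proposition~\ref{prop:frft-mult} then gives $T_{m,\alpha}=\Ma^{-1}T_{m_\alpha}\Ma$ with
$$m_\alpha(\xi)=m((\sin\alpha)\xi)=\Phi\!\left(\frac{(\sin\alpha)\xi}{R}\right).$$

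The first point to settle is the sign of $\sin\alpha$. The statement uses $s(\rho)=|\sin\alpha(\rho)|$, so when $\sin\alpha<0$ we get $m_\alpha(\xi)=\phi_{\rho,R}(-\xi)$ rather than $\phi_{\rho,R}(\xi)$. I would handle this as follows.
\begin{itemize}
\item If $\Phi$ is even (the radial templates used for Bochner--Riesz and Littlewood--Paley), the two symbols coincide.
\item In general, $T_{\phi(-\cdot)}=\mathcal R\,T_\phi\,\mathcal R$, where $\mathcal R g(x)=g(-x)$.
\item $\mathcal R$ commutes with $\Ma$, since $|x|^2$ is even, and $\mathcal R$ preserves all the norms involved.
\end{itemize}
So the identity holds exactly for $\sin\alpha>0$, and up to this harmless reflection otherwise. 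Alternatively, one can read $s(\rho)$ as $\sin\alpha(\rho)$ in the formula for $\phi_{\rho,R}$. I would state this caveat explicitly rather than hide it.

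The second step extends the identity from Schwartz functions to $u\in\Tem(\R^n)$.
\begin{itemize}
\item On $\calS$ the identity is exactly the computation in Proposition~\ref{prop:frft-mult}.
\item By Lemma~\ref{lem:conjprin}(1), $\Ma$ and $\Mai$ preserve $\calS$ and $\Tem$.
\item For a bounded $\Phi$, I would interpret $T_{\phi_{\rho,R}}$ on $\Tem$ only where it is meaningful: either $\Phi$ is smooth with polynomially bounded derivatives, or one defines $\Fa$ on $\Tem$ via the factorization of Lemma~\ref{lem:factor}.
\end{itemize}
Either way, both sides are defined by the same factorization, so they agree by definition and duality. This extension to $\Tem$ with a merely bounded $\Phi$ is the main obstacle. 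I would resolve it by taking the factorization as the definition of $S^\Phi_{R,\alpha}$ on $\Tem$.

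The final step interprets the formula.
\begin{itemize}
\item The conjugation by $\Ma$ depends on $\rho$ only through $\kappa(\rho)$.
\item The symbol depends on $\rho$ only through $s(\rho)$, and $\phi_{\rho,R}(\xi)=\Phi(\xi/R_{\mathrm{eff}}(\rho;R))$. So the classical cutoff radius is $R/s(\rho)$.
\item Norm preservation follows from Proposition~\ref{prop:multiplier and dyadic modules}: $\|S^\Phi_{R,\alpha}\|=\|T_{\phi_{\rho,R}}\|$ on every $L^p$ and $L^{p,s}$.
\item Dilation invariance of classical multiplier norms removes the dependence on $R_{\mathrm{eff}}$, so the bounds are the same for every $\rho$.
\end{itemize}
The claim about the non-preserved geometric placement is a qualitative remark. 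I would support it by observing that $\Ma$ changes the phase, so the output differs pointwise, while moduli and bounds are preserved.
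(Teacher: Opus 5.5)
Your proposal is correct and follows the paper's own route: apply Proposition~\ref{prop:frft-mult} with $m=\Phi(\cdot/R)$, substitute $\alpha=\alpha(\rho)$, and obtain the norm statements from the isometry of $\Ma$. Your caveats are justified. The paper's proof silently writes $m((\sin\alpha)\xi)=\Phi(|\sin\alpha|\xi/R)$, which holds only when $\sin\alpha>0$ or $\Phi$ is even, as Proposition~\ref{prop:bandlaw} later concedes. Since $\alpha$ and $\alpha+\pi$ share both $\kappa$ and $s$, the claim that the parameter ``acts only through $(\kappa(\rho),s(\rho))$'' likewise holds for general $\Phi$ only modulo your reflection $\mathcal R$. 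The paper also never addresses how $T_{\phi_{\rho,R}}$ acts on $\Tem$ when $\Phi$ is merely bounded.
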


\begin{proof}
By definition,
$$
S^{\Phi}_{R,\alpha}u=\Fa^{-1}\bigl(\Phi(\cdot/R)\Fa u\bigr),
$$
which is equivalent to a classical localized multiplier family after a fixed rescaling by $\abs{\sin\alpha}$. Writing $m(\eta)=\Phi(\eta/R)$ and using FrFT multiplier identity, we obtain
$$
S^{\Phi}_{R,\alpha}=\Ma^{-1}T_{m_{\alpha}}\Ma,
\qquad m_{\alpha}(\xi)=m((\sin\alpha)\xi)=\Phi\!\left(\frac{\abs{\sin\alpha}\,\xi}{R}\right).
$$
Substituting $\alpha=\alpha(\rho)$ gives the formula. The statement on operator norms follows from the norm-preservation mechanism.
\end{proof}

\section{Theoretical framework}

\begin{proposition}[\textbf{Theorem of strong and weak bounds}]\label{prop:21}
Let $T$ be a linear or sublinear operator on functions over $\R^{n}$ and define $T_{\alpha}=\Ma^{-1}T\Ma$. Then the following hold.
\begin{enumerate}[label=\textup{(\roman*)}]
    \item If $T:L^{p}(\R^{n})\to L^{q}(\R^{n})$ is bounded for some $0<p,q\le\infty$, then
    $$
    \norm{T_{\alpha}f}_{L^{q}}\le \norm{T}_{L^{p}\to L^{q}}\,\norm{f}_{L^{p}},
    \qquad f\in L^{p}(\R^{n}),
    $$
    and in fact $\norm{T_{\alpha}}_{L^{p}\to L^{q}}=\norm{T}_{L^{p}\to L^{q}}$.
    \item If $T:L^{1}(\R^{n})\to L^{1,\infty}(\R^{n})$ is of weak type $(1,1)$, then
    $$
    \norm{T_{\alpha}f}_{L^{1,\infty}}\le \norm{T}_{L^{1}\to L^{1,\infty}}\,\norm{f}_{L^{1}},
    \qquad f\in L^{1}(\R^{n}),
    $$
    and again the operator quasi-norm is preserved.
\end{enumerate}
The same argument works for any rearrangement-invariant function space whose norm depends only on the distribution function of the modulus.
\end{proposition}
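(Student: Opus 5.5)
The plan is to reduce everything to the pointwise identity $|\Ma h|=|\Mai h|=|h|$ from Lemma~\ref{lem:isom}. Since the multipliers are unimodular, $\Ma$ and $\Mai$ preserve the distribution function
$$\lambda\mapsto |\{x:|h(x)|>\lambda\}|$$
of every measurable $h$. They are therefore isometries, or quasi-isometries with constant exactly one, on every space whose (quasi-)norm depends only on this distribution function. This covers $L^p$ for all $0<p\le\infty$, including the quasi-Banach range $p<1$, where Lemma~\ref{lem:isom} already applies. It also covers $L^{1,\infty}$ and, more generally, any rearrangement-invariant space. I will state this once as a preliminary observation.

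For (i), fix $f\in L^p$ and put $g=\Ma f\in L^p$, with $\|g\|_{L^p}=\|f\|_{L^p}$. Then $T_\alpha f=\Mai(Tg)$, so
$$\|T_\alpha f\|_{L^q}=\|Tg\|_{L^q}\le \|T\|_{L^p\to L^q}\|f\|_{L^p}.$$
This uses nothing about linearity beyond the fact that $T$ is defined on $g$, so it works equally for sublinear $T$. Hence $\|T_\alpha\|_{L^p\to L^q}\le\|T\|_{L^p\to L^q}$.

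For the reverse inequality, I write $T=\Ma T_\alpha\Mai$. Given $h\in L^p$, set $f=\Mai h$; then $Th=\Ma T_\alpha f$, and the same computation gives $\|Th\|_{L^q}\le\|T_\alpha\|\,\|h\|_{L^p}$. The one point needing care is that $\Ma$ and $\Mai$ are bijections of $L^p$ onto itself. This is immediate, since they are mutually inverse and both preserve $L^p$.

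For (ii), I repeat the argument with $L^q$ replaced by $L^{1,\infty}$. The level sets satisfy
$$\{x:|T_\alpha f(x)|>\lambda\}=\{x:|Tg(x)|>\lambda\},$$
so for every $\lambda>0$ we get $\lambda|\{|T_\alpha f|>\lambda\}|\le\|T\|_{L^1\to L^{1,\infty}}\|f\|_{L^1}$. Taking the supremum over $\lambda$ gives the bound, and the reverse inequality follows by conjugating back as in (i). The closing remark on rearrangement-invariant spaces follows verbatim once the preliminary observation is in place.

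There is no real obstacle here. The only step worth a sentence is the measurability and well-definedness of $T_\alpha$ on the relevant space when $T$ is merely sublinear. Because $\Ma$ maps the domain of $T$ bijectively onto itself, $T_\alpha$ is defined exactly where needed.
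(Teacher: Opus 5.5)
Your proposal is correct and follows essentially the same route as the paper. You set $g=\Ma f$, use $|T_\alpha f|=|Tg|$ and $\|g\|_{L^p}=\|f\|_{L^p}$ (with coinciding level sets for the weak-type case), and then get the reverse inequality by writing $T=\Ma T_\alpha\Mai$. Your preliminary observation that unimodular multiplication preserves distribution functions is exactly what underlies the paper's closing remark on rearrangement-invariant spaces.
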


\begin{proof}
Let $g=\Ma f$. Since $\Ma$ is multiplication by a unimodular factor, one has
$$
\norm{g}_{L^{p}}=\norm{f}_{L^{p}}
\qquad\text{for every }0<p\le \infty.
$$
Moreover,
$$
T_{\alpha}f=\Ma^{-1}T(\Ma f)=\Ma^{-1}Tg,
$$
so the output differs from $Tg$ only by a unimodular factor. Hence
$$
\abs{T_{\alpha}f(x)}=\abs{Tg(x)}
\quad\text{for almost every }x.
$$
If $T$ is bounded from $L^{p}$ to $L^{q}$, then
$$
\norm{T_{\alpha}f}_{L^{q}}
=\norm{\Ma^{-1}Tg}_{L^{q}}
=\norm{Tg}_{L^{q}}
\le \norm{T}_{L^{p}\to L^{q}}\,\norm{g}_{L^{p}}
=\norm{T}_{L^{p}\to L^{q}}\,\norm{f}_{L^{p}}.
$$
This proves the inequality. Applying the same argument to $T=\Ma T_{\alpha}\Ma^{-1}$ shows the reverse inequality, so the operator norms are equal.

For the weak type statement, fix $\lambda>0$. Because $\abs{T_{\alpha}f}=\abs{Tg}$,
$$
\big|\set{x:\abs{T_{\alpha}f(x)}>\lambda}\big|
=\big|\set{x:\abs{Tg(x)}>\lambda}\big|.
$$
Therefore, we have
$$
\lambda\,\big|\set{x:\abs{T_{\alpha}f(x)}>\lambda}\big|
=\lambda\,\big|\set{x:\abs{Tg(x)}>\lambda}\big|
\le \norm{T}_{L^{1}\to L^{1,\infty}}\,\norm{g}_{L^{1}}
=\norm{T}_{L^{1}\to L^{1,\infty}}\,\norm{f}_{L^{1}}.
$$
This finishes the proof of Proposition $\ref{prop:21}$.
\end{proof}

\begin{proposition}[\textbf{Dyadic conjugation principle}]\label{prop:81}
Let $u\in\mathscr S'(\R^{n})$, set $v=\Ma u$, let $s\in\R$, $1\le p\le\infty$, and $0<q\le\infty$. Then the transported low-frequency and dyadic pieces satisfy
$$
\norm{S_{0,\alpha}u}_{L^{p}}=\norm{S_{0}v}_{L^{p}},
$$
$$
\Big(\sum_{j\ge1}\big(2^{js}\norm{\Delta_{j,\alpha}u}_{L^{p}}\big)^{q}\Big)^{1/q}
=
\Big(\sum_{j\ge1}\big(2^{js}\norm{\Delta_{j}v}_{L^{p}}\big)^{q}\Big)^{1/q},
$$
with the usual supremum interpretation when $q=\infty$, and, whenever $p<\infty$,
$$
\Big\|\Big(\sum_{j\ge1}\big(2^{js}\abs{\Delta_{j,\alpha}u}\big)^{q}\Big)^{1/q}\Big\|_{L^{p}}
=
\Big\|\Big(\sum_{j\ge1}\big(2^{js}\abs{\Delta_{j}v}\big)^{q}\Big)^{1/q}\Big\|_{L^{p}}.
$$
Again, for $q=\infty$ the sum is replaced by an essential supremum.
\end{proposition}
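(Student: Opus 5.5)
The plan is to reduce all three identities to the pointwise relations
\[
\abs{S_{0,\alpha}u}=\abs{S_0 v},\qquad \abs{\Delta_{j,\alpha}u}=\abs{\Delta_j v}\quad (j\ge1),
\]
valid almost everywhere, or as identities of distributions of the modulus. After that, every displayed quantity is a functional of these moduli alone, so the identities follow at once.

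First I fix the objects. By Lemma~\ref{lem:conjprin}(1), $\Ma$ and $\Mai$ preserve $\mathcal S'(\R^n)$, so $v=\Ma u\in\mathcal S'(\R^n)$. Since $\chi$ and $\varphi(2^{-j}\cdot)$ are compactly supported and smooth, $S_0v$ and $\Delta_jv$ are smooth functions of at most polynomial growth (Paley--Wiener--Schwartz). Hence
\[
S_{0,\alpha}u=\Mai S_0 v,\qquad \Delta_{j,\alpha}u=\Mai\Delta_j v
\]
are genuine functions, namely smooth functions multiplied by the unimodular chirp $e^{-i\pi|x|^2\cot\alpha}$. Taking moduli gives the pointwise identities above, for every $x$.

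Next I derive each norm identity from these pointwise identities.

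\begin{itemize}
\item \textbf{Low-frequency term.} The first identity is Lemma~\ref{lem:isom}, or directly $\|\cdot\|_{L^p}$ of equal moduli. Both sides may equal $+\infty$; the identity then holds in $[0,\infty]$.
\item \textbf{Besov-type sum.} For each $j\ge1$, $\norm{\Delta_{j,\alpha}u}_{L^p}=\norm{\Delta_jv}_{L^p}$ for every $1\le p\le\infty$. The weighted $\ell^q$ sums, or the suprema when $q=\infty$, therefore agree term by term.
\item \textbf{Triebel--Lizorkin-type term.} For $p<\infty$, the inner functions
\[
\Big(\sum_{j\ge1}\big(2^{js}\abs{\Delta_{j,\alpha}u(x)}\big)^q\Big)^{1/q}
\quad\text{and}\quad
\Big(\sum_{j\ge1}\big(2^{js}\abs{\Delta_{j}v(x)}\big)^q\Big)^{1/q}
\]
coincide pointwise, and the same holds for the supremum when $q=\infty$. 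Taking the $L^p$ norm gives equality.
\end{itemize}

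This is precisely the mechanism of Proposition~\ref{prop:general} and Lemma~\ref{lem:conjprin}(3): each quantity depends only on pointwise moduli, so the transfer holds with the same constant, here as an exact equality.

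The only delicate point is the measure-theoretic bookkeeping. The series may diverge, so all identities must be read in $[0,\infty]$. For the infinite sums I would argue, as in the proof of Theorem~\ref{thm:rough-square-transport}, via truncations $\sum_{1\le j\le N}$ and monotone convergence, so that equality at each $N$ passes to the limit. No interchange of $\Mai$ with an infinite sum is needed, because $\Mai$ acts on each piece separately before moduli are taken.
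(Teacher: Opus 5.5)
Your proposal is correct and follows the paper's proof. Both arguments reduce everything to the pointwise identities $|S_{0,\alpha}u|=|S_0v|$ and $|\Delta_{j,\alpha}u|=|\Delta_jv|$, which come from $\Delta_{j,\alpha}u=\Mai\Delta_j v$ and the unimodularity of the chirp, and then read off the Besov- and Triebel--Lizorkin-type identities term by term. Your Paley--Wiener--Schwartz remark is a useful addition the paper leaves implicit: it shows $S_0v$ and $\Delta_jv$ are genuine smooth functions, so the pointwise statements make sense for $u\in\mathcal S'(\R^n)$. The truncation and monotone-convergence step is harmless but unnecessary, since sums or suprema of termwise equal nonnegative quantities already agree in $[0,\infty]$.
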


\begin{proof}
By definition, we have
\begin{equation}\label{eq:dyadic-conjugation}
S_{0,\alpha}u=\Ma^{-1}S_{0}(\Ma u)=\Ma^{-1}S_{0}v,
\qquad
\Delta_{j,\alpha}u=\Ma^{-1}\Delta_{j}(\Ma u)=\Ma^{-1}\Delta_{j}v.
\end{equation}
Since $\Ma^{-1}$ multiplies by a unimodular function, it preserves pointwise modulus. Hence
\begin{equation}\label{eq:dyadic-modulus}
\abs{S_{0,\alpha}u}=\abs{S_{0}v},
\qquad
\abs{\Delta_{j,\alpha}u}=\abs{\Delta_{j}v}
\quad \text{a.e.}
\end{equation}
Taking the $L^{p}$ norm in \eqref{eq:dyadic-modulus}, we obtain
\begin{equation}\label{eq:dyadic-lowfreq}
\norm{S_{0,\alpha}u}_{L^{p}}=\norm{S_{0}v}_{L^{p}}.
\end{equation}
Also, by \eqref{eq:dyadic-modulus}, we have
\begin{equation}\label{eq:dyadic-termwise}
2^{js}\norm{\Delta_{j,\alpha}u}_{L^{p}}
=2^{js}\norm{\Delta_{j}v}_{L^{p}},\qquad j\ge1.
\end{equation}
Therefore, we have
\begin{equation}\label{eq:dyadic-besov}
\left(\sum_{j\ge1}\bigl(2^{js}\norm{\Delta_{j,\alpha}u}_{L^{p}}\bigr)^{q}\right)^{1/q}
=
\left(\sum_{j\ge1}\bigl(2^{js}\norm{\Delta_{j}v}_{L^{p}}\bigr)^{q}\right)^{1/q},
\end{equation}
with the usual supremum interpretation when $q=\infty$. Moreover, \eqref{eq:dyadic-modulus} also gives
\begin{equation}\label{eq:dyadic-triebel-pointwise}
\left(\sum_{j\ge1}\bigl(2^{js}\abs{\Delta_{j,\alpha}u}\bigr)^{q}\right)^{1/q}
=
\left(\sum_{j\ge1}\bigl(2^{js}\abs{\Delta_{j}v}\bigr)^{q}\right)^{1/q}
\quad \text{a.e.},
\end{equation}
again with the usual supremum interpretation when $q=\infty$. Taking the $L^{p}$ norm in \eqref{eq:dyadic-triebel-pointwise}, we arrive at
\begin{equation}\label{eq:dyadic-triebel}
\norm{
\left(\sum_{j\ge1}\bigl(2^{js}\abs{\Delta_{j,\alpha}u}\bigr)^{q}\right)^{1/q}
}_{L^{p}}
=
\norm{
\left(\sum_{j\ge1}\bigl(2^{js}\abs{\Delta_{j}v}\bigr)^{q}\right)^{1/q}
}_{L^{p}}.
\end{equation}
Thus, combining \eqref{eq:dyadic-lowfreq} and \eqref{eq:dyadic-besov} with \eqref{eq:dyadic-triebel} yield the desired conclusion.

For the Triebel-Lizorkin-type expression, the pointwise equality of moduli implies
$$
\Big(\sum_{j\ge1}\big(2^{js}\abs{\Delta_{j,\alpha}u(x)}\big)^{q}\Big)^{1/q}
=
\Big(\sum_{j\ge1}\big(2^{js}\abs{\Delta_{j}v(x)}\big)^{q}\Big)^{1/q}
$$
for almost every $x$, again with the usual essential supremum interpretation when $q=\infty$. Taking the $L^{p}$ norm gives the last identity.
\end{proof}

\begin{theorem}[\textbf{FrFT inhomogeneous Sobolev, Besov, and Triebel-Lizorkin geometry}]\label{thm:82}
Let $s\in\R$ and $\alpha\in\R\setminus\pi\mathbb Z$. For $u\in\mathscr S'(\R^{n})$ define
$$
\norm{u}_{B^{s}_{p,q,\alpha}}
:=\norm{S_{0,\alpha}u}_{L^{p}}
+\Big(\sum_{j\ge1}\big(2^{js}\norm{\Delta_{j,\alpha}u}_{L^{p}}\big)^{q}\Big)^{1/q}
$$
and
$$
\norm{u}_{F^{s}_{p,q,\alpha}}
:=\norm{S_{0,\alpha}u}_{L^{p}}
+\Big\|\Big(\sum_{j\ge1}\big(2^{js}\abs{\Delta_{j,\alpha}u}\big)^{q}\Big)^{1/q}\Big\|_{L^{p}},
$$
with the usual supremum interpretation when $q=\infty$. In particular, define the inhomogeneous Sobolev norm by
$$
\norm{u}_{H^{s,p}_{\alpha}}:=\norm{u}_{F^{s}_{p,2,\alpha}}.
$$
If $v=\Ma u$, then, whenever the corresponding classical norms,
$$
\norm{u}_{B^{s}_{p,q,\alpha}}=\norm{v}_{B^{s}_{p,q}},
\qquad
\norm{u}_{F^{s}_{p,q,\alpha}}=\norm{v}_{F^{s}_{p,q}},
\qquad
\norm{u}_{H^{s,p}_{\alpha}}=\norm{v}_{H^{s,p}}.
$$
Thus, the FrFT inhomogeneous spaces are the chirp conjugation versions of the classical ones.
\end{theorem}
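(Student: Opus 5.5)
The argument will be a direct application of Proposition~\ref{prop:81}, together with the definitions of the classical inhomogeneous norms. Fix $u\in\mathscr S'(\R^n)$ and put $v=\Ma u$. By Lemma~\ref{lem:conjprin}(1), $v$ is again a tempered distribution, so the classical quantities $S_0v$ and $\Delta_jv$ make sense. These are exactly the building blocks of
$$
\norm{v}_{B^s_{p,q}}=\norm{S_0v}_{L^p}+\Big(\sum_{j\ge1}\big(2^{js}\norm{\Delta_jv}_{L^p}\big)^q\Big)^{1/q}
$$
and of the analogous expression for $\norm{v}_{F^s_{p,q}}$. We read the phrase ``whenever the corresponding classical norms'' as meaning that these norms are defined with the same pair $(\chi,\varphi)$ used to build $S_{0,\alpha}$ and $\Delta_{j,\alpha}$.

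\textbf{Step 1 (Besov).} Add the first two identities of Proposition~\ref{prop:81}. The low-frequency term satisfies $\norm{S_{0,\alpha}u}_{L^p}=\norm{S_0v}_{L^p}$. The dyadic $\ell^q(L^p)$ expressions coincide term by term, and this also covers $q=\infty$ with the supremum convention. Summing gives $\norm{u}_{B^s_{p,q,\alpha}}=\norm{v}_{B^s_{p,q}}$. Both sides may be infinite, so the identity also characterizes membership: $u\in B^s_{p,q,\alpha}$ if and only if $\Ma u\in B^s_{p,q}$.

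\textbf{Step 2 (Triebel--Lizorkin and Sobolev).} Here I use the pointwise modulus identity $\abs{\Delta_{j,\alpha}u}=\abs{\Delta_jv}$ a.e., which is \eqref{eq:dyadic-modulus}. It gives equality of the inner $\ell^q$ sums pointwise a.e., and hence of their $L^p$ norms. This is the third identity of Proposition~\ref{prop:81}, stated there for $p<\infty$, which is the standard range for $F$-spaces. Adding the low-frequency term gives $\norm{u}_{F^s_{p,q,\alpha}}=\norm{v}_{F^s_{p,q}}$. Specializing to $q=2$ and using the definition $\norm{u}_{H^{s,p}_\alpha}=\norm{u}_{F^s_{p,2,\alpha}}$ gives $\norm{u}_{H^{s,p}_\alpha}=\norm{v}_{F^s_{p,2}}$.

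\textbf{Step 3 (identifying the classical Sobolev norm).} This is the only step that is not purely formal. The remaining task is to identify $\norm{v}_{F^s_{p,2}}$ with $\norm{v}_{H^{s,p}}=\norm{\br{D}^sv}_{L^p}$ for $1<p<\infty$. That identification is the classical Littlewood--Paley characterization of Bessel potential spaces, and it holds only up to equivalence of norms, not as an exact equality. There are two ways to handle this. The first is to read $H^{s,p}$ as defined by the $F^s_{p,2}$ Littlewood--Paley norm, in which case the equality is exact. The second is to note that transferring through $\Ma$ costs no constant, so $\norm{u}_{H^{s,p}_\alpha}\approx\norm{\br{D}^s\Ma u}_{L^p}$ with exactly the classical equivalence constants. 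I will adopt the first reading, which matches the paper's definition, and remark on the second.

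Finally, applying $\Mai$ shows that the maps $u\mapsto\Ma u$ are isometric bijections between the FrFT spaces and the classical ones.
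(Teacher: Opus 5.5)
Your proposal is correct and follows the paper's proof essentially verbatim. Both arguments substitute the identities of Proposition~\ref{prop:81} into the definitions of the Besov and Triebel--Lizorkin norms, then obtain the Sobolev case by specializing to $q=2$. Your Step~3 caveat is well taken: the paper writes $\norm{v}_{F^{s}_{p,2}}=\norm{v}_{H^{s,p}}$ as an exact equality, which holds only if $H^{s,p}$ is defined by the Littlewood--Paley norm, and that is precisely the reading you adopt. Under a Bessel-potential definition one gets only equivalence, with the classical constants.
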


\begin{proof}
The proposition above already gives all needed identities. Indeed,
$$
\norm{S_{0,\alpha}u}_{L^{p}}=\norm{S_{0}v}_{L^{p}},
\qquad
2^{js}\norm{\Delta_{j,\alpha}u}_{L^{p}}=2^{js}\norm{\Delta_{j}v}_{L^{p}},
$$
and
$$
\Big\|\Big(\sum_{j\ge1}\big(2^{js}\abs{\Delta_{j,\alpha}u}\big)^{q}\Big)^{1/q}\Big\|_{L^{p}}
=
\Big\|\Big(\sum_{j\ge1}\big(2^{js}\abs{\Delta_{j}v}\big)^{q}\Big)^{1/q}\Big\|_{L^{p}}.
$$

Substituting these equalities into the definitions of the FrFT-side Besov and Triebel--Lizorkin norms yields
$$
\norm{u}_{B^{s}_{p,q,\alpha}}=\norm{v}_{B^{s}_{p,q}}
\quad\text{and}\quad
\norm{u}_{F^{s}_{p,q,\alpha}}=\norm{v}_{F^{s}_{p,q}}.
$$
The Sobolev identity is the special case $q=2$, namely
$$
\norm{u}_{H^{s,p}_{\alpha}}=\norm{u}_{F^{s}_{p,2,\alpha}}=\norm{v}_{F^{s}_{p,2}}=\norm{v}_{H^{s,p}}.
$$
No further argument is required: these norms are not merely equivalent to the classical norms; they coincide with the classical norms under chirp conjugation.
\end{proof}

\begin{theorem}[\textbf{Odd-even banks and vector/Banach-valued invariance}]\label{thm:83}
Let $1\le p\le\infty$, and let $0<r,s\le\infty$. Suppose $\{O_{\nu}\}_{\nu\in\mathcal N}$ is a finite family of classical odd-kernel descriptor operators and $\{E_{\mu}\}_{\mu\in\mathcal M}$ is a finite family of classical even-kernel reconstruction operators such that
$$
\Big\|\Big(\sum_{\nu\in\mathcal N}\abs{O_{\nu}g}^{s}\Big)^{1/s}\Big\|_{L^{p}}
\le C_{\mathcal O}\norm{g}_{L^{p}}
\quad \text{and}\quad
\Big\|\Big(\sum_{\mu\in\mathcal M}\abs{E_{\mu}g}^{r}\Big)^{1/r}\Big\|_{L^{p}}
\le C_{\mathcal E}\norm{g}_{L^{p}},
$$
with the usual supremum interpretation for $s=\infty$ or $r=\infty$. Define the conjugated banks
$$
O_{\nu,\alpha}:=\Ma^{-1}O_{\nu}\Ma
\quad \text{and}\quad
E_{\mu,\alpha}:=\Ma^{-1}E_{\mu}\Ma.
$$
Then, we have
$$
\Big\|\Big(\sum_{\nu\in\mathcal N}\abs{O_{\nu,\alpha}f}^{s}\Big)^{1/s}\Big\|_{L^{p}}
\le C_{\mathcal O}\norm{f}_{L^{p}}
\quad \text{and}\quad
\Big\|\Big(\sum_{\mu\in\mathcal M}\abs{E_{\mu,\alpha}f}^{r}\Big)^{1/r}\Big\|_{L^{p}}
\le C_{\mathcal E}\norm{f}_{L^{p}}.
$$
More generally, if $X$ and $Y$ are complex Banach spaces and $T:L^{p}(\R^{n};X)\to L^{p}(\R^{n};Y)$ is bounded, then the conjugated operator
$T_{\alpha}:=\Ma^{-1}T\Ma$
acts boundedly from $L^{p}(\R^{n};X)$ to $L^{p}(\R^{n};Y)$ with the same operator norm.
\end{theorem}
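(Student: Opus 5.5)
The plan is the same chirp-transfer argument used in Proposition~\ref{prop:family} and Lemma~\ref{lem:conjprin}. Fix $f\in L^p(\R^n)$ and set $g:=\Ma f$. By definition, $O_{\nu,\alpha}f=\Ma^{-1}O_\nu g$ and $E_{\mu,\alpha}f=\Ma^{-1}E_\mu g$. Since $\Ma^{-1}$ is multiplication by the unimodular factor $e^{-i\pi|x|^2\cot\alpha}$, we get the pointwise identities
$$|O_{\nu,\alpha}f(x)|=|O_\nu g(x)|\quad\text{and}\quad |E_{\mu,\alpha}f(x)|=|E_\mu g(x)|$$
for almost every $x$ and every $\nu,\mu$. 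Because the families are finite, the union of the exceptional null sets is still null. Hence
$$\Big(\sum_\nu|O_{\nu,\alpha}f|^s\Big)^{1/s}=\Big(\sum_\nu|O_\nu g|^s\Big)^{1/s}\quad\text{a.e.},$$
with the maximum replacing the sum when $s=\infty$, and likewise for the even bank with exponent $r$. Taking $L^p$ norms, applying the assumed classical bounds to $g$, and using $\|g\|_{L^p}=\|f\|_{L^p}$ (Lemma~\ref{lem:isometry}) gives both estimates with the same constants $C_{\mathcal O}$ and $C_{\mathcal E}$. The odd/even structure of the kernels plays no role.

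For the Banach-valued statement, I first note that $\Ma$ acts on $X$-valued functions by $(\Ma F)(x)=e^{i\pi|x|^2\cot\alpha}F(x)$. Multiplying by a complex scalar of modulus one preserves the norm in any complex Banach space, so $\|\Ma F(x)\|_X=\|F(x)\|_X$ pointwise. This makes $\Ma$ and $\Ma^{-1}$ isometric isomorphisms of $L^p(\R^n;X)$, and the same holds with $Y$ in place of $X$. Measurability is preserved, since we multiply a strongly measurable function by a continuous scalar function. Then
$$\|T_\alpha F\|_{L^p(Y)}=\|T(\Ma F)\|_{L^p(Y)}\le\|T\|\,\|\Ma F\|_{L^p(X)}=\|T\|\,\|F\|_{L^p(X)},$$
so $\|T_\alpha\|\le\|T\|$. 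Writing $T=\Ma T_\alpha\Ma^{-1}$ and repeating the argument gives the reverse inequality, hence equality of the operator norms, exactly as in Proposition~\ref{prop:multiplier and dyadic modules}.

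The only point needing care is the range of exponents. For $0<r,s<1$ the expressions $\big(\sum|\cdot|^s\big)^{1/s}$ are merely quasi-norms. However, the argument uses only equality of pointwise moduli, never a triangle inequality, so it goes through unchanged. I would make this explicit rather than invoke Proposition~\ref{prop:general}. I expect no genuine obstacle; the main thing to state carefully is that the scalar unimodular factor commutes with the vector-valued structure, which is what justifies the pointwise isometry in $X$ and $Y$.
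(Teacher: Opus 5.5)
Your proposal is correct and follows essentially the same route as the paper. The paper likewise uses the pointwise identity $|O_{\nu,\alpha}f|=|O_\nu(\Ma f)|$ for the banks. For the Banach-valued part it uses that $\Ma$ is an isometry on $L^p(\R^n;X)$ and $L^p(\R^n;Y)$, together with $T=\Ma T_\alpha\Ma^{-1}$ to get equality of operator norms. Your remarks on null sets, measurability and the quasi-norm range $0<r,s<1$ are harmless refinements that the paper leaves implicit.
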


\begin{proof}
Let $g=\Ma f$. Then for each $\nu\in\mathcal N$,
$$
O_{\nu,\alpha}f=\Ma^{-1}O_{\nu}g
\quad \text{and}\quad
\abs{O_{\nu,\alpha}f}=\abs{O_{\nu}g}.
$$
Therefore, 
$$
\Big(\sum_{\nu\in\mathcal N}\abs{O_{\nu,\alpha}f}^{s}\Big)^{1/s}
=
\Big(\sum_{\nu\in\mathcal N}\abs{O_{\nu}g}^{s}\Big)^{1/s}
\quad\text{pointwise},
$$
and hence
$$
\Big\|\Big(\sum_{\nu\in\mathcal N}\abs{O_{\nu,\alpha}f}^{s}\Big)^{1/s}\Big\|_{L^{p}}
=
\Big\|\Big(\sum_{\nu\in\mathcal N}\abs{O_{\nu}g}^{s}\Big)^{1/s}\Big\|_{L^{p}}
\le C_{\mathcal O}\norm{g}_{L^{p}}
=C_{\mathcal O}\norm{f}_{L^{p}}.
$$
The proof for the even bank is identical.

For the Banach-valued statement, let $F:\R^{n}\to X$ be strongly measurable. Since $\Ma$ acts by scalar multiplication,
$$
\norm{\Ma F(x)}_{X}=\abs{e^{i\phi_{\alpha}(x)}}\norm{F(x)}_{X}=\norm{F(x)}_{X}
\quad\text{for almost every }x.
$$
Thus, $\Ma$ is an isometry on $L^{p}(\R^{n};X)$ and similarly on $L^{p}(\R^{n};Y)$. Consequently,
\begin{align*}
\norm{T_{\alpha}F}_{L^{p}(\R^{n};Y)}
&=\norm{T(\Ma F)}_{L^{p}(\R^{n};Y)}\\
&\le \norm{T}_{L^{p}(X)\to L^{p}(Y)}\norm{\Ma F}_{L^{p}(\R^{n};X)}\\
&=\norm{T}_{L^{p}(X)\to L^{p}(Y)}\norm{F}_{L^{p}(\R^{n};X)}.
\end{align*}
Applying the same argument to $T=\Ma T_{\alpha}\Ma^{-1}$ gives equality of the operator norms.
\end{proof}

\begin{theorem}[\textbf{Twisted multilinear collaborative modules}]\label{thm:84}
Let $m\ge2$ and let exponents $1\le p_{1},\dots,p_{m}\le\infty$ satisfy
$$
\frac{1}{p}=\frac{1}{p_{1}}+\cdots+\frac{1}{p_{m}}
$$
for some $0<p\le\infty$. Define the twisted product
$$
\Pi_{\alpha}^{(m)}(f_{1},\dots,f_{m})
:=\Ma^{-1}\big((\Ma f_{1})\cdots(\Ma f_{m})\big).
$$
Then, the following inequality holds,
$$
\norm{\Pi_{\alpha}^{(m)}(f_{1},\dots,f_{m})}_{L^{p}}
\le \prod_{j=1}^{m}\norm{f_{j}}_{L^{p_{j}}}.
$$

Assume further that $\sigma(\xi_{1},\dots,\xi_{m})$ is a classical Coifman-Meyer symbol, so the associated multilinear Fourier multiplier $T_{\sigma}$ satisfies
$$
\norm{T_{\sigma}(g_{1},\dots,g_{m})}_{L^{p}}
\le C_{\sigma,\bm p}\prod_{j=1}^{m}\norm{g_{j}}_{L^{p_{j}}}
$$
throughout the usual boundedness range. Define its FrFT conjugate by
$$
T_{\sigma,\alpha}(f_{1},\dots,f_{m})
:=\Ma^{-1}T_{\sigma}(\Ma f_{1},\dots,\Ma f_{m}).
$$
Then, the following inequality also holds,
$$
\norm{T_{\sigma,\alpha}(f_{1},\dots,f_{m})}_{L^{p}}
\le C_{\sigma,\bm p}\prod_{j=1}^{m}\norm{f_{j}}_{L^{p_{j}}}.
$$
Hence every classical multilinear Coifman-Meyer estimate can develops to FrFT version.
\end{theorem}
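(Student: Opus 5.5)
The plan is to reduce both assertions to pointwise modulus identities, in the same way as Proposition~\ref{prop:21} and Theorem~\ref{thm:83}, and then apply classical inequalities to the chirped inputs $g_j:=\Ma f_j$.

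\textbf{Twisted product.} Since $\Ma$ and $\Mai$ are multiplication by unimodular factors, for almost every $x$ we have
$$
\abs{\Pi_{\alpha}^{(m)}(f_1,\dots,f_m)(x)}=\prod_{j=1}^m\abs{\Ma f_j(x)}=\prod_{j=1}^m\abs{f_j(x)}.
$$
The first estimate is then the generalized Hölder inequality $\norm{\prod_j f_j}_{L^p}\le\prod_j\norm{f_j}_{L^{p_j}}$ with $1/p=\sum_j 1/p_j$. This inequality holds for $0<p\le\infty$. One can prove it by induction on $m$, applying the two-function Hölder inequality to $\abs{f_1}^p$ and $\abs{f_2\cdots f_m}^p$ with exponents $p_1/p$ and its dual. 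The cases where some $p_j=\infty$ are handled by pulling out the $L^\infty$ norm directly.

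\textbf{Coifman--Meyer part.} Take $f_j\in L^{p_j}$ and note $\norm{g_j}_{L^{p_j}}=\norm{f_j}_{L^{p_j}}$ by Lemma~\ref{lem:isom}. The definition gives
$$
\abs{T_{\sigma,\alpha}(f_1,\dots,f_m)}=\abs{T_\sigma(g_1,\dots,g_m)}\quad\text{a.e.},
$$
so $\norm{T_{\sigma,\alpha}(f_1,\dots,f_m)}_{L^p}=\norm{T_\sigma(g_1,\dots,g_m)}_{L^p}$. The assumed classical bound then yields $C_{\sigma,\bm p}\prod_j\norm{g_j}_{L^{p_j}}=C_{\sigma,\bm p}\prod_j\norm{f_j}_{L^{p_j}}$. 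This holds in the same range of exponents and with the same constant.

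\textbf{Main obstacle.} The only delicate point is well-definedness. $T_\sigma$ is a priori defined on Schwartz functions, and $\Ma$ preserves $\mathcal S(\R^n)$ by Lemma~\ref{lem:conjprin}, so $T_{\sigma,\alpha}$ makes sense on $\mathcal S\times\cdots\times\mathcal S$ and the estimate holds there. When every $p_j<\infty$, the extension to general $L^{p_j}$ data goes by density. The multilinear difference is controlled through the bound just proved, and the bounded extension of $T_{\sigma,\alpha}$ coincides with $\Mai T_\sigma(\Ma\,\cdot,\dots,\Ma\,\cdot)$ applied to the classical extension of $T_\sigma$. When some $p_j=\infty$, I would interpret the classical bound on whatever class it is stated for, and note that $\Ma$ maps that class to itself. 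Since the constants are unchanged, no further argument is needed.
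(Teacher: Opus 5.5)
Your proposal is correct and follows the paper's proof: for the twisted product, the pointwise identity $\abs{\Pi_{\alpha}^{(m)}(f_1,\dots,f_m)}=\prod_j\abs{f_j}$ combined with the multilinear H\"older inequality; for the Coifman--Meyer part, the substitution $g_j=\Ma f_j$ together with the $L^p$-isometry of $\Ma$ and $\Mai$. Your additional remarks on H\"older for $0<p<1$ and on well-definedness and density go beyond what the paper writes but leave the argument unchanged.
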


\begin{proof}
For the twisted product, unimodularity gives the pointwise identity
$$
\abs{\Pi_{\alpha}^{(m)}(f_{1},\dots,f_{m})}
=\prod_{j=1}^{m}\abs{\Ma f_{j}}
=\prod_{j=1}^{m}\abs{f_{j}}.
$$
Applying multilinear Holder's inequality immediately yields
$$
\norm{\Pi_{\alpha}^{(m)}(f_{1},\dots,f_{m})}_{L^{p}}
\le \prod_{j=1}^{m}\norm{f_{j}}_{L^{p_{j}}}.
$$
For the multiplier module, set $g_{j}=\Ma f_{j}$. Then
$$
T_{\sigma,\alpha}(f_{1},\dots,f_{m})=\Ma^{-1}T_{\sigma}(g_{1},\dots,g_{m}),
$$
so by the isometric property of $\Ma^{-1}$,
$$
\norm{T_{\sigma,\alpha}(f_{1},\dots,f_{m})}_{L^{p}}
=\norm{T_{\sigma}(g_{1},\dots,g_{m})}_{L^{p}}.
$$
The classical Coifman-Meyer theorem therefore gives
$$
\norm{T_{\sigma,\alpha}(f_{1},\dots,f_{m})}_{L^{p}}
\le C_{\sigma,\bm p}\prod_{j=1}^{m}\norm{g_{j}}_{L^{p_{j}}}
=C_{\sigma,\bm p}\prod_{j=1}^{m}\norm{f_{j}}_{L^{p_{j}}}.
$$
This is the desired FrFT bound.
\end{proof}

\begin{remark}
If one rewrites $T_{\sigma,\alpha}$ in FrFT frequency variables, the resulting symbol is a rescaled version of the classical symbol. Thus the result may also be interpreted as showing that the Coifman--Meyer class is invariant under FrFT conjugation.
\end{remark}

\section{The oscillation package on $\BMO_{\alpha}$}

We recall that the classical BMO seminorm can be written as
$$
\norm{g}_{\BMO}
:=\sup_Q \frac1{|Q|}\int_Q \abs{g(x)-\Avg_Q(g)}\,dx,
$$
where the supremum is over cubes $Q\subset\R^n$.

\begin{theorem}[\textbf{FrFT John-Nirenberg theorem}]\label{thm:JN}
Let $b\in \BMOa(\R^n)$. Then there exist constants $C_1,C_2>0$, depending only on the dimension, such that for every cube $Q\subset\R^n$ and every $\lambda>0$,
$$
\bigl|\{x\in Q:\abs{\Ma b(x)-\Avg_Q(\Ma b)}>\lambda\}\bigr|
\le C_1|Q|\exp\!\paren{-\frac{C_2\lambda}{\norm{b}_{\BMOa}}}.
$$
Equivalently, the oscillation of $\Ma b$ around its mean has exponential decay uniformly over cubes.
\end{theorem}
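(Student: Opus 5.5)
The plan is to reduce the statement to the classical John--Nirenberg inequality by the same chirp-transfer mechanism used throughout the paper (Proposition~\ref{prop:general} and Lemma~\ref{lem:isom}). Set $g:=\Ma b$. By the definition of $\BMOa$, $b\in\BMOa$ means exactly that $g\in\BMO(\R^n)$, and $\norm{b}_{\BMOa}=\norm{g}_{\BMO}$. The set appearing on the left-hand side is literally
$$
\{x\in Q:\abs{g(x)-\Avg_Q(g)}>\lambda\},
$$
with no chirp factor remaining. So after this substitution there is nothing FrFT-specific left to handle.

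Next I would invoke the classical John--Nirenberg theorem for $g\in\BMO(\R^n)$. It provides dimensional constants $C_1,C_2>0$ such that, for every cube $Q$ and every $\lambda>0$,
$$
\bigl|\{x\in Q:\abs{g(x)-\Avg_Q g}>\lambda\}\bigr|\le C_1|Q|\exp\!\paren{-\frac{C_2\lambda}{\norm{g}_{\BMO}}}.
$$
Replacing $\norm{g}_{\BMO}$ by $\norm{b}_{\BMOa}$ gives the claim with the same constants. The constants depend only on $n$, since they are the classical ones and $\alpha$ never enters.

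Two small points need care. The first is the degenerate case $\norm{b}_{\BMOa}=0$. There $g$ is a.e. constant, so $b\in\calC_{\alpha}$, the level set is null for every $\lambda>0$, and the inequality holds with the convention $\exp(-\infty)=0$. The second is that the statement concerns oscillation of $\Ma b$, not of $b$ itself. Indeed $\Avg_Q$ of the chirped function is not chirp-invariant, since only $\Avg_Q\abs{\cdot}$ is preserved in Lemma~\ref{lem:isom}. That is exactly why the theorem is phrased with $\Ma b-\Avg_Q(\Ma b)$, and it keeps the argument a pure identification. I expect no real obstacle here; the only step needing attention is checking that the seminorm is defined modulo $\calC_{\alpha}$ consistently with constants modulo in $\BMO$, which is immediate from $\calC_{\alpha}=\Mai\C$.
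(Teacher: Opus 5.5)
Your proposal is correct and is essentially the paper's proof: both set $g=\Ma b$, use $\norm{b}_{\BMOa}=\norm{g}_{\BMO}$, and read the statement off from the classical John--Nirenberg inequality (the paper phrases this as an instance of Proposition~\ref{prop:general}). Your treatment of the degenerate case $\norm{b}_{\BMOa}=0$ is a harmless addition that the paper leaves implicit.
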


\begin{proof}
This is Proposition \ref{prop:general} applied to the classical John-Nirenberg theorem for $g=\Ma b$. Indeed, the quantity being estimated is the distribution function of the oscillation $\abs{g-\Avg_Q(g)}$, while the controlling norm is $\norm{g}_{\BMO}=\norm{b}_{\BMOa}$. After replacing $g$ by $\Ma b$, the inequality is the displayed FrFT statement.
\end{proof}

\begin{corollary}[\textbf{$L^r$ oscillation equivalence on $\BMO_{\alpha}$}]\label{cor:LrBMOa}
For every $1<r<\infty$ there exist constants $c_r,C_r>0$, depending only on $n$ and $r$, such that for all $b\in\BMOa(\R^n)$,
$$
 c_r\norm{b}_{\BMOa}
 \le
 \sup_Q\paren{\frac1{|Q|}\int_Q \abs{\Ma b(x)-\Avg_Q(\Ma b)}^r\,dx}^{1/r}
 \le C_r\norm{b}_{\BMOa}.
$$
Hence, the classical mean oscillation norm and every $L^r$ oscillation norm are equivalent on $\BMO_{\alpha}$.
\end{corollary}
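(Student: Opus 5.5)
The plan is to reduce the corollary to the classical statement for $g:=\Ma b$, exactly as in the proof of Theorem~\ref{thm:JN}. By definition of $\BMOa$, $g\in\BMO(\R^n)$ and $\norm{b}_{\BMOa}=\norm{g}_{\BMO}$. The middle quantity in the corollary is literally $\sup_Q\bigl(\frac1{|Q|}\int_Q|g-\Avg_Q g|^r\bigr)^{1/r}$. So the claim is the classical $L^r$-oscillation characterization of $\BMO$ applied to $g$. I would nonetheless write out its short derivation from Theorem~\ref{thm:JN}, so that the constants visibly depend only on $n$ and $r$.

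\textbf{Lower bound.} Fix a cube $Q$. Hölder's inequality on $Q$ with the normalized measure $dx/|Q|$ gives
$$\frac1{|Q|}\int_Q|g-\Avg_Q g|\,dx\le\Bigl(\frac1{|Q|}\int_Q|g-\Avg_Q g|^r\,dx\Bigr)^{1/r}.$$
Taking the supremum over $Q$ yields the left inequality with $c_r=1$.

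\textbf{Upper bound.} Use the layer-cake formula together with the distributional bound of Theorem~\ref{thm:JN}:
$$\int_Q|g-\Avg_Q g|^r\,dx=r\int_0^\infty\lambda^{r-1}\bigl|\{x\in Q:|g-\Avg_Q g|>\lambda\}\bigr|\,d\lambda\le rC_1|Q|\int_0^\infty\lambda^{r-1}e^{-C_2\lambda/\norm{b}_{\BMOa}}\,d\lambda .$$
The last integral equals $\Gamma(r)\,(\norm{b}_{\BMOa}/C_2)^r$. Dividing by $|Q|$, taking $r$-th roots and the supremum over $Q$ gives the right inequality with
$$C_r=(C_1\Gamma(r+1))^{1/r}/C_2,$$
which depends only on $n$ and $r$ because $C_1,C_2$ do.

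Two degenerate points need care. If $\norm{b}_{\BMOa}=0$, then $g$ is a.e.\ constant, so $b\in\calC_{\alpha}$ and both sides vanish; this is consistent with treating $\BMOa$ modulo $\calC_{\alpha}$. One should also check that averages of $g$ are meaningful, i.e.\ that $g$ is locally integrable; this holds since $g\in\BMO$ and $|g|=|b|$. I do not expect a genuine obstacle here. The only substantive input is the exponential decay of Theorem~\ref{thm:JN}, and it is applied verbatim to $g=\Ma b$. Alternatively, one can invoke Proposition~\ref{prop:general} directly with the classical $L^r$–$\BMO$ equivalence as the classical estimate.
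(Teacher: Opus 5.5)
Your proposal is correct and matches the paper's proof: you set $g=\Ma b$, get the lower bound with $c_r=1$ from H\"older's inequality on each cube, and get the upper bound from the layer-cake formula combined with the decay in Theorem~\ref{thm:JN}. Your constant $C_r^r=C_1\Gamma(r+1)C_2^{-r}=C_1 r\Gamma(r)C_2^{-r}$ is exactly the paper's, and your remarks on the degenerate case $\norm{b}_{\BMOa}=0$ and on local integrability of $g$ add detail the paper leaves implicit.
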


\begin{proof}
Set again $g=\Ma b$. The lower bound is elementary: for each cube $Q$, H\"{o}lder's inequality gives
$$
\frac1{|Q|}\int_Q \abs{g-g_Q}\,dx
\le \paren{\frac1{|Q|}\int_Q \abs{g-g_Q}^r\,dx}^{1/r},
$$
where $g_Q:=\Avg_Q(g)$.  Taking the supremum over $Q$ yields
$$
\norm{g}_{\BMO}
\le \sup_Q\paren{\frac1{|Q|}\int_Q \abs{g-g_Q}^r\,dx}^{1/r}.
$$
Since $\norm{g}_{\BMO}=\norm{b}_{\BMOa}$, this proves the left-hand inequality with $c_r=1$.

For the upper bound, fix a cube $Q$ and write $u(x):=\abs{g(x)-g_Q}$. By the layer-cake representation,
$$
\frac1{|Q|}\int_Q u(x)^r\,dx
=\frac{r}{|Q|}\int_0^{\infty}\lambda^{r-1}\bigl|\{x\in Q:u(x)>\lambda\}\bigr|\,d\lambda.
$$

Applying Theorem \ref{thm:JN} to $g$ gives
$$
\bigl|\{x\in Q:u(x)>\lambda\}\bigr|
\le C_1|Q|\exp\!\paren{-\frac{C_2\lambda}{\norm{g}_{\BMO}}}.
$$

Substituting this into the layer-cake formula, we obtain
$$
\frac1{|Q|}\int_Q u(x)^r\,dx
\le C_1 r\int_0^{\infty}\lambda^{r-1}
\exp\!\paren{-\frac{C_2\lambda}{\norm{g}_{\BMO}}}
\,d\lambda.
$$

After the change of variables $\lambda=\norm{g}_{\BMO}t/C_2$, this becomes
$$
\frac1{|Q|}\int_Q u(x)^r\,dx
\le C_1 r C_2^{-r}\Gamma(r)\,\norm{g}_{\BMO}^r.
$$

Taking the $r$-th root and then the supremum over cubes gives
$$
\sup_Q\paren{\frac1{|Q|}\int_Q \abs{g-g_Q}^r\,dx}^{1/r}
\le C_r\norm{g}_{\BMO}=C_r\norm{b}_{\BMOa}.
$$

This is the required upper bound.
\end{proof}

\section{FrFT BMO-Carleson theorem}

Let $T(Q):=Q\times (0,\ell(Q)]$ be the tent above a cube $Q$, where $\ell(Q)$ denotes its sidelength. We assume that $\Psi\in\calS(\R^n)$ satisfies
$$
\int_{\R^n}\Psi(x)\,dx=0
$$
and the usual classical nondegeneracy condition ensuring the converse BMO-Carleson theorem, for instance
$$
\inf_{\xi\neq 0}\sup_{t>0}\abs{\widehat{\Psi}(t\xi)}>0.
$$

\begin{theorem}[\textbf{FrFT BMO-Carleson theorem}]\label{thm:BMOCarleson}
For $b\in\calS'(\R^n)$, the following are equivalent:
\begin{enumerate}[label=\textup{(\roman*)}]
\item $b\in \BMOa(\R^n)$;
\item the measure $\mu_b^{\alpha}$ defined by
$$
 d\mu_b^{\alpha}(x,t)=\abs{\Psi_t^{\alpha}b(x)}^2\,dx\,\frac{dt}{t}
$$
is a Carleson measure on $\R^{n+1}_+$, that is,
$$
\sup_Q \frac{\mu_b^{\alpha}(T(Q))}{|Q|}<\infty.
$$
\end{enumerate}
Moreover, there exist constants $c,C>0$, depending only on $n$ and $\Psi$, such that
$$
 c\norm{b}_{\BMOa}
 \le
 \sup_Q\paren{\frac1{|Q|}\int_{T(Q)}\abs{\Psi_t^{\alpha}b(x)}^2\,dx\,\frac{dt}{t}}^{1/2}
 \le
 C\norm{b}_{\BMOa}.
$$
\end{theorem}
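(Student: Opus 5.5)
The plan is to reduce everything to the classical BMO--Carleson theorem (Fefferman--Stein) for the single function $g:=\Ma b$, in the same way Theorem~\ref{thm:JN} was obtained from Proposition~\ref{prop:general}.

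First, since $\Ma$ and $\Mai$ preserve $\calS'(\R^n)$ by Lemma~\ref{lem:conjprin}, we have $g\in\calS'(\R^n)$. By the definition of $\BMOa$, $b\in\BMOa$ if and only if $g\in\BMO$, and $\norm{b}_{\BMOa}=\norm{g}_{\BMO}$.

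Second, I will establish the pointwise identity of the integrands. By definition,
$$\Psi_t^{\alpha}b=\Mai\bigl(\Psi_t*g\bigr),$$
so unimodularity of the chirp gives
$$\abs{\Psi_t^{\alpha}b(x)}=\abs{\Psi_t*g(x)}$$
for every $t>0$ and almost every $x$. Hence $d\mu_b^{\alpha}=d\mu_g$ as measures on $\R^{n+1}_+$, where $d\mu_g:=\abs{\Psi_t*g(x)}^2\,dx\,dt/t$ is the classical measure. In particular, for every cube $Q$,
$$\mu_b^{\alpha}(T(Q))=\mu_g(T(Q)).$$
Consequently the Carleson quantity in (ii) and the middle term of the two-sided estimate coincide exactly with the classical ones for $g$.

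Third, I will invoke the classical theorem. If $\int\Psi=0$ and $\Psi\in\calS$, then $g\in\BMO$ implies that $\mu_g$ is Carleson with $\sup_Q(\mu_g(T(Q))/|Q|)^{1/2}\le C\norm{g}_{\BMO}$. Conversely, under the nondegeneracy condition $\inf_{\xi\ne0}\sup_{t>0}|\widehat\Psi(t\xi)|>0$, a Carleson bound on $\mu_g$ yields $g\in\BMO$ modulo constants, with $c\norm{g}_{\BMO}\le\sup_Q(\cdots)^{1/2}$. Translating back through $\norm{g}_{\BMO}=\norm{b}_{\BMOa}$ gives both implications and the two-sided estimate with the same constants $c,C$, which depend only on $n$ and $\Psi$.

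The main obstacle is the converse direction (ii)$\Rightarrow$(i) at the level of distributions. The classical converse, proved by duality with $H^1$ through a Calder\'on reproducing formula, identifies $g$ only modulo polynomials, and it needs an a priori growth condition to exclude polynomials of positive degree, whose $\Psi_t*P$ can vanish identically. For this step I would first restrict to the natural class where the classical theorem is stated: $g$ is tempered with $\Psi_t*g$ well defined, and $g$ is taken modulo constants. On the FrFT side this means $b$ is taken modulo $\calC_\alpha$, and modulo $\Palpha$ if needed, in parallel with Theorem~\ref{thm:reconstruct}. I would then check that this ambiguity transfers exactly under $\Mai$, using Lemma~\ref{lem:chirp-poly}.
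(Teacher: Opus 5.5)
Your proposal matches the paper's proof. Set $g=\Ma b$, use unimodularity to get $\abs{\Psi_t^{\alpha}b}=\abs{\Psi_t*g}$, hence $\mu_b^{\alpha}=\mu_g$ and $\norm{b}_{\BMOa}=\norm{g}_{\BMO}$, and pull back the classical BMO--Carleson theorem. Your caveat on (ii)$\Rightarrow$(i) is genuine and is not addressed in the paper's proof, which applies the classical converse to arbitrary $g\in\calS'(\R^n)$. For example, take $\widehat\Psi=\varphi$, the Littlewood--Paley bump; it is nondegenerate and all of its moments vanish. Then $b=\Mai(x_1)$ gives $\mu_b^{\alpha}=0$ while $\Ma b=x_1\notin\BMO$. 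So for the statement as written, the quotient by $\Palpha$ (or an a priori growth condition on $\Ma b$) is really required, not merely ``if needed''.
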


\begin{proof}
Let $g:=\Ma b$. By Lemma \ref{lem:isom}, we have that for all $(x,t)\in \R^{n+1}_+$
$$
\abs{\Psi_t^{\alpha}b(x)}=\abs{\Psi_t*g(x)}.
$$
It is immediately obtain that the upper-half-space measure $\mu_b^{\alpha}$ is the classical square-function measure $\mu_g$ associated with $g$. Since also $\norm{g}_{\BMO}=\norm{b}_{\BMOa}$, the present statement is precisely the pullback of the classical BMO-Carleson theorem under $g=\Ma b$. Both implications and the displayed norm equivalence therefore follow immediately from the classical theorem.
\end{proof}

\section{Sharp maximal estimates and endpoint bounds under chirp conjugation}

\begin{proposition}[\textbf{Sharp-maximal characterization of $\BMO_{\alpha}$}]\label{prop:sharp}
Let $f$ be locally integrable function. Then, we have
$$
 f\in\BMOa(\R^n)
 \quad\Longleftrightarrow\quad
 \Msharp_{\alpha}f\in L^{\infty}(\R^n).
$$
Moreover, there exist constants $c,C>0$ depending only on the dimension such that
$$
 c\norm{f}_{\BMOa}
 \le
 \norm{\Msharp_{\alpha}f}_{L^{\infty}}
 \le
 C\norm{f}_{\BMOa}.
$$
\end{proposition}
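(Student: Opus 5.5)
The plan is to reduce everything to the classical sharp-maximal characterization of $\BMO$ via the substitution $g:=\Ma f$, in the spirit of Proposition~\ref{prop:general}. First I record that $g$ is again locally integrable. Indeed, by Lemma~\ref{lem:isom} we have $|g|=|f|$ pointwise almost everywhere, so $\Avg_Q|g|=\Avg_Q|f|<\infty$ for every cube $Q$. Thus $\Msharp g$ is well defined. By definition $\Msharp_{\alpha}f=\Msharp(\Ma f)=\Msharp g$. Likewise $f\in\BMOa$ if and only if $g\in\BMO$, and then $\norm{f}_{\BMOa}=\norm{g}_{\BMO}$.

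Next I invoke the classical statement for locally integrable $g$: $g\in\BMO$ if and only if $\Msharp g\in L^\infty$, with
$$
c\norm{g}_{\BMO}\le\norm{\Msharp g}_{L^\infty}\le C\norm{g}_{\BMO}.
$$
This is essentially immediate once the conventions are fixed. If $\Msharp$ is defined with centered or uncentered cubes containing $x$, namely
$$
\Msharp g(x)=\sup_{Q\ni x}\frac1{|Q|}\int_Q|g-\Avg_Q g|,
$$
then $\norm{\Msharp g}_{L^\infty}=\norm{g}_{\BMO}$ exactly, and one may take $c=C=1$. If instead balls are used, or the infimum over constants, one gets comparability with dimensional constants. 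In the infimum-over-constants case, for instance, $\inf_c\Avg_Q|g-c|\le\Avg_Q|g-g_Q|\le 2\inf_c\Avg_Q|g-c|$. Substituting $g=\Ma f$ into both the equivalence and the two-sided inequality gives the claim, with the same constants.

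The only point that needs care, and which I regard as the main (mild) obstacle, is the bookkeeping of definitions. The oscillation $\abs{g-\Avg_Q g}$ is \emph{not} equal to $\abs{f-\Avg_Q f}$, because the chirp does not commute with averaging. So the general principle must be applied to $g$ itself and not to $f$: both sides of the desired inequality are expressed through $g=\Ma f$, and no modulus-invariance of the oscillation is needed. I would also note that the $\BMOa$ condition is understood modulo $\calC_\alpha$, which matches the classical quotient of $\BMO$ by constants under $\Mai$. This consistency makes the equivalence well posed.
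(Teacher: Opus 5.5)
Your proposal is correct and follows the paper's own argument. You set $g=\Ma f$, note that $\Msharp_{\alpha}f=\Msharp g$ and $\norm{f}_{\BMOa}=\norm{g}_{\BMO}$, and pull back the classical Fefferman--Stein characterization. Your extra remarks are accurate refinements of points the paper leaves implicit: $g$ is locally integrable since $|g|=|f|$, the constants can be taken equal to $1$ for the cube-based definition, and the argument must pass through $g$ itself because mean oscillation is not chirp-invariant.
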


\begin{proof}
This is again an immediate application of Proposition \ref{prop:general}, now to the classical Fefferman-Stein characterization
$$
 g\in\BMO(\R^n)
 \quad\Longleftrightarrow\quad
 \Msharp g\in L^{\infty}(\R^n),
\qquad
 \norm{g}_{\BMO}\asymp \norm{\Msharp g}_{L^{\infty}}.
$$
Indeed, with $g=\Ma f$ one has $\Msharp_{\alpha}f=\Msharp g$ and $\norm{f}_{\BMOa}=\norm{g}_{\BMO}$, so the FrFT statement is the pullback of the classical one.
\end{proof}

\begin{theorem}[\textbf{Endpoint bounds and interpolation consequences}]\label{thm:endpoint}
Let $T$ be a classical linear operator on $\R^n$ and define its FrFT operator by
$$
 T_{\alpha}:=\Mai T\Ma.
$$
Assume that for some exponents $1\le p_0,q_0<\infty$,
$$
\norm{Tg}_{L^{q_0}}\le A\norm{g}_{L^{p_0}}
\qquad\text{for all }g\in L^{p_0}(\R^n),
$$
and
$$
\norm{Tg}_{\BMO}\le B\norm{g}_{L^{\infty}}
\qquad\text{for all }g\in L^{\infty}(\R^n).
$$
Then
$$
\norm{T_{\alpha}f}_{L^{q_0}}\le A\norm{f}_{L^{p_0}},
\qquad
\norm{T_{\alpha}f}_{\BMOa}\le B\norm{f}_{L^{\infty}}.
$$
In addition, every classical estimate deduced from these endpoint bounds by an interpolation argument transfers  to $T_{\alpha}$ with the same constants.
\end{theorem}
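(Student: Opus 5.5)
The plan is to reduce both endpoint bounds to the classical ones through the substitution $g:=\Ma f$, exactly as in Proposition~\ref{prop:21} and Proposition~\ref{prop:general}. By definition, $T_\alpha f=\Mai T(\Ma f)=\Mai Tg$.

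\textbf{The $L^{p_0}\to L^{q_0}$ bound.} Since $\Mai$ is multiplication by a unimodular factor, Lemma~\ref{lem:isom} gives
$$\norm{T_\alpha f}_{L^{q_0}}=\norm{Tg}_{L^{q_0}}\le A\norm{g}_{L^{p_0}}=A\norm{f}_{L^{p_0}}.$$
This is Proposition~\ref{prop:21}(i) verbatim.

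\textbf{The $L^\infty\to\BMOa$ bound.} Here I use the definition $\norm{h}_{\BMOa}:=\norm{\Ma h}_{\BMO}$ rather than any pointwise-modulus argument. Taking $h=T_\alpha f$ gives $\Ma h=\Ma\Mai Tg=Tg$, so
$$\norm{T_\alpha f}_{\BMOa}=\norm{Tg}_{\BMO}\le B\norm{g}_{L^\infty}=B\norm{f}_{L^\infty},$$
where the last equality again uses the isometry of $\Ma$ on $L^\infty$. I would stress in the write-up that $\BMO$ is not preserved by $\Ma$ itself, since the oscillation of $\Ma g$ differs from that of $g$. The transfer works only because $\BMOa$ was defined as the pullback $\Mai(\BMO)$, and the outer $\Mai$ in $T_\alpha$ is cancelled exactly. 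Membership $T_\alpha f\in\BMOa$ should be read modulo $\calC_\alpha$, matching $Tg\in\BMO$ modulo constants.

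\textbf{Interpolation consequences.} This is the only step requiring care. The route I would take: suppose a classical argument (e.g.\ Fefferman--Stein interpolation between $L^{p_0}\to L^{q_0}$ and $L^\infty\to\BMO$) yields $\norm{Tg}_{L^{q}}\le C\norm{g}_{L^{p}}$ with $C$ depending only on $A,B,p_0,q_0,p,q,n$. That conclusion is itself an $L^p\to L^q$ statement for $T$, so Proposition~\ref{prop:21}(i) transfers it to $T_\alpha$ with the same constant. No interpolation has to be redone on the FrFT side, because the intermediate spaces are Lebesgue spaces, on which $\Ma$ is an isometry.

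The obstacle is phrasing "every classical estimate deduced by interpolation" precisely. I would formalize it as: any inequality of the form $\norm{Tg}_{X}\le C\norm{g}_{Y}$ with $X,Y$ rearrangement-invariant (Lebesgue or Lorentz) transfers with the same constant by Proposition~\ref{prop:21} and Proposition~\ref{prop:multiplier and dyadic modules}. Any such estimate with target $\BMO$ transfers to target $\BMOa$ by the cancellation argument above. If the sharp-function route is used classically, I would also note that it passes through unchanged, since $\Msharp_\alpha(T_\alpha f)=\Msharp(Tg)$ by definition, in line with Proposition~\ref{prop:sharp}.
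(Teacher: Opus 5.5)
Your proposal is correct and follows essentially the same route as the paper: substitute $g=\Ma f$, use unimodularity for the $L^{q_0}$ bound, and use the pullback definition $\norm{h}_{\BMOa}=\norm{\Ma h}_{\BMO}$, with $\Ma T_{\alpha}f=Tg$, for the $\BMO$ endpoint. Interpolation consequences are handled as in the paper, by applying the classical result to $g$ and transporting it back, and your extra remarks (that $\BMO$ itself is not $\Ma$-invariant, and that membership is read modulo $\calC_{\alpha}$) are accurate refinements of the paper's terse argument.
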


\begin{proof}
Set $g:=\Ma f$. Then $T_{\alpha}f=\Mai Tg$, so the unimodularity of $\Mai$ and the definition of $\BMOa$ give
$$
\norm{T_{\alpha}f}_{L^{q_0}}=\norm{Tg}_{L^{q_0}},
\qquad
\norm{T_{\alpha}f}_{\BMOa}=\norm{Tg}_{\BMO},
$$
while $\norm{g}_{L^{p_0}}=\norm{f}_{L^{p_0}}$ and $\norm{g}_{L^{\infty}}=\norm{f}_{L^{\infty}}$. The two endpoint bounds therefore follow immediately from the classical assumptions on $T$. Any interpolation consequence is obtained in the same way: apply the classical interpolation theorem to $g$, then transport the resulting estimate back through the isometric identifications $X_{\alpha}=\Mai X$ and $Y_{\alpha}=\Mai Y$.
\end{proof}

\section{Equivalent stability and regime invariance}

For $1<r<\infty$, define the three FrFT-domain stability scores
\begin{align*}
\Omega_{\alpha,r}(b)
&:=\sup_Q\paren{\frac1{|Q|}\int_Q \abs{\Ma b(x)-\Avg_Q(\Ma b)}^r\,dx}^{1/r},
\\
\calC_{\alpha}(b)
&:=\sup_Q\paren{\frac1{|Q|}\int_{T(Q)}\abs{\Psi_t^{\alpha}b(x)}^2\,dx\,\frac{dt}{t}}^{1/2},
\\
\calM_{\alpha}(b)
&:=\norm{\Msharp_{\alpha} b}_{L^{\infty}}.
\end{align*}

\begin{theorem}[\textbf{Equivalent oscillation, Carleson, and sharp-maximal function in $\BMOa $}]\label{thm:equivalent}
For every $b\in\BMOa(\R^n)$ and every $1<r<\infty$,
$$
\Omega_{\alpha,r}(b)\asymp \calC_{\alpha}(b)\asymp \calM_{\alpha}(b)\asymp \norm{b}_{\BMOa},
$$
where the equivalence constants depend only on $n$, $r$, and the choice of $\Psi$.
Consequently, Consequently, for every family $\{b_{\alpha,e}\}_{e\in E}$ indexed,
$$
\sup_{e\in E}\Omega_{\alpha,r}(b_{\alpha,e})<\infty
\iff
\sup_{e\in E}\calC_{\alpha}(b_{\alpha,e})<\infty
\iff
\sup_{e\in E}\calM_{\alpha}(b_{\alpha,e})<\infty.
$$
\end{theorem}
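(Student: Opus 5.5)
The plan is to chain the three equivalences already established, each of which compares one score with $\norm{b}_{\BMOa}$. First, Corollary~\ref{cor:LrBMOa} gives, for every $1<r<\infty$,
$$
c_r\norm{b}_{\BMOa}\le \Omega_{\alpha,r}(b)\le C_r\norm{b}_{\BMOa},
$$
with $c_r,C_r$ depending only on $n,r$. Second, Theorem~\ref{thm:BMOCarleson} gives $c\norm{b}_{\BMOa}\le \calC_{\alpha}(b)\le C\norm{b}_{\BMOa}$, with constants depending only on $n$ and $\Psi$ (under the standing mean-zero and nondegeneracy assumptions on $\Psi$). Third, Proposition~\ref{prop:sharp} gives $c\norm{b}_{\BMOa}\le \calM_{\alpha}(b)\le C\norm{b}_{\BMOa}$ with dimensional constants.

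Proposition~\ref{prop:sharp} requires $b$ to be locally integrable. This holds because $b\in\BMOa$ means $\Ma b\in\BMO\subset L^1_{\mathrm{loc}}$, and $|b|=|\Ma b|$ pointwise. Since each score is comparable to $\norm{b}_{\BMOa}$, transitivity of $\asymp$ yields the full chain. The resulting constants are products of the individual ones and depend only on $n$, $r$ and $\Psi$.

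For the family statement, I apply the two-sided bounds to each $b_{\alpha,e}$. The constants do not depend on $e$, so taking suprema over $e\in E$ shows that each of $\sup_e\Omega_{\alpha,r}$, $\sup_e\calC_{\alpha}$ and $\sup_e\calM_{\alpha}$ is finite if and only if $\sup_e\norm{b_{\alpha,e}}_{\BMOa}$ is finite. This gives the stated equivalences.

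The only delicate point is bookkeeping about the quotient by $\calC_{\alpha}$. All three scores vanish on chirped constants $c\,e^{-i\pi|x|^2\cot\alpha}$, because $\Ma$ maps such a function to the constant $c$. Its oscillation, its sharp maximal function and $\Psi_t*c$ (using $\int\Psi=0$) are then all zero. So every quantity is well defined on $\BMOa$ modulo $\calC_{\alpha}$, and the comparisons remain consistent there. Beyond checking this, I expect no real obstacle, since the analytic content is entirely contained in the cited results.
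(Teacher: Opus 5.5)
Your proposal is correct and follows the paper's proof essentially verbatim: chain Corollary~\ref{cor:LrBMOa}, Theorem~\ref{thm:BMOCarleson} and Proposition~\ref{prop:sharp} through $\norm{b}_{\BMOa}$, then take suprema over $e\in E$ using that the constants are independent of $e$. Your extra checks are sound bookkeeping that the paper leaves implicit: local integrability of $b$ (needed for Proposition~\ref{prop:sharp}), and the vanishing of all three scores on $\calC_{\alpha}$.
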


\begin{proof} This proof is trivial. Indeed,
Corollary \ref{cor:LrBMOa} gives  $
\Omega_{\alpha,r}(b)\asymp \norm{b}_{\BMOa}.$
Theorem \ref{thm:BMOCarleson} gives
$$
\calC_{\alpha}(b)\asymp \norm{b}_{\BMOa}.
$$
Proposition \ref{prop:sharp} gives
$$
\calM_{\alpha}(b)\asymp \norm{b}_{\BMOa}.
$$
Combining the three equivalences yields the first assertion. The statement for families follows by taking suprema over $e\in E$.
\end{proof}

\begin{corollary}[\textbf{Operational regime invariance under score change}]\label{cor:regime}
Let $S_1,S_2\in\{\Omega_{\alpha,r},\calC_{\alpha},\calM_{\alpha}\}$. Then there exist constants $c,C>0$ such that
$$
 cS_1(b)\le S_2(b)\le C S_1(b)
\qquad\text{for all }b\in\BMOa(\R^n).
$$
Hence a stable, warning, or failure partition defined by thresholds for one score is unchanged up to a deterministic recalibration of thresholds for any other score.
\end{corollary}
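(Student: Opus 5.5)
The plan is to deduce Corollary~\ref{cor:regime} directly from Theorem~\ref{thm:equivalent}, passing through the common reference quantity $\norm{b}_{\BMOa}$. For each score $S\in\{\Omega_{\alpha,r},\calC_{\alpha},\calM_{\alpha}\}$, Theorem~\ref{thm:equivalent} (assembled from Corollary~\ref{cor:LrBMOa}, Theorem~\ref{thm:BMOCarleson} and Proposition~\ref{prop:sharp}) gives constants $0<a_S\le A_S<\infty$, depending only on $n$, $r$ and $\Psi$, with
$$
a_S\norm{b}_{\BMOa}\le S(b)\le A_S\norm{b}_{\BMOa}\qquad\text{for all } b\in\BMOa(\R^n).
$$

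Given $S_1,S_2$ from the list, I chain the two-sided bounds:
$$
S_2(b)\le A_{S_2}\norm{b}_{\BMOa}\le \frac{A_{S_2}}{a_{S_1}}S_1(b)
$$
and
$$
S_2(b)\ge a_{S_2}\norm{b}_{\BMOa}\ge \frac{a_{S_2}}{A_{S_1}}S_1(b).
$$
This yields the claim with $c=a_{S_2}/A_{S_1}$ and $C=A_{S_2}/a_{S_1}$. Because everything is defined through $g=\Ma b$, these constants are exactly the classical ones and do not depend on $\alpha$. The trivial case $S_1=S_2$ is covered by $c=C=1$.

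The \emph{threshold recalibration} statement then follows. A partition by thresholds $\tau_1<\tau_2$ for $S_1$ (stable if $S_1\le\tau_1$, failure if $S_1>\tau_2$) is sandwiched by the partition for $S_2$ with thresholds $c\tau_1$ and $C\tau_2$. Indeed, $S_1(b)\le\tau_1$ implies $S_2(b)\le C\tau_1$, and $S_2(b)\le c\tau_1$ implies $S_1(b)\le\tau_1$. So the regimes are unchanged up to a deterministic rescaling of thresholds by factors in $[c,C]$.

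The only delicate point is that $\norm{b}_{\BMOa}$ is a seminorm vanishing on the chirped constants $\calC_{\alpha}$. The equivalences must therefore be read modulo $\calC_\alpha$, but all three scores also vanish there: $\Ma$ maps $\calC_\alpha$ to constants, which have zero oscillation, zero sharp maximal function, and zero $\Psi_t$-convolution since $\int\Psi=0$. Hence no inconsistency arises and no additional argument is needed.
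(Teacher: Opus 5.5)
Your proposal is correct and follows essentially the same route as the paper. Both deduce the two-sided comparison from Theorem~\ref{thm:equivalent} by passing through $\norm{b}_{\BMOa}$, and then read off the threshold recalibration from the resulting level-set inclusions. Your version is a bit more explicit than the paper's: it records the constants $c=a_{S_2}/A_{S_1}$ and $C=A_{S_2}/a_{S_1}$, and it checks that all three scores vanish on $\calC_{\alpha}$, which the paper only covers with the phrase ``modulo chirped constants''.
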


\begin{proof}
Theorem \ref{thm:equivalent} shows that $S_1$ and $S_2$ are equivalent norms on $\BMOa$ modulo chirped constants. Thus the level sets of $S_1$ and $S_2$ differ only by the constants $c$ and $C$. For instance,
$$
 S_1(b)\le \tau
 \implies
 S_2(b)\le C\tau,
\qquad
 S_2(b)>\tau
 \implies
 S_1(b)>\tau/C.
$$
This is the claimed regime invariance after recalibration.
\end{proof}

\section{FrFT Hardy square functions and FrFT atomic decomposition}

Fix $0<p\le 1$ and choose a standard homogeneous Littlewood-Paley operator $\{\Delta_j\}_{j\in\Z}$. Recall that classically, for $g\in\calS'(\R^n)$ one has
$$
S(g):=\paren{\sum_{j\in\Z}\abs{\Delta_j g}^2}^{1/2}.
$$

\begin{definition}[\textbf{FrFT atoms}]
Let $0<p\le 1$ and $1<q\le\infty$. A function $A_{\alpha}$ is called an FrFT $L^q$ atom if there exists a cube $Q\subset\R^n$ such that
\begin{enumerate}[label=\textup{(\roman*)}]
\item $\supp(A_{\alpha})\subset Q$,
\item $\norm{A_{\alpha}}_{L^q}\le |Q|^{1/q-1/p}$,
\item for every multi-index $\gamma$ with $|\gamma|\le \lfloor n(1/p-1)\rfloor$,
$$
\int_{\R^n} x^{\gamma} e^{i\pi |x|^2\cot\alpha}A_{\alpha}(x)\,dx=0.
$$
\end{enumerate}
\end{definition}

\begin{proposition}[\textbf{Hardy identities under chirp conjugation}]\label{prop:HardyTransport}
Let $0<p\le 1$, $1<q\le \infty$, and write $g:=\Ma f$.
\begin{enumerate}[label=\textup{(\roman*)}]
\item One has $\norm{f}_{\Hpa}=\norm{g}_{\Hp}$ and $S_{\alpha}(f)=S(g)$ pointwise.
\item A function $Q_{\alpha}$ belongs to $\Palpha$ if and only if $\Ma Q_{\alpha}\in\calP$.
\item A function $A_{\alpha}$ is an FrFT $L^q$ atom if and only if $a:=\Ma A_{\alpha}$ is a classical $L^q$ atom.
\end{enumerate}
Hence every classical theorem formulated only in terms of $\Hp$ norms, Littlewood-Paley square functions, polynomial ambiguity, or atomic decompositions  to the FrFT side under conjugation by $\Ma$.
\end{proposition}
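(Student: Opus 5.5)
The plan is to unfold the definitions item by item, using only that $\Ma$ is multiplication by the smooth unimodular chirp $e^{i\pi|x|^2\cot\alpha}$, with inverse $\Mai$ given by the conjugate chirp. By Lemma~\ref{lem:conjprin}, $\Ma$ preserves $\calS'(\R^n)$ bijectively. Every object in the statement is defined as a pullback under $\Ma$ or $\Mai$, so each identity should reduce to one line.

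For (i), the norm identity is the definition itself: $\norm{f}_{\Hpa}:=\norm{\Ma f}_{\Hp}=\norm{g}_{\Hp}$. For the square function, write
\[
\Delta_{j,\alpha}f=\Mai\Delta_j\Ma f=\Mai\Delta_j g .
\]
Since $\Delta_j g$ is a smooth function for every tempered $g$, multiplying it by the unimodular factor gives $\abs{\Delta_{j,\alpha}f}=\abs{\Delta_j g}$ pointwise. Summing squares over $j$ yields $S_\alpha(f)=S(g)$ everywhere; this is exactly the computation of Step 1 of Theorem~\ref{thm:FrFTL-P}.

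For (ii), the chirped-polynomial class is $\Palpha=\Mai\calP$. If $Q_\alpha=\Mai P$ with $P\in\calP$, then $\Ma Q_\alpha=P\in\calP$. Conversely, if $\Ma Q_\alpha=P\in\calP$, then $Q_\alpha=\Mai P\in\Palpha$, using $\Ma\Mai=\mathrm{Id}$.

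For (iii), set $a=\Ma A_\alpha$ and check the three atom conditions one at a time.
\begin{enumerate}[label=\textup{(\alph*)}]
\item Support: Lemma~\ref{lem:isom} gives $\supp a=\supp A_\alpha$, so one of them lies in $Q$ exactly when the other does.
\item Size: the same lemma gives $\norm{a}_{L^q}=\norm{A_\alpha}_{L^q}$, so the size condition $\le |Q|^{1/q-1/p}$ transfers in both directions.
\item Moments: the FrFT moment condition $\int x^\gamma e^{i\pi|x|^2\cot\alpha}A_\alpha(x)\,dx=0$ is literally $\int x^\gamma a(x)\,dx=0$, for the same range $|\gamma|\le\lfloor n(1/p-1)\rfloor$. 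This is the classical vanishing-moment condition for $a$.
\end{enumerate}
Each step is reversible, which gives the equivalence.

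For the concluding "hence" statement, argue as in Proposition~\ref{prop:general}. Take any classical theorem phrased in terms of $\Hp$ norms, $S$, $\calP$, or atoms, and apply it to $g=\Ma f$. Then translate back through (i)--(iii): an atomic decomposition $g=\sum\lambda_k a_k$ converging in $\calS'$ becomes $f=\sum\lambda_k\Mai a_k$, because $\Mai$ is continuous on $\calS'$.

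The only step needing care is this last one, the convergence of series under $\Mai$. I would handle it with the duality argument of Lemma~\ref{lem:conjprin}(1): multiplication by the chirp is a continuous map of $\calS$ to itself, hence its transpose is continuous on $\calS'$.
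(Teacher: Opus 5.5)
Your proposal is correct and matches the paper's proof. Items (i) and (ii) are read off from the definitions together with $\abs{\Delta_{j,\alpha}f}=\abs{\Delta_j g}$. Item (iii) follows because unimodularity preserves support and $L^q$ norms, while the chirped moment integral is literally $\int x^\gamma a(x)\,dx$. Your extra remarks simply make explicit what the paper leaves implicit in the concluding ``hence'' clause: that $\Delta_j g$ is smooth for tempered $g$, and that $\Mai$ is continuous on $\calS'$, which is what lets atomic series be transported.
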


\begin{proof}
Part \textup{(i)} is immediate from the definitions and Lemma \ref{lem:isom}, since $\Delta_{j,\alpha}f=\Mai\Delta_j g$ and therefore $\abs{\Delta_{j,\alpha}f}=\abs{\Delta_j g}$. Part \textup{(ii)} is the definition $\Palpha=\Mai\calP$. For part \textup{(iii)}, unimodularity preserves support and $L^q$ norms, while
$$
\int_{\R^n}x^{\gamma}e^{i\pi|x|^2\cot\alpha}A_{\alpha}(x)\,dx
=\int_{\R^n}x^{\gamma}a(x)\,dx.
$$
Thus, the chirped moment conditions for $A_{\alpha}$ are the classical vanishing moments for $a$.
\end{proof}

\begin{theorem}[\textbf{FrFT Hardy square-function characterization}]\label{thm:HardySquare}
Let $0<p\le 1$.
\begin{enumerate}[label=\textup{(\roman*)}]
\item If $f\in\Hpa(\R^n)$, then
$$
\norm{f}_{\Hpa}\asymp \norm{S_{\alpha}(f)}_{L^p}
=\norm{\paren{\sum_{j\in\Z}\abs{\Delta_{j,\alpha}f}^2}^{1/2}}_{L^p}.
$$

\item Conversely, if $S_{\alpha}(f)\in L^p(\R^n)$, then there exists a unique $Q_{\alpha}\in \Palpha$ such that
$$
 f-Q_{\alpha}\in\Hpa(\R^n)\quad\text{and}\quad \norm{f-Q_{\alpha}}_{\Hpa}\asymp \norm{S_{\alpha}(f)}_{L^p}.
$$
\end{enumerate}
The equivalence constants depend only on $n$, $p$, and the choice of Littlewood-Paley partition.
\end{theorem}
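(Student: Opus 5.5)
The plan is to reduce everything to the classical Hardy-space square-function theorem through the substitution $g:=\Ma f$, using Proposition~\ref{prop:HardyTransport}. Throughout I interpret $f\in\calS'(\R^n)$. Lemma~\ref{lem:conjprin}(1) guarantees that $g=\Ma f$ is again tempered, so the classical Littlewood--Paley operators $\Delta_j g$ make sense.

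\textbf{Step 1 (pointwise identification).} By Proposition~\ref{prop:HardyTransport}(i), we have $\Delta_{j,\alpha}f=\Mai\Delta_j g$. Hence $|\Delta_{j,\alpha}f|=|\Delta_j g|$ almost everywhere for every $j$. Summing over $j$ gives $S_\alpha(f)=S(g)$ pointwise, so $\norm{S_\alpha(f)}_{L^p}=\norm{S(g)}_{L^p}$. Also $\norm{f}_{\Hpa}=\norm{g}_{\Hp}$ by definition of $\Hpa$.

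\textbf{Step 2 (part (i)).} If $f\in\Hpa$, then $g\in\Hp$. The classical characterization $\norm{g}_{\Hp}\asymp\norm{S(g)}_{L^p}$, valid for $0<p\le1$ with constants depending on $n$, $p$ and the partition, then yields (i) at once by Step 1.

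\textbf{Step 3 (part (ii)).} Suppose $S_\alpha(f)\in L^p$. Then $S(g)\in L^p$. The classical converse says there is a polynomial $P$, unique modulo nothing, such that $g-P\in\Hp$ and $\norm{g-P}_{\Hp}\asymp\norm{S(g)}_{L^p}$. Here one uses that $\Delta_j$ annihilates polynomials and that $\Hp$ distributions vanish at infinity in the weak sense. Set $Q_\alpha:=\Mai P\in\Palpha$. Then $f-Q_\alpha=\Mai(g-P)\in\Hpa$ with $\norm{f-Q_\alpha}_{\Hpa}=\norm{g-P}_{\Hp}$, which together with Step 1 gives the norm equivalence.

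For uniqueness, suppose $Q_\alpha,Q_\alpha'\in\Palpha$ both work. Then $\Ma(Q_\alpha-Q_\alpha')$ is a polynomial lying in $\Hp$ by Proposition~\ref{prop:HardyTransport}(ii). The only polynomial in $\Hp$ is $0$, for instance because elements of $\Hp$ satisfy $\varphi_t*h\to0$ as $t\to\infty$ for Schwartz $\varphi$ with $\int\varphi=1$, whereas a nonzero polynomial does not. This is the $\Hp$ analogue of Lemma~\ref{lem:chirp-poly}, and it cannot simply be quoted from it since $p\le1$ and $\Hp\ne L^p$.

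\textbf{Main obstacle.} The only delicate point is precision about the classical converse theorem at $p\le1$, namely that $S(g)\in L^p$ forces $g$ into $\Hp$ modulo polynomials. I would cite it in the form found in Grafakos (Modern Fourier Analysis, Ch.~2) and check that its hypotheses on $g$ are satisfied by $\Ma f$. Temperedness is the only such hypothesis, and it is guaranteed by Lemma~\ref{lem:conjprin}(1). Everything else is transported by unimodularity.
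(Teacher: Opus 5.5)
Your proposal is correct and follows the paper's proof. Both conjugate by $\Ma$, identify $S_\alpha(f)=S(g)$ and $\norm{f}_{\Hpa}=\norm{g}_{\Hp}$ for $g=\Ma f$, and pull back the classical Hardy square-function theorem with $Q_\alpha=\Mai P$. Your separate uniqueness argument (no nonzero polynomial lies in $\Hp$) is valid but not needed. The paper simply transports the classical uniqueness through the bijection $\Palpha\ni Q_\alpha\mapsto \Ma Q_\alpha\in\calP$, since $f-Q_\alpha\in\Hpa$ if and only if $g-\Ma Q_\alpha\in\Hp$.
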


\begin{proof}
By Proposition \ref{prop:HardyTransport}, the present statement is the pullback of the classical Hardy square-function theorem under $g=\Ma f$: part \textup{(i)} gives the identities $\norm{f}_{\Hpa}=\norm{g}_{\Hp}$ and $S_{\alpha}(f)=S(g)$, while part \textup{(ii)} identifies the FrFT polynomial ambiguity $Q_{\alpha}\in\Palpha$ with the polynomial ambiguity $P=\Ma Q_{\alpha}\in\calP$. Therefore the classical theorem for $g$ is equivalent, after conjugation by $\Ma$, to the displayed FrFT statement, including the uniqueness of $Q_{\alpha}$.
\end{proof}

\begin{theorem}[\textbf{FrFT atomic decomposition}]\label{thm:atomic}
Let $0<p\le 1$ and $1<q\le\infty$. A tempered distribution $f$ belongs to $\Hpa(\R^n)$ if and only if there exist FrFT $L^q$ atoms $\{A_{\nu,\alpha}\}_{\nu}$ and coefficients $\{\lambda_{\nu}\}_{\nu}$ such that
$$
 f=\sum_{\nu}\lambda_{\nu}A_{\nu,\alpha}
 \qquad\text{in }\calS'(\R^n)
$$
and
$
\displaystyle\sum_{\nu}\abs{\lambda_{\nu}}^p<\infty.
$
Moreover,
$$
\norm{f}_{\Hpa}
\asymp
\inf\paren{\sum_{\nu}\abs{\lambda_{\nu}}^p}^{1/p},
$$
where the infimum runs over all such atomic representations.
\end{theorem}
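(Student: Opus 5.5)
The plan is to transport the classical atomic decomposition of $\Hp(\R^n)$ through the chirp multiplier, exactly as in the proof of Theorem~\ref{thm:HardySquare}. Set $g:=\Ma f$. By the definition $\Hpa=\Mai(\Hp)$ and Lemma~\ref{lem:conjprin}(1), $f\in\calS'$ lies in $\Hpa$ if and only if $g\in\calS'$ lies in $\Hp$. In that case $\norm{f}_{\Hpa}=\norm{g}_{\Hp}$. The argument is then a dictionary between atomic representations of $f$ and of $g$, obtained from Proposition~\ref{prop:HardyTransport}(iii).

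\emph{Direct implication.} Suppose $f\in\Hpa$. Then $g\in\Hp$, and the classical atomic decomposition gives $L^q$ atoms $a_\nu$ and coefficients $\lambda_\nu$ with $g=\sum_\nu\lambda_\nu a_\nu$ in $\calS'$. These satisfy $\paren{\sum_\nu\abs{\lambda_\nu}^p}^{1/p}\lesssim\norm{g}_{\Hp}$. Put $A_{\nu,\alpha}:=\Mai a_\nu$. By Proposition~\ref{prop:HardyTransport}(iii), each $A_{\nu,\alpha}$ is an FrFT $L^q$ atom. Indeed, support and $L^q$ norm are unchanged by Lemma~\ref{lem:isom}, and the chirped moments of $A_{\nu,\alpha}$ are the ordinary moments of $a_\nu$.

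To pass the series through $\Mai$, I use that $\Mai$ is continuous on $\calS'$. It is the transpose of multiplication by the smooth unimodular chirp, which is a continuous map $\calS\to\calS$ because all derivatives of the chirp have polynomial growth. Hence $f=\Mai g=\sum_\nu\lambda_\nu A_{\nu,\alpha}$ in $\calS'$, with the same coefficients. This gives the upper bound for the infimum.

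\emph{Converse implication.} Suppose $f=\sum_\nu\lambda_\nu A_{\nu,\alpha}$ in $\calS'$ with $\sum_\nu\abs{\lambda_\nu}^p<\infty$. Applying the continuous map $\Ma$ gives $g=\sum_\nu\lambda_\nu a_\nu$ in $\calS'$, where $a_\nu=\Ma A_{\nu,\alpha}$ are classical atoms by Proposition~\ref{prop:HardyTransport}(iii). The classical theorem then yields $g\in\Hp$ and $\norm{g}_{\Hp}\lesssim\paren{\sum_\nu\abs{\lambda_\nu}^p}^{1/p}$. Hence $f\in\Hpa$ with the same bound. Taking the infimum over representations gives the two-sided equivalence, with constants depending only on $n$, $p$ and $q$.

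The only nontrivial point, which I expect to be the main obstacle, is the interchange of $\Ma^{\pm1}$ with infinite sums in $\calS'$. I will handle it by checking explicitly that multiplication by $e^{\pm i\pi|x|^2\cot\alpha}$ is continuous on $\calS$. Each Schwartz seminorm of $e^{\pm i\pi|x|^2\cot\alpha}\phi$ is controlled by finitely many seminorms of $\phi$, since $\partial^\beta$ of the chirp equals a polynomial of degree at most $|\beta|$ times the chirp. Duality then gives weak-$*$ continuity on $\calS'$.

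A secondary point is that the atoms, being in $L^q$ with compact support, define tempered distributions. This is clear. I will also check that the moment condition in the FrFT atom definition uses the chirp $e^{+i\pi|x|^2\cot\alpha}$, matching $\Ma$, which is exactly what Proposition~\ref{prop:HardyTransport}(iii) records.
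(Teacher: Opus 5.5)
Your proposal is correct and follows the paper's proof. Both set $g=\Ma f$, use Proposition~\ref{prop:HardyTransport}\textup{(iii)} to identify FrFT $L^q$ atoms with classical $L^q$ atoms, and transport the classical atomic decomposition of $\Hp$ in both directions with the same coefficients. You also check explicitly that multiplication by the chirp is continuous on $\calS$, so $\Ma^{\pm1}$ are continuous on $\calS'$. That justifies applying $\Ma^{\pm1}$ termwise to the series, a step the paper only asserts.
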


\begin{proof}
By Proposition \ref{prop:HardyTransport}\textup{(iii)}, a function $A_{\nu,\alpha}$ is an FrFT $L^q$ atom if and only if $a_{\nu}:=\Ma A_{\nu,\alpha}$ is a classical $L^q$ atom. Hence $f\in\Hpa$ if and only if $g:=\Ma f\in\Hp$, and an atomic decomposition of $g$ by classical atoms is equivalent, after applying $\Mai$ termwise, to an atomic decomposition of $f$ by FrFT atoms with the same coefficients. The classical atomic decomposition theorem for $\Hp$ therefore transports  to the present FrFT setting and yields both the representation
$$
 f=\sum_{\nu}\lambda_{\nu}A_{\nu,\alpha}
 \qquad\text{in }\calS'(\R^n)
$$
and the quasi-norm equivalence
$$
\norm{f}_{\Hpa}
\asymp
\inf\paren{\sum_{\nu}\abs{\lambda_{\nu}}^p}^{1/p}.
$$
This proof is completed.
\end{proof}

\begin{corollary}\label{cor:bridge}
For $0<p\le 1$, define the FrFT Triebel-Lizorkin space
$$
\dot F^{0,2}_{p,\alpha}(\R^n)
:=\set{f\in\calS'(\R^n)/\Palpha:S_{\alpha}(f)\in L^p(\R^n)}
$$
with quasi-norm $\norm{f}_{\dot F^{0,2}_{p,\alpha}}:=\norm{S_{\alpha}(f)}_{L^p}$. Then
$$
\dot F^{0,2}_{p,\alpha}(\R^n)=\Hpa(\R^n)
$$
with equivalent quasi-norms.
\end{corollary}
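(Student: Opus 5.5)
The corollary should follow by chirp conjugation combined with Theorem~\ref{thm:HardySquare}, once the quotient by $\Palpha$ is handled correctly. Set $g:=\Ma f$. By Proposition~\ref{prop:HardyTransport}\textup{(i)} the square functions agree pointwise, $S_{\alpha}(f)=S(g)$, where $S$ is the classical Littlewood--Paley square function. Hence $f\in \dot F^{0,2}_{p,\alpha}$ exactly when $g\in \dot F^{0,2}_{p}$, and the two quasi-norms coincide.

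First we must check that the quotient is compatible with the map. If $Q_\alpha\in\Palpha$, then $\Ma Q_\alpha=P\in\calP$, and $\Delta_j P=0$ because $\widehat P$ is supported at the origin while $\varphi(2^{-j}\cdot)$ vanishes near $0$. So $\Delta_{j,\alpha}Q_\alpha=\Mai\Delta_j P=0$, and $S_\alpha(f+Q_\alpha)=S_\alpha(f)$. This shows that $S_\alpha$ is well defined on $\calS'/\Palpha$. It also shows that $\Ma$ induces a bijection $\calS'/\Palpha\to\calS'/\calP$, using Lemma~\ref{lem:conjprin}(1) together with Proposition~\ref{prop:HardyTransport}\textup{(ii)}.

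\textbf{Inclusion $\Hpa\subset\dot F^{0,2}_{p,\alpha}$.} Let $f\in\Hpa$ and take its class modulo $\Palpha$. Theorem~\ref{thm:HardySquare}\textup{(i)} gives $\norm{S_\alpha(f)}_{L^p}\asymp\norm{f}_{\Hpa}$, so $f\in\dot F^{0,2}_{p,\alpha}$ with comparable quasi-norm. This map is injective: if $f\in\Hpa\cap\Palpha$, then $\Ma f$ lies in $\Hp\cap\calP$, and this intersection is $\{0\}$ because nonzero polynomials are not in $\Hp$. The reason is that $\Hp\subset\calS'$ functionals have maximal functions in $L^p$, whereas a nonzero polynomial $P$ has $M(P)\gtrsim|P|$, which is not in $L^p$. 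Equivalently, $\norm{S(P)}_{L^p}=0$ would force $\norm{P}_{\Hp}=0$ by the classical equivalence.

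\textbf{Reverse inclusion $\dot F^{0,2}_{p,\alpha}\subset\Hpa$.} Suppose $S_\alpha(f)\in L^p$. Theorem~\ref{thm:HardySquare}\textup{(ii)} yields a unique $Q_\alpha\in\Palpha$ with $f-Q_\alpha\in\Hpa$ and $\norm{f-Q_\alpha}_{\Hpa}\asymp\norm{S_\alpha(f)}_{L^p}$. Thus the class $[f]$ has the distinguished representative $f-Q_\alpha\in\Hpa$. The uniqueness of $Q_\alpha$ makes the identification $[f]\mapsto f-Q_\alpha$ well defined and inverse to the first inclusion.

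The main obstacle is this bookkeeping of equivalence classes: the identification must really be a bijection of spaces, not merely an equivalence of norms. Both uniqueness statements carry the weight here, namely the uniqueness in Theorem~\ref{thm:HardySquare}\textup{(ii)} and the fact that $\Hpa\cap\Palpha=\{0\}$. Once these are in place, the quasi-norm equivalence follows with constants depending only on $n$, $p$, and the partition.
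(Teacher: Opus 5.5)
Your proposal is correct and follows the paper's route: the paper reads the corollary directly off Theorem~\ref{thm:HardySquare}, using part (i) for $\Hpa\subset\dot F^{0,2}_{p,\alpha}$ and part (ii), with uniqueness of $Q_\alpha$, for the converse. Your extra quotient bookkeeping is handled correctly and is exactly what the paper's one-line proof leaves implicit: $\Delta_{j,\alpha}$ annihilates $\Palpha$, so $S_\alpha$ is well defined on $\calS'/\Palpha$, and $\Hpa\cap\Palpha=\{0\}$, so $f\mapsto[f]$ is injective.
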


\begin{proof}
Theorem \ref{thm:HardySquare} shows precisely that $f$ belongs to $\Hpa$ if and only if its FrFT square function belongs to $L^p$, modulo the natural ambiguity by elements of $\Palpha$. This is the definition of $\dot F^{0,2}_{p,\alpha}$, and the corresponding quasi-norms are equivalent.
\end{proof}

\section{The validity boundary associated with twisted Kato--Ponce sharpness}

We now formalize the boundary statement used in the anomaly-weighting module. Consider
$$
G_{\omega}(u):=\omega\staralpha u.
$$

Assume the following classical sharpness fact is known: if the estimate
\begin{equation}\label{eq:classicalKP}
\norm{D^s(F*G)}_{L^r}
\le C\paren{\norm{D^sF}_{L^{p_1}}\norm{G}_{L^{p_2}}+\norm{F}_{L^{q_1}}\norm{D^sG}_{L^{q_2}}}
\end{equation}
holds uniformly for all Schwartz functions $F,G$, where
$$
\frac1r=\frac1{p_1}+\frac1{p_2}=\frac1{q_1}+\frac1{q_2}
\quad\text{and}\quad \frac12<r<\infty.
$$
Then, it is necessarily to ensure that
$$
 s>\max\paren{0,\frac nr-n}
 \qquad\text{or}\qquad
 s\in 2\Z_{+}.
$$
Equivalently, the estimate fails in the forbidden region
$$
 s<0,
 \qquad\text{or}\qquad
 0\le s\le \max\paren{\frac nr-n,0}
 \text{ with }
 s\notin 2\Z_{+}\cup\{0\}.
$$

\begin{theorem}[\textbf{FrFT necessary structural condition}]\label{thm:KPsharp}
Assume that for all $\omega$, $u$ one has the uniform FrFT estimate
\begin{equation}\label{eq:frftKP}
\norm{D^s_{\alpha}G_{\omega}(u)}_{L^r}
\le C\paren{\norm{D^s_{\alpha}\omega}_{L^{p_1}}\norm{u}_{L^{p_2}}+\norm{\omega}_{L^{q_1}}\norm{D^s_{\alpha}u}_{L^{q_2}}}.
\end{equation}
Then the same necessary restriction on $s$ must hold:
$$
 s>\max\paren{0,\frac nr-n}
 \qquad\text{or}\qquad
 s\in 2\Z_{+}.
$$
Consequently, \eqref{eq:frftKP} cannot hold uniformly in the forbidden region
$$
 s<0,
 \qquad\text{or}\qquad
 0\le s\le \max\paren{\frac nr-n,0}
 \text{ with }
 s\notin 2\Z_{+}\cup\{0\}.
$$
\end{theorem}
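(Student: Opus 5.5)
The plan is to reduce \eqref{eq:frftKP} to the classical estimate \eqref{eq:classicalKP} by one chirp conjugation, in the spirit of Lemma~\ref{lem:conjprin}, and then invoke the assumed classical sharpness fact. Given Schwartz functions $\omega,u$, set $F:=\Ma\omega$ and $G:=\Ma u$. By the definition of the chirpconvolution, $G_\omega(u)=\omega\staralpha u=\Mai(F*G)$. By the definition $D^s_\alpha=\Mai D^s\Ma$, we then get
$$
D^s_\alpha G_\omega(u)=\Mai D^s\Ma\Mai(F*G)=\Mai D^s(F*G).
$$
Similarly, $D^s_\alpha\omega=\Mai D^sF$ and $D^s_\alpha u=\Mai D^sG$.

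Next I will use Lemma~\ref{lem:isom}, the pointwise modulus invariance $|\Mai h|=|h|$, which holds for every $0<r\le\infty$, including the quasi-Banach range $\frac12<r<1$. It gives
$$
\norm{D^s_\alpha G_\omega(u)}_{L^r}=\norm{D^s(F*G)}_{L^r},\qquad \norm{D^s_\alpha\omega}_{L^{p_1}}=\norm{D^sF}_{L^{p_1}},
$$
together with $\norm{u}_{L^{p_2}}=\norm{G}_{L^{p_2}}$, $\norm{\omega}_{L^{q_1}}=\norm{F}_{L^{q_1}}$ and $\norm{D^s_\alpha u}_{L^{q_2}}=\norm{D^sG}_{L^{q_2}}$. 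Hence \eqref{eq:frftKP}, evaluated at $(\omega,u)$, is literally \eqref{eq:classicalKP} evaluated at $(F,G)$, with the same constant $C$.

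The key point is surjectivity. By Lemma~\ref{lem:conjprin}(1), $\Ma$ is a bijection of $\calS(\R^n)$ onto itself with inverse $\Mai$. So as $(\omega,u)$ ranges over all pairs of Schwartz functions, $(F,G)=(\Ma\omega,\Ma u)=(\Ma\omega,\Ma u)$ also ranges over all pairs of Schwartz functions. In particular, every classical pair $(F,G)$ arises from $(\omega,u)=(\Mai F,\Mai G)$. Therefore, if \eqref{eq:frftKP} holds uniformly, then \eqref{eq:classicalKP} holds uniformly for all Schwartz $F,G$. The assumed classical sharpness fact then forces $s>\max(0,\frac nr-n)$ or $s\in2\Z_+$. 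The statement about the forbidden region is just the contrapositive.

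The main obstacle is not analytic but lies in matching the admissible classes on both sides. First, "for all $\omega,u$" must be read as all Schwartz functions, so that the correspondence with the classical hypothesis is exact. Second, the quantities must be well defined in the same generalized sense on both sides, for instance when $s<0$ or $D^s$ produces a distribution that is not in $L^r$. I would handle this by interpreting each norm as $+\infty$ whenever the classical one is, which is consistent because conjugation by $\Ma$ preserves $\calS'$. With this convention, a failure of uniformity on the classical side transfers verbatim to the FrFT side. If the classical sharpness fact is instead phrased for the product rather than the convolution, the same argument goes through with $\star_\alpha$ in place of $\staralpha$, since both are intertwined by $\Ma$ in the same way.
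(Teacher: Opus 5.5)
Your proposal is correct and follows essentially the same route as the paper: the paper starts from arbitrary Schwartz $F,G$ and sets $\omega:=M_\alpha^{-1}F$, $u:=M_\alpha^{-1}G$; it then uses $D^s_\alpha(\omega\star_\alpha u)=M_\alpha^{-1}D^s(F*G)$ and unimodularity to turn the FrFT estimate into the classical one with the same constant, which is your surjectivity argument read in the opposite direction. Your extra remarks are sensible refinements that the paper leaves implicit: the admissible class, the $+\infty$ convention for ill-defined norms, and the product-versus-convolution reading of $\star_\alpha$.
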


\begin{proof}
Assume that \eqref{eq:frftKP} holds uniformly. For arbitrary Schwartz functions $F,G$, set
$$
\omega:=\Mai F,
\qquad
u:=\Mai G.
$$
Then $\Ma\omega=F$ and $\Ma u=G$, so by definition of the FrFT convolution and derivative,
$$
G_{\omega}(u)=\omega\staralpha u=\Mai(F*G),
\qquad
D^s_{\alpha}G_{\omega}(u)=\Mai D^s(F*G).
$$
Taking $L^p$ norms and using the unimodularity of $\Mai$, we obtain
$$
\norm{D^s_{\alpha}G_{\omega}(u)}_{L^r}=\norm{D^s(F*G)}_{L^r},
\qquad
\norm{D^s_{\alpha}\omega}_{L^{p_1}}=\norm{D^sF}_{L^{p_1}},
\qquad
\norm{u}_{L^{p_2}}=\norm{G}_{L^{p_2}},
$$
with analogous identities for the $L^{q_1}$ and $L^{q_2}$ terms. Substituting these into \eqref{eq:frftKP} shows that the classical estimate \eqref{eq:classicalKP} would then hold for all Schwartz $F,G$ with the same constant $C$. The assumed classical sharpness theorem rules this out in the forbidden region, so the same necessary condition on $s$ must hold in the FrFT setting.
\end{proof}

\section{Fractional order shifting and limit laws}\label{sec:limits}

\begin{theorem}[\textbf{Fractional order shifting in Lipschitz spaces}]\label{thm:ordershift}
Let $\gamma>0$ and $0<\sigma<\gamma$.
\begin{enumerate}[label=\textnormal{(\roman*)}]
\item On the homogeneous scale,
$$
\norm{f}_{\dot\Lambda^{\gamma}}\asymp \norm{|D|^{\sigma}f}_{\dot\Lambda^{\gamma-\sigma}}.
$$
Equivalently, if $g=|D|^{\sigma}f$, then
$$
\norm{|D|^{-\sigma}g}_{\dot\Lambda^{\gamma}}\asymp \norm{g}_{\dot\Lambda^{\gamma-\sigma}}.
$$
\item On the inhomogeneous scale,
$$
\norm{f}_{\Lambda^{\gamma}}\asymp \norm{\langle D\rangle^{\sigma}f}_{\Lambda^{\gamma-\sigma}}.
$$
Equivalently, if $g=\langle D\rangle^{\sigma}f$, then
$$
\norm{\langle D\rangle^{-\sigma}g}_{\Lambda^{\gamma}}\asymp \norm{g}_{\Lambda^{\gamma-\sigma}}.
$$
\end{enumerate}
\end{theorem}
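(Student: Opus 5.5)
The plan is to work entirely on the Littlewood--Paley side, using the characterizations \eqref{eq:hom-lip} and \eqref{eq:inh-lip}. Both reduce the theorem to a uniform comparison of dyadic pieces:
$$
\|\Delta_j |D|^{\sigma} f\|_{L^\infty}\approx 2^{j\sigma}\|\Delta_j f\|_{L^\infty},
\qquad
\|\Delta_j \langle D\rangle^{\sigma} f\|_{L^\infty}\approx 2^{j\sigma}\|\Delta_j f\|_{L^\infty}\ (j\ge 0).
$$
In addition, for the inhomogeneous scale we need $\|S_0\langle D\rangle^{\sigma}f\|_{L^\infty}\approx\|S_0 f\|_{L^\infty}$, up to harmless neighbouring-block terms.

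Once these comparisons are in hand, the homogeneous statement follows by multiplying by $2^{j(\gamma-\sigma)}$ and taking suprema over $j\in\Z$. The inhomogeneous statement follows in the same way over $j\ge 0$, together with the low-frequency term. The hypothesis $0<\sigma<\gamma$ is used only to guarantee that $\gamma-\sigma>0$, so that \eqref{eq:hom-lip} and \eqref{eq:inh-lip} apply on both sides.

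For the dyadic comparison I would introduce a fattened cutoff $\tilde\varphi\in C_c^\infty$, supported in $\{1/4\le|\xi|\le 4\}$ and equal to $1$ on $\operatorname{supp}\varphi$. Then
$$
\Delta_j|D|^{\sigma}f=2^{j\sigma}\,(K_\sigma)_{2^{-j}}*\Delta_j f,
\qquad
K_\sigma:=\bigl(|\xi|^{\sigma}\tilde\varphi(\xi)\bigr)^\vee,
$$
where $(K)_{t}(x)=t^{-n}K(x/t)$. Since $|\xi|^{\sigma}\tilde\varphi$ is smooth and compactly supported away from the origin, $K_\sigma\in\calS$ and has a finite $L^1$ norm independent of $j$. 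Young's inequality then gives the upper bound. The lower bound comes from the same argument with $\sigma$ replaced by $-\sigma$, since $|\xi|^{-\sigma}\tilde\varphi$ is also smooth: writing $\Delta_j f=\Delta_j|D|^{-\sigma}|D|^{\sigma}f$ yields $2^{j\sigma}\|\Delta_j f\|_{L^\infty}\lesssim\|\Delta_j|D|^{\sigma}f\|_{L^\infty}$.

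For $\langle D\rangle^{\sigma}$ with $j\ge 1$, the rescaled symbol is $2^{-j\sigma}(1+4^j|\xi|^2)^{\sigma/2}\tilde\varphi(\xi)$, whose $C^N$ norms are bounded uniformly in $j\ge1$. Hence the kernels form a bounded family in $L^1$, and the same holds for the exponent $-\sigma$. The block $j=0$ and the term $S_0$ involve only finitely many fixed smooth compactly supported symbols, so they are handled by a single fixed kernel each. Because $\langle\xi\rangle^{\pm\sigma}$ is smooth at the origin, the same argument also gives $\|S_0\langle D\rangle^{\pm\sigma}g\|_{L^\infty}\lesssim\|\tilde S_0 g\|_{L^\infty}$, where $\tilde S_0$ is a slightly enlarged low-frequency cutoff. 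The enlarged cutoff is then controlled by $S_0$ plus $\Delta_0$ and $\Delta_1$.

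The main obstacle is the homogeneous case, where $f$ is determined only modulo polynomials. The identity $f=|D|^{-\sigma}|D|^{\sigma}f$ makes sense only in $\calS'/\calP$, and $|D|^{\sigma}$ acting on a polynomial is not defined classically. I would handle this by interpreting both sides of (i) in the realization where the norms are defined, namely through the dyadic pieces. Since $\Delta_j|D|^{\sigma}$ is a genuine Schwartz-kernel convolution for each $j$, all the identities above hold blockwise, and no global inversion of $|D|^{\sigma}$ is needed. The equivalent formulation in terms of $g=|D|^{\sigma}f$ is the same statement read in the other direction. Throughout, the constants depend only on $n$, $\sigma$, $\gamma$ and $\varphi$, and not on $j$.
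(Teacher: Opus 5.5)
Your proposal is correct and follows the same route as the paper. Both reduce the claims, via the Littlewood--Paley characterizations of $\dot\Lambda^{\gamma}$ and $\Lambda^{\gamma}$, to two-sided dyadic comparisons $\|\Delta_j|D|^{\sigma}f\|_{L^\infty}\approx 2^{j\sigma}\|\Delta_j f\|_{L^\infty}$ (and the analogue for $\langle D\rangle^{\sigma}$, $j\ge 0$). These come from rescaled kernels of $|\xi|^{\pm\sigma}\tilde\varphi$ and $(4^{-j}+|\xi|^2)^{\pm\sigma/2}\tilde\varphi$, whose $L^1$ norms are bounded uniformly in $j$. Your version also spells out the $S_0$ term and the modulo-polynomial bookkeeping that the paper's sketch leaves implicit; one small correction is that an enlarged cutoff equal to $1$ on $\{|\xi|\le 2\}$ may also need $\Delta_2$, which is harmless.
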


\begin{proof}
Use the classical Littlewood--Paley characterization of homogeneous and inhomogeneous Lipschitz norms. On each dyadic annulus, $|\xi|^{\sigma}$ and $(1+|\xi|^{2})^{\sigma/2}$ differ from the factors $2^{j\sigma}$ by smooth bounded multipliers with kernels of uniformly bounded $L^{1}$ norm. This gives the two-sided estimates on all dyadic pieces, and hence the norm equivalences.
\end{proof}

\begin{proposition}[\textbf{Bandwidth law for localized selectors}]\label{prop:bandlaw}
Let
$$
S^{\Phi}_{R,\alpha}f:=\Fa^{-1}(\Phi(\cdot/R)\,\Fa f).
$$
Then
$$
S^{\Phi}_{R,\alpha}=\Mai T_{\varphi_{\alpha,R}}\Ma,
\qquad
\varphi_{\alpha,R}(\xi)=\Phi\Big(\frac{\sin\alpha}{R}\,\xi\Big).
$$
If $\Phi$ is even or radial, then
$$
\varphi_{\alpha,R}(\xi)=\Phi\Big(\frac{\sa}{R}\,\xi\Big).
$$
If $\Phi$ is supported in $\{1/2\le |\xi|\le 2\}$, then the effective physical band is
$$
\frac{R}{2\sa}\le |\xi|\le \frac{2R}{\sa}.
$$
\end{proposition}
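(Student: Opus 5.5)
The plan is to reduce everything to the FrFT multiplier identity of Proposition~\ref{prop:frft-mult}, applied to the single symbol $m(\eta):=\Phi(\eta/R)$. Since $\Phi$ is bounded, $m$ is a bounded measurable function, and $S^{\Phi}_{R,\alpha}$ is by definition the FrFT multiplier $T^\alpha_m=\Fa^{-1}(m\,\Fa\,\cdot)$. Proposition~\ref{prop:frft-mult} then gives
$$
S^{\Phi}_{R,\alpha}=\Mai T_{m_\alpha}\Ma,\qquad m_\alpha(\xi)=m((\sin\alpha)\xi)=\Phi\Big(\frac{\sin\alpha}{R}\,\xi\Big),
$$
which is exactly the first claim with $\varphi_{\alpha,R}=m_\alpha$.

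To be careful, I would re-derive this in the spirit of that proof: substitute the factorization of Lemma~\ref{lem:factor} into $\Fa h=m\,\Fa f$ and cancel the common factor $A_{\alpha,n}e^{i\pi|u|^2\cot\alpha}$. Then change variables $u=(\sin\alpha)\xi$, noting that $\sin\alpha$ may be negative; this sign is precisely why the symbol carries $\sin\alpha$ and not $\sa$.

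For the second claim, write $\sin\alpha=\pm\sa$. If $\Phi$ is even, then $\Phi(-y)=\Phi(y)$. If $\Phi$ is radial, it depends only on $|y|$, so it is in particular even. In either case $\Phi\big(\tfrac{\sin\alpha}{R}\xi\big)=\Phi\big(\tfrac{\sa}{R}\xi\big)$ for every $\xi$. When $\sin\alpha>0$ there is nothing to prove, and when $\sin\alpha<0$ one uses evenness once.

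For the third claim, recall that $\Phi$ is supported in $\{1/2\le|\eta|\le2\}$. Then $\varphi_{\alpha,R}(\xi)\neq0$ forces $1/2\le \sa|\xi|/R\le 2$, since $|(\sin\alpha)\xi/R|=\sa|\xi|/R$. This inequality is equivalent to
$$
\frac{R}{2\sa}\le|\xi|\le\frac{2R}{\sa}.
$$
So the classical multiplier $T_{\varphi_{\alpha,R}}$, which acts on $\Ma f$, localizes to this annulus, consistent with $R_{\mathrm{eff}}=R/\sa$.

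None of these steps carries real difficulty. The only point needing care is keeping the sign of $\sin\alpha$ in the first identity rather than replacing it prematurely by $\sa$. The support statement needs no evenness, because it only involves $|\xi|$. I would also remark that "effective physical band" refers to the Fourier support of $T_{\varphi_{\alpha,R}}(\Ma f)$, since $\Mai$ does not change the modulus of that function.
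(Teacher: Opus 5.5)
Your proposal is correct and matches the paper's proof: apply Proposition~\ref{prop:frft-mult} to $m(\eta)=\Phi(\eta/R)$ and read off the rescaled symbol and its support. Your explicit handling of the sign of $\sin\alpha$ in the even/radial case fills in a step the paper leaves implicit; one small caution on your closing remark is that the annulus is the Fourier support of $T_{\varphi_{\alpha,R}}(\Ma f)$ only, not of $S^{\Phi}_{R,\alpha}f$, because multiplication by the chirp $\Mai$ preserves moduli but not Fourier support.
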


\begin{proof}
Apply Proposition~\ref{prop:frft-mult} to the symbol $m(\xi)=\Phi(\xi/R)$. The support statement is immediate from the rescaled symbol.
\end{proof}

\begin{theorem}[\textbf{Classical-limit law}]\label{thm:classicallimit}
Let $\alpha_{\ell}\to \pi/2+k\pi$ for some $k\in\Z$. Then we have
$$
\sa\to 1,
\qquad
\ka\to 0,
\qquad
D(\alpha_{\ell})\to 0.
$$
Moreover:
\begin{enumerate}[label=\textnormal{(\roman*)}]
\item for every $f\in\calS(\R^{n})$ and every fixed $j\in\Z$,
$$
\Delta_{j,\alpha_{\ell}}f\to \Delta_{j}f
\qquad\text{in }L^{p}(\R^{n}),\quad 1\le p<\infty;
$$
\item for every $f\in\calS(\R^{n})$,
$$
\mathcal S_{\alpha_{\ell}}(f)\to \mathcal S(f)
\qquad\text{in }L^{p}(\R^{n}),\quad 1<p<\infty;
$$
\item if $m\in L^{\infty}(\R^{n})$ is continuous and $T_{m}$ is bounded on $L^{p}(\R^{n})$, then for every $f\in\calS(\R^{n})$,
$$
T_{m,\alpha_{\ell}}f\to T_{m}f
\qquad\text{in }L^{p}(\R^{n});
$$
\item the same convergences hold in $\calS'(\R^{n})$.
\end{enumerate}
\end{theorem}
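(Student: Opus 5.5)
The plan is to reduce everything to the conjugation identities $\Delta_{j,\alpha}=\Mai\Delta_j\Ma$ and $T_{m,\alpha}=\Mai T_{m_\alpha}\Ma$ with $m_\alpha(\xi)=m((\sin\alpha)\xi)$ from Proposition~\ref{prop:frft-mult}. Then I would show that each of the three ingredients converges strongly on $L^p$: the chirp $\Ma$, its inverse $\Mai$, and the rescaling of the symbol.

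The descriptor limits are elementary from continuity of $\sin$ and $\cot$ at $\pi/2+k\pi$. The basic lemma is the following. For $h\in L^p$ with $1\le p<\infty$, we have $\|M_{\alpha_\ell}^{\pm1}h-h\|_{L^p}\to0$, by dominated convergence: the integrand $|e^{\pm i\pi|x|^2\kappa_{\alpha_\ell}}-1|^p|h|^p$ tends to $0$ pointwise and is dominated by $2^p|h|^p$. For Schwartz $f$ one even has the quantitative bound $\|M_{\alpha_\ell}f-f\|_{L^p}\le\pi|\kappa_{\alpha_\ell}|\,\||x|^2f\|_{L^p}$, from $|e^{i\theta}-1|\le|\theta|$. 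The same argument applied to derivatives shows that $M_{\alpha_\ell}f\to f$ in $\calS$.

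For (i), I would split
$$\Delta_{j,\alpha_\ell}f-\Delta_jf=M_{\alpha_\ell}^{-1}\Delta_j(M_{\alpha_\ell}f-f)+(M_{\alpha_\ell}^{-1}-I)\Delta_jf.$$
The first term is at most $\|\Delta_j\|_{L^p\to L^p}\|M_{\alpha_\ell}f-f\|_{L^p}$, using the isometry in Lemma~\ref{lem:isometry}. The second term tends to $0$ by the lemma applied to $h=\Delta_jf\in L^p$.

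For (ii), I would use the pointwise identity $\mathcal S_{\alpha}(f)=\mathcal S(\Ma f)$ from the proof of Theorem~\ref{thm:FrFTL-P}, together with the reverse triangle inequality in $\ell^2$, which gives $|\mathcal S(g)-\mathcal S(h)|\le\mathcal S(g-h)$. Hence
$$\|\mathcal S_{\alpha_\ell}(f)-\mathcal S(f)\|_{L^p}\le C_p\|M_{\alpha_\ell}f-f\|_{L^p}\to0$$
by the classical Littlewood--Paley bound.

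For (iii), I would write $s_\ell:=\sin\alpha_\ell$ and split
$$T_{m,\alpha_\ell}f-T_mf=M_{\alpha_\ell}^{-1}T_{m_{\alpha_\ell}}(M_{\alpha_\ell}f-f)+M_{\alpha_\ell}^{-1}(T_{m_{\alpha_\ell}}-T_m)f+(M_{\alpha_\ell}^{-1}-I)T_mf.$$
The key observation is that $T_{m(s\cdot)}=\delta_s^{-1}T_m\delta_s$, where $\delta_sg(x):=|s|^{-n/p}g(x/s)$ is an $L^p$ isometry. Therefore $\|T_{m_{\alpha_\ell}}\|_{L^p\to L^p}=\|T_m\|_{L^p\to L^p}$ uniformly in $\ell$, which controls the first term. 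The middle term equals $\delta_{s_\ell}^{-1}T_m(\delta_{s_\ell}f-f)+(\delta_{s_\ell}^{-1}-I)T_mf$. It tends to $0$ by strong continuity of dilations on $L^p$ for $p<\infty$, which holds by density of $C_c$. The last term is handled by the basic lemma.

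The main obstacle I expect is the sign of $\sin\alpha_\ell$. If $k$ is odd, then $s_\ell\to-1$ and $m_{\alpha_\ell}\to m(-\cdot)$, so the literal limit is $T_{m(-\cdot)}$. I would handle this by reading the statement with $\alpha_\ell\to\pi/2$ modulo $2\pi$, or by assuming $m$ is even. No such issue arises in (i) and (ii), because there the Littlewood--Paley symbols are radial.

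Part (iv) then follows for Schwartz inputs, since $L^p$ convergence implies convergence in $\calS'$.
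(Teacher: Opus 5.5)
Your proposal is correct. Part (i) is the paper's argument: the same two-term split, with dominated convergence for the chirp. For (iv), your reading for Schwartz inputs is what the paper's one-line ``by duality'' amounts to.

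For (ii) and (iii) you take a different and more careful route.

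\textbf{Part (ii).} The paper only invokes ``dominated convergence in the $\ell^2$ variable'' and exhibits no dominating sequence. Your pointwise bound $|\mathcal S(\Ma f)-\mathcal S(f)|\le \mathcal S(\Ma f-f)$, combined with the classical Littlewood--Paley estimate, closes the argument. It even yields the rate $C_p\pi|\kappa_{\alpha_\ell}|\,\||x|^2f\|_{L^p}$.

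\textbf{Part (iii).} The paper argues only that $m((\sin\alpha_\ell)\xi)\to m(\xi)$ pointwise with uniform bounds. Via Plancherel this gives $L^2$ convergence. For $p\neq 2$ one still needs a uniform $L^p$ multiplier bound and convergence on a dense class, which interpolation cannot supply at the endpoint exponent. Your dilation conjugation supplies both directly, and it does not even use the continuity of $m$.

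Your sign caveat is also correct, and it exposes an error in the paper's statement and proof. When $k$ is odd, $\sin\alpha_\ell\to -1$, so the claimed pointwise limit $m(\xi)$ is wrong and in fact $T_{m,\alpha_\ell}f\to T_{m(-\cdot)}f$. Thus (iii) holds as stated only for even $k$ or even $m$.

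One small slip: with $\delta_s g(x)=|s|^{-n/p}g(x/s)$ one computes $\delta_s^{-1}T_m\delta_s=T_{m(\cdot/s)}$. The identity you want is therefore $T_{m(s\cdot)}=\delta_s T_m\delta_s^{-1}$. Since both $\delta_s^{\pm1}$ are $L^p$ isometries, this changes nothing in your argument.

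If (iv) is instead meant for tempered-distribution inputs, this only makes sense for (i). You would transpose and use the convergence $M_{\alpha_\ell}^{\pm1}\phi\to\phi$ in $\calS$, which you already noted.
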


\begin{proof}
Since $\alpha_{\ell}\to\pi/2+k\pi$, we have $\cot\alpha_{\ell}\to0$ and $|\sin\alpha_{\ell}|\to1$. Hence the chirp factor in $\Ma$ converges pointwise to $1$, and dominated convergence gives $\Ma f\to f$ in every $L^{p}$ for Schwartz $f$. For fixed $j$,
$$
\Delta_{j,\alpha_{\ell}}f-\Delta_{j}f
=\Mai\Delta_{j}(\Ma f-f)+(\Mai-I)\Delta_{j}f,
$$
and both terms tend to zero in $L^{p}$. The square-function convergence follows by dominated convergence in the $\ell^{2}$ variable. The multiplier convergence follows from Proposition~\ref{prop:frft-mult} because $m((\sin\alpha_{\ell})\xi)\to m(\xi)$ pointwise and is uniformly bounded. Distributional convergence is obtained by duality.
\end{proof}

\begin{theorem}[\textbf{Singular-boundary law}]\label{thm:singular}
Let $\alpha_{\ell}\to k\pi$ for some $k\in\Z$. Then
$$
\sa\to 0,
\qquad
|\ka|\to \infty,
\qquad
D(\alpha_{\ell})\to \infty.
$$
Assume that $\Delta_{j}$ is a classical Littlewood--Paley block associated with a nonzero annular cutoff. Then, the following three statements are hold.
\begin{enumerate}[label=\textnormal{(\roman*)}]
\item for every fixed $\xi\ne0$, the rescaled symbol of $\Delta_{j,\alpha_{\ell}}$ tends to $0$ at $\xi$;
\item the effective passband of $\Delta_{j,\alpha_{\ell}}$ drifts to frequencies of size $2^{j}/\sa\to\infty$;
\item for every bounded continuous function $\Phi$ and every $f\in L^{2}(\R^{n})$,
$$
S^{\Phi}_{R,\alpha_{\ell}}f\to \Phi(0)f
\qquad\text{in }L^{2}(\R^{n}).
$$
In particular, if $\Phi(0)=0$, the selector collapses to $0$, whereas if $\Phi(0)=1$, it converges to the identity.
\end{enumerate}
\end{theorem}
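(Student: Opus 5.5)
The growth of $|\ka|$ and $D(\alpha_\ell)$ follows at once from $\sin\alpha_\ell\to0$ and $|\cos\alpha_\ell|\to1$. For \textnormal{(i)} and \textnormal{(ii)} I would use Proposition~\ref{prop:bandlaw}. It gives $\Delta_{j,\alpha_\ell}=\Mai T_{\varphi_\ell}\Ma$ with symbol $\varphi_\ell(\xi)=\varphi\bigl(2^{-j}(\sin\alpha_\ell)\xi\bigr)$. Fix $\xi\neq0$. Since $2^{-j}|\sin\alpha_\ell||\xi|<1/2$ for large $\ell$, and $\varphi$ vanishes near $0$, we get $\varphi_\ell(\xi)=0$ eventually, which is \textnormal{(i)}. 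The support computation in Proposition~\ref{prop:bandlaw} places the passband in $2^{j-1}/\sa\le|\xi|\le 2^{j+1}/\sa$, which is \textnormal{(ii)}. Both steps are pure bookkeeping.

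For \textnormal{(iii)} the natural plan is to write $g_\ell:=M_{\alpha_\ell}f$ and apply Plancherel:
$$
\bigl\|S^\Phi_{R,\alpha_\ell}f-\Phi(0)f\bigr\|_{L^2}
=\bigl\|\bigl(\Phi((\sin\alpha_\ell)\xi/R)-\Phi(0)\bigr)\widehat{g_\ell}(\xi)\bigr\|_{L^2_\xi}.
$$
One would then try dominated convergence, using that the symbol tends to $\Phi(0)$ pointwise and is bounded by $2\|\Phi\|_\infty$.

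This is where I expect the main obstacle, and on inspection it is fatal. The function $g_\ell$ depends on $\ell$, so there is no fixed dominating function. Worse, for $f\in\mathcal S$, stationary phase shows that $\widehat{g_\ell}$ concentrates where $\xi\approx\ka x$. On that set the argument of $\Phi$ is $(\sin\alpha_\ell)\ka x/R=(\cos\alpha_\ell)x/R\to\pm x/R$, which does not tend to $0$.

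A cleaner route confirms this. I would use the strong continuity of $\alpha\mapsto\Fa$ on $L^2$, which holds because $\Fa$ is a unitary metaplectic group. Since $\mathcal F_{k\pi}f(x)=f((-1)^kx)$, this gives
$$
S^\Phi_{R,\alpha_\ell}f\to\Phi\bigl((-1)^k\,\cdot/R\bigr)f\qquad\text{in }L^2 .
$$
The convergence is proved by writing $\mathcal F_{\alpha_\ell}^{-1}m\mathcal F_{\alpha_\ell}-\mathcal F_{k\pi}^{-1}m\mathcal F_{k\pi}$ as a telescoping sum and using $\|m\|_\infty$ together with unitarity. So the limit is the multiplication operator by $\Phi(\pm x/R)$, not $\Phi(0)$.

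Consequently, \textnormal{(iii)} as stated holds only when $\Phi$ is constant on the relevant range. For example, with $\Phi(0)=0$ and $\Phi\not\equiv0$, taking $f$ concentrated where $\Phi(x/R)\neq0$ gives a counterexample to the claimed collapse to $0$. I would therefore prove \textnormal{(i)} and \textnormal{(ii)} as above, and replace \textnormal{(iii)} by the corrected limit $S^\Phi_{R,\alpha_\ell}f\to\Phi((-1)^k\cdot/R)f$ via the strong-continuity argument. The proof would also record that the Plancherel/dominated-convergence approach cannot work, because the chirp transports frequency mass in step with the shrinking scale $\sa$.
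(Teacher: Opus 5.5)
Your treatment of the descriptor limits and of (i)--(ii) is the same as the paper's. Both read $\Delta_{j,\alpha_\ell}$ as the FrFT multiplier $\mathcal F_{\alpha_\ell}^{-1}\varphi(2^{-j}\cdot)\mathcal F_{\alpha_\ell}$, rewrite it through Proposition~\ref{prop:bandlaw} with symbol $\varphi(2^{-j}(\sin\alpha_\ell)\xi)$, and read off the support. State this reading explicitly: under the earlier definition $\Delta_{j,\alpha}:=\Mai\Delta_j\Ma$ there is no rescaling, and (i) would be false.

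For (iii), the paper's proof is exactly the argument you discard. After Proposition~\ref{prop:bandlaw} it notes that $\Phi((\sin\alpha_\ell)\xi/R)\to\Phi(0)$ pointwise with a uniform bound, and then invokes Plancherel, tacitly treating $\widehat{M_{\alpha_\ell}f}$ as fixed. Your diagnosis is correct. That function moves with $\ell$, and its mass concentrates near $\xi\approx(\cot\alpha_\ell)x$. There the argument of $\Phi$ is $\approx(\cos\alpha_\ell)x/R$, which does not shrink. So (iii) is false as stated, and your limit $\Phi((-1)^k\cdot/R)f$ is the correct one.

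An explicit check confirms this. Take $n=1$, $R=1$, $f(x)=\Phi(x)=e^{-\pi x^2}$ and $s=\sin\alpha$. Gaussian calculus gives
\[
S^{\Phi}_{1,\alpha}f(x)=(1+s^2-is\cos\alpha)^{-1/2}\exp\Bigl(-\pi x^2\,\frac{1+\cos^2\alpha+is\cos\alpha}{1+s^2-is\cos\alpha}\Bigr).
\]
At $\alpha=\pi/2$ this reduces to the classical $T_\Phi f$. As $\alpha\to0$ or $\alpha\to\pi$ it converges in $L^2$ to $e^{-2\pi x^2}=\Phi(x)f(x)$, not to $\Phi(0)f=e^{-\pi x^2}$.

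Two points will make your corrected argument airtight. First, the paper fixes $\Fa$ only up to the unimodular constant $c_{\alpha,n}$, so $\Fa$ is not automatically a group. However, $S^\Phi_{R,\alpha}=\Fa^{-1}(\Phi(\cdot/R)\Fa\,\cdot)$ does not depend on $c_{\alpha,n}$, so you may choose $c_{\alpha,n}$ to make $\Fa$ the standard Hermite-diagonal FrFT, $\mathcal F_\alpha h_\beta=e^{-i|\beta|\alpha}h_\beta$. Second, your telescoping needs strong continuity of both $\alpha\mapsto\mathcal F_\alpha$ and $\alpha\mapsto\mathcal F_\alpha^{-1}=\mathcal F_{-\alpha}$. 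Both follow from the Hermite expansion by dominated convergence on the coefficients, as does $\mathcal F_{k\pi}f=f((-1)^k\,\cdot)$.

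The same argument also shows $\Delta_{j,\alpha_\ell}f\to\varphi(2^{-j}(-1)^k\cdot)f$, which is nonzero for suitable $f$. So the pointwise vanishing of the rescaled symbol in (i), though true, says nothing about the operators themselves. This is worth recording next to (i)--(ii).
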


\begin{proof}
The first two claims follow from Proposition~\ref{prop:bandlaw}. For the third claim, Proposition~\ref{prop:bandlaw} gives
$$
S^{\Phi}_{R,\alpha_{\ell}}f=\Mai T_{\Phi(\sa\cdot/R)}\Ma f
$$
when $\Phi$ is even or radial, and the same formula with $\sin\alpha_{\ell}$ otherwise. Since the multiplier symbol converges pointwise to the constant $\Phi(0)$ and remains uniformly bounded, Plancherel's theorem yields the claimed $L^{2}$ convergence.
\end{proof}

\begin{corollary}[\textbf{Parameter regimes}]\label{cor:regimes}
Let $0<\delta_{1}<\delta_{2}$. The fractional parameter splits naturally into three regimes:
\begin{enumerate}[label=\textnormal{(\roman*)}]
\item the classical regime $D(\alpha)\le \delta_{1}$;
\item the effective fractional regime $\delta_{1}<D(\alpha)<\delta_{2}$ with $\sa$ bounded away from $0$;
\item the warning regime $D(\alpha)\ge\delta_{2}$ or $\sa$ below the prescribed tolerance.
\end{enumerate}
\end{corollary}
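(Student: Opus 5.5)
This corollary is essentially a classification statement, so the plan is to show that the three regimes are well defined and mutually exclusive, that they exhaust $\R\setminus\pi\Z$, and that each has the analytic meaning the labels suggest. I would invoke Theorem~\ref{thm:classicallimit} for the classical regime and Theorem~\ref{thm:singular} for the warning regime. Fix a tolerance $\tau\in(0,1)$ for $\sa$, so that regime (iii) reads $D(\alpha)\ge\delta_2$ or $\sa<\tau$. Regime (ii) is then explicitly the complement of (iii) intersected with $\{D(\alpha)>\delta_1\}$. Hence $\R\setminus\pi\Z$ is partitioned as $\{D\le\delta_1\}\cup\{\delta_1<D<\delta_2,\ \sa\ge\tau\}\cup(\text{rest})$.

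The one point to verify is that (i) and (iii) do not overlap, i.e. that $D(\alpha)\le\delta_1$ forces $\sa$ above the tolerance. This follows from $|1-\sa|\le D(\alpha)$, which gives $\sa\ge 1-\delta_1$. I would therefore require $\tau\le 1-\delta_1$ (assuming $\delta_1<1$), or else declare that (i) takes precedence. I expect this compatibility of the threshold $\tau$ with $\delta_1$ to be the only genuine obstacle, since the statement leaves the tolerance unspecified.

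Next I would justify the labels using the descriptors and the action law of Theorem~\ref{thm:action-law}, under which all operator bounds are exact and only $(\ka,\sa)$ matter.

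\textbf{Classical regime.} For $D(\alpha)\le\delta_1$ we have $|\ka|\le\delta_1$ and $|1-\sa|\le\delta_1$. Then $R_{\mathrm{eff}}=R/\sa$ lies within a factor of $(1-\delta_1)^{-1}$ of $R$, and by the continuity argument in the proof of Theorem~\ref{thm:classicallimit} the operators are close to their classical counterparts.

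\textbf{Effective fractional regime.} Here $\sa\ge\tau$, so $R_{\mathrm{eff}}\le R/\tau$. Consequently the constants $C_\alpha$ of Theorems~\ref{thm:frftmar} and~\ref{FMHC}, which depend on $\alpha$ only through $\sa$, stay uniformly bounded.

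\textbf{Warning regime.} This is where $\alpha$ approaches the singular set $\pi\Z$. There $\sa\to0$ and $D\to\infty$, and the passband drift together with the selector collapse of Theorem~\ref{thm:singular} occur.

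The writeup would conclude by noting that the same argument applies verbatim in the $\rho$-parametrization of Definition~\ref{def:descriptors}, since $D(\alpha(\rho))=\mathfrak D(\rho)$.
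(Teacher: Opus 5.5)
Your proposal is correct and follows the paper, whose entire proof is the one-line remark that the corollary restates Theorems~\ref{thm:classicallimit} and~\ref{thm:singular}; your disjointness check, which uses $\sa\ge 1-\delta_1$ on $\{D(\alpha)\le\delta_1\}$, is a sound addition that the paper does not make. As a minor aside, $D(\alpha)=1-\sa+\sqrt{1-\sa^2}/\sa$ is strictly decreasing in $\sa$ alone, so the regimes are consecutive $\sa$-intervals and no assumption $\delta_1<1$ is needed; also, the constants in Theorems~\ref{thm:frftmar} and~\ref{FMHC} can be taken independent of $\alpha$ by dilation invariance, so they do not actually distinguish regime (ii) from regime (iii).
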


\begin{proof}
This is merely a convenient restatement of Theorems~\ref{thm:classicallimit} and~\ref{thm:singular}.
\end{proof}

\end{document}